\title[$(1,1)$ forms with specified Lagrangian phase]{$(1,1)$ forms with specified Lagrangian phase: A priori estimates and algebraic obstructions.}
\author[T.C. Collins]{Tristan C. Collins}
  \email{tcollins@math.harvard.edu}
  \thanks{T.C.C is supported in part by NSF grant DMS-1506652.  A.J. is supported in part by NSF grant DMS-1204155.  S.-T. Y. is supported in part by NSF grant DMS-1308244}
\author[A. Jacob]{Adam Jacob}
  \email{ajacob@math.harvard.edu}
  \author[S.-T Yau]{Shing-Tung Yau}
  \email{yau@math.harvard.edu}
  \address{Department of Mathematics, Harvard University, 1 Oxford Street, Cambridge, MA 02138}
\theoremstyle{plain}
\newtheorem{thm}{Theorem}[section]
\newtheorem{prop}[thm]{Proposition}
\newtheorem{defn}[thm]{Definition}
\newtheorem{lem}[thm]{Lemma}
\newtheorem{cor}[thm]{Corollary}
\newtheorem{conj}[thm]{Conjecture}
\theoremstyle{definition}
\newtheorem{rk}[thm]{Remark}
\numberwithin{equation}{section}
\newcommand{\Tr}{\textrm{Tr}}
\newcommand{\del}{\partial}
\newcommand{\de}{\partial}
\newcommand{\la}{\lambda}
\newcommand{\dbar}{\overline{\del}}
\newcommand{\ddb}{\sqrt{-1}\del\dbar}
\newcommand{\ti}[1]{\tilde{#1}}
\renewcommand{\leq}{\leqslant}
\renewcommand{\geq}{\geqslant}
\renewcommand{\epsilon}{\varepsilon}
\renewcommand{\phi}{\varphi}
\begin{document}

\begin{abstract}
Let $(X,\alpha)$ be a K\"ahler manifold of dimension $n$, and let $[\omega] \in H^{1,1}(X,\mathbb{R})$.  We study the problem of specifying the Lagrangian phase of $\omega$ with respect to $\alpha$, which is described by the nonlinear elliptic equation
\[
\sum_{i=1}^{n} \arctan(\la_i)= h(x)
\]
where $\la_i$ are the eigenvalues of $\omega$ with respect to $\alpha$.  When $h(x)$ is a topological constant, this equation corresponds to the deformed Hermitian-Yang-Mills (dHYM) equation, and is related by Mirror Symmetry to the existence of special Lagrangian submanifolds of the mirror.  We introduce a notion of subsolution for this equation, and prove a priori $C^{2,\beta}$ estimates when $|h|>(n-2)\frac{\pi}{2}$ and a subsolution exists.  Using the method of continuity we show that the dHYM equation admits a smooth solution in the supercritical phase case, whenever a subsolution exists.  Finally, we discover some stability-type cohomological obstructions to the existence of solutions to the dHYM equation and we conjecture that when these obstructions vanish the dHYM equation admits a solution.  We confirm this conjecture for complex surfaces.

\end{abstract}
\maketitle

\section{Introduction}

Let $(X,\alpha)$ be a connected, compact K\"ahler manifold of complex dimension $n$, and let $\Omega\in H^{1,1}(X,\mathbb{R})$ be a cohomology class.  Motivated by Mirror Symmetry, the second and third authors introduced the following problem in \cite{JY};  does there exist a smooth, closed $(1,1)$ form $\omega$, such that $[\omega]= \Omega$, and
\begin{equation}\label{eq: form type}
{\rm Im}(\alpha+ \sqrt{-1}\omega)^n = \tan( \hat{\theta}) {\rm Re}(\alpha +\sqrt{-1}\omega)^n,
\end{equation}
where $\hat{\theta}$ is a topological constant determined by $[\alpha],\Omega$ ?  When $\Omega\in H^{1,1}(X,\mathbb{Z})$, this equation is known as the {\em deformed Hermitian-Yang-Mills equation}, and plays an important role in Mirror Symmetry.  Written in terms of the eigenvalues of the relative endomorphism $\Lambda^{j}_{k} = \alpha^{j\bar{\ell}}\omega_{k\bar{\ell}}$, equation~\eqref{eq: form type} can be written as \cite{JY}
\begin{equation}\label{eq: SLag type}
\Theta_{\alpha}(\omega):= \sum_{i=1}^{n} \arctan(\la_i) = \hat{\Theta}.
\end{equation}
Equation~\eqref{eq: SLag type} is the natural generalization to compact K\"ahler manifolds of the special Lagrangian equation with potential introduced by Harvey-Lawson \cite{HL} and since studied extensively; see, for instance, \cite{CNS, Sm, SmW, WY, WY1, MTW,Y1,Y} and the references therein.  The third author, with Leung and Zaslow \cite{LYZ}, showed that when $\Omega =c_1(L)$ for a holomorphic line bundle $L \rightarrow X$, and $X$ is a torus fibration over a torus, solutions of equation~\eqref{eq: SLag type} are related via the Fourier-Mukai transform to special Lagrangian sections of the dual torus fibration. In their paper \cite{JY}, the second and third authors initiated the study of~\eqref{eq: SLag type} over a compact K\"ahler manifold, and using a parabolic flow they proved the existence of solutions when $(X,\alpha)$ has positive bisectional curvature, and the initial data is sufficiently positive.  

The starting point for this work is the following simple observation;  suppose $\Omega = c_{1}(L)$ is K\"ahler, and look for hermitian metrics $h$ on $L$ whose curvature form satisfies~\eqref{eq: form type}.  We may also look for metrics $h$ on $L$, so that the curvature form of $h^{k}$ on $L^{\otimes k}$ satisfies equation~\eqref{eq: form type} with $c_{1}(L)$ replaced by $kc_{1}(L)$.  These two equations are {\em different}.  It is therefore natural to ask for the limiting equation when $k\rightarrow \infty$.  Multiplying both sides of~\eqref{eq: form type} by $k^{-n}$, and taking a limit as $k\rightarrow \infty$ one easily obtains that the limiting equation is
\begin{equation}\label{eq: J-eqn}
c\omega^{n} = n\omega^{n-1}\wedge \alpha,
\end{equation}
for $\omega \in c_{1}(L)$ with $c$ a topological constant.  Equation~\eqref{eq: J-eqn} is precisely the $J$-equation, discovered independently by Donaldson \cite{Don} and Chen \cite{Ch1, Ch2}.  Let us briefly recall some of the important analytic and algebraic facts about the $J$-equation to serve as motivation for our work.  Analytically, the solvability of the $J$-equation on general compact K\"ahler manifolds is well understood thanks to work of Song-Weinkove \cite{SW}, building on previous work of Weinkove \cite{W1,W2}.  Song-Weinkove show that the existence of a solution to the $J$-equation is equivalent to the existence of a K\"ahler metric $\chi\in[\omega]$ with
\begin{equation}\label{eq: J-eqn subsol}
c\chi^{n-1} - (n-1)\chi^{n-2}\wedge \alpha >0
\end{equation}
in the sense of $(n-1,n-1)$ forms.  Very recently, Sz\'ekelyhidi \cite{Szek} introduced a notion of subsolutions for a very general class of Hessian type equations on Hermitian manifolds, which encompasses~\eqref{eq: J-eqn subsol}, and showed that the existence of a subsolution implies a priori estimates to all orders.  

The primary goal in this work is to begin building an analytic and algebraic framework for studying the existence problem for solutions of equation~\eqref{eq: SLag type}.  As a first step, we study the specified Lagrangian phase equation;
\begin{equation}\label{eq: Spec Lag Ang}
\Theta_{\alpha}(\omega) :=\sum_{i}\arctan(\la_i) = h(x).
\end{equation}

Our first theorem is that, under the assumption of a subsolution, solutions of the specified Lagrangian phase equation with critical phase admit a priori estimates to all orders.

\begin{thm}\label{thm: est thm}
Fix $\omega_0\in\Omega$. Let $u:X \rightarrow \mathbb{R}$ be a smooth function such that $\sup_{X}u=0$ and $\Theta_{\alpha}(\omega_0+\ddb u) = h(x)$, where $h:X \rightarrow [(n-2)\frac{\pi}{2}+\epsilon_0, n\frac{\pi}{2})$.  Suppose there exists a $\mathcal{C}$-subsolution $\underline{u}:X\rightarrow \mathbb{R}$ in the sense of Definition~\ref{def: C-sub} (see also Lemma~\ref{lem: sub sol def}). Then for every $\beta \in (0,1)$ we have an estimate
\[
\|u\|_{C^{2,\beta}}  \leq C(X,\alpha, \beta, h, \epsilon_0, \omega_0, \underline{u}).
\]
\end{thm}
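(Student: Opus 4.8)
The plan is to follow the now-standard three-step structure for a priori estimates for fully nonlinear elliptic equations on Kähler manifolds: a $C^0$ estimate, then a $C^2$ estimate (reducing to a bound on $\Delta u$, equivalently on the eigenvalues $\la_i$), and finally a $C^{2,\beta}$ estimate via an Evans--Krylov type argument or, following Sz\'ekelyhidi, via the blow-up method of Trudinger exploiting concavity of the operator. The supercritical hypothesis $h > (n-2)\frac{\pi}{2} + \epsilon_0$ is what makes the relevant level set $\{\sum \arctan \la_i = h(x)\}$ convex in the $\la$-variables, so that $\Theta_\alpha$ is a concave function of the Hermitian matrix $\omega_{i\bar j}$; this concavity is the structural input underlying both the second-order estimate and the Evans--Krylov step, and the $\mathcal C$-subsolution $\underline u$ of Definition~\ref{def: C-sub} plays the role of the uniform ellipticity/properness input.

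First I would establish the $C^0$ bound. With $\sup_X u = 0$ fixed, one needs a lower bound $\inf_X u \geq -C$. Here I would adapt the Alexandrov--Bakelman--Pucci / Blocki-type argument as refined by Sz\'ekelyhidi: at a point where $u - \underline u$ attains its minimum, the subsolution condition forces the equation to be uniformly elliptic, and a barrier/ABP estimate against $\omega_0 + \ddb \underline u$ converts this into an $L^\infty$ bound. (Alternatively, since the phase is supercritical, one has $\omega_0 + \ddb u > 0$, i.e. $u$ is $\omega_0$-plurisubharmonic, and one can run a Moser iteration or invoke the uniform bound on the class of normalized $\omega_0$-psh functions, combined with the subsolution to control the constant.) Next, for the $C^2$ estimate, I would apply the maximum principle to a test quantity of the form $\log(\la_{\max}) + \phi(|\nabla u|^2) + \psi(u - \underline u)$ for suitably chosen auxiliary functions $\phi, \psi$, differentiate the equation twice, and use concavity to discard the troublesome third-order terms. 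The subsolution enters to produce a good negative term from $\psi$ that dominates the bad gradient and curvature terms; this is exactly the scheme in Sz\'ekelyhidi's work, and the point needing care is that the structural conditions he requires (the level sets being contained in a cone, and the asymptotic cone condition) are verified for $\Theta_\alpha$ precisely under $|h| > (n-2)\frac\pi2$. Combined with the $C^0$ bound and interpolation, this yields $\|u\|_{C^2} \leq C$, and in particular uniform ellipticity: the eigenvalues $\la_i$ lie in a fixed compact subset of $(0,\infty)$ away from $0$ and $\infty$... more precisely, $|\la_i| \leq C$ with the linearized operator uniformly elliptic.

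Finally, with $C^2$ bounds in hand the equation becomes uniformly elliptic with concave nonlinearity, so the Evans--Krylov theorem (in its complex/Kähler form, e.g. as in Siu or Tosatti--Wang--Weinkove--Yang, or again via Sz\'ekelyhidi's exposition) gives an interior $C^{2,\beta_0}$ estimate for some $\beta_0 \in (0,1)$ depending only on the data; a covering argument promotes this to a global bound, and Schauder estimates applied to the (now linear, $C^{\beta_0}$-coefficient) differentiated equation bootstrap to $C^{2,\beta}$ for every $\beta \in (0,1)$. I expect the main obstacle to be the second-order estimate: verifying that the concavity coming from the supercritical phase is strong enough to absorb all the error terms — in particular the terms involving the torsion/curvature of $\alpha$ and the first derivatives of $h(x)$ — and checking carefully that the $\mathcal C$-subsolution hypothesis supplies a uniform lower bound on how far the eigenvalue vector sits inside the relevant convex set, which is what quantifies the ellipticity constant throughout. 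The $C^0$ estimate is comparatively routine given the subsolution, and the Evans--Krylov/Schauder bootstrap is black-box once uniform ellipticity and concavity are secured.
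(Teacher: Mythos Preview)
Your high-level outline is right, but there is a genuine gap at the heart of the argument: you repeatedly assert that the supercritical phase hypothesis makes $\Theta_\alpha$ \emph{concave} as a function of the Hermitian matrix, and plan to use this concavity both to discard third-order terms in the $C^2$ estimate and to invoke Evans--Krylov directly. This is false. The function $\Theta(\la) = \sum_i \arctan \la_i$ is concave only when all $\la_i > 0$, which corresponds to the \emph{hypercritical} range $h \geq (n-1)\tfrac{\pi}{2}$; in the merely supercritical range $(n-2)\tfrac{\pi}{2} < h < (n-1)\tfrac{\pi}{2}$ the smallest eigenvalue $\la_n$ can be negative and concavity fails. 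What survives is only that the level sets $\{\Theta = \sigma\}$ are convex hypersurfaces (Lemma~\ref{lem: arithmetic}{\it (iv)}). The paper singles this out as the main difficulty.

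Concretely, this breaks your plan in two places. First, the $C^2$ estimate: the paper uses the test quantity $\tfrac{1}{2}\log(1+\la_1^2) + \phi(|\nabla u|^2) + \psi(u)$ (note the modified highest-order term, whose extra convexity is essential), and the bulk of the work is a delicate term-by-term analysis of the negative contributions that appear precisely because concavity fails and $\la_n < 0$ is allowed; one cannot simply ``use concavity to discard the troublesome third-order terms.'' The output is only $|\de\dbar u| \leq C(1+\sup_X |\nabla u|^2)$, and a separate blow-up argument (Section~5) is then required for the gradient bound --- interpolation against the $C^0$ bound does not close this. Second, for $C^{2,\beta}$: since the operator is not concave one cannot apply Evans--Krylov directly; instead the paper exploits convexity of the level sets to replace $\Theta$ by an auxiliary concave operator $F_0$ with the same $\sigma$-level set, extends it (via the trick of Wang and Tosatti--Wang--Weinkove--Yang) to a real uniformly elliptic concave operator on ${\rm Sym}(2n)$, and runs a Liouville-plus-blow-up argument.
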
 

Our notion of a subsolution is certainly necessary for the existence of a solution to~\eqref{eq: Spec Lag Ang}, and furthermore agrees with the notion of a $\mathcal{C}$-subsolution recently introduced by Sz\'ekelyhidi \cite{Szek}.  The Lagrangian phase equation~\eqref{eq: Spec Lag Ang} fails several of the structural conditions imposed in \cite{Szek}-- most seriously, in general, the operator we study fails to be concave.  The main difficulty in the proof of Theorem~\ref{thm: est thm} is the $C^2$ estimate which is rather delicate owing to the lack of concavity.  In the real case, a priori second order estimates for graphical solutions of the special Lagrangian equation with constant and critical phase  are proved by Wang-Yuan \cite{WY}.  By contrast, the complex setting studied here introduces several new negative terms into the estimate, which together with the non-constant phase, further complicate the analysis.

We apply these a priori estimates together with the method of continuity to prove an existence theorem for the deformed Hermitian-Yang-Mills equation

\begin{thm}\label{thm: existence thm}
Fix $\omega_0\in\Omega$ and suppose the topological constant $\hat{\Theta}$ satisfies the critical phase condition
\[
\hat{\Theta} >(n-2)\frac{\pi}{2}.
\]
Furthermore, suppose that there exists $\chi:=\omega_0+\sqrt{-1}\partial\bar\partial\underline{u} \in \Omega$ defining a subsolution  in the sense of Definition~\ref{def: C-sub} (see also Lemma~\ref{lem: sub sol def}).   Assume that either
\begin{itemize}
\item $\Theta_{\alpha}(\chi) >(n-2)\frac{\pi}{2}$, or
\item $\hat{\Theta} \geq ((n-2)+\frac{2}{n}) \frac{\pi}{2}$.
\end{itemize}
Then there exists a unique smooth $(1,1)$ form $\omega $ with $[\omega]=\Omega$ solving the deformed Hermitian-Yang-Mills equation
\[
\Theta_{\alpha}(\omega) = \hat{\Theta}.
\]
\end{thm}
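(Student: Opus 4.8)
The plan is to run the method of continuity on the family of equations $\Theta_\alpha(\omega_t) = (1-t)\Theta_\alpha(\chi) + t\hat\Theta$ for $t \in [0,1]$, starting from the subsolution $\chi = \omega_0 + \ddb \underline{u}$ at $t=0$ and aiming for the dHYM equation at $t=1$. Writing $\omega_t = \omega_0 + \ddb u_t$ and normalizing $\sup_X u_t = 0$, the set of $t$ for which a smooth solution exists is nonempty (it contains $0$). Openness follows from the implicit function theorem on appropriate H\"older spaces: the linearization of $\Theta_\alpha$ at a solution is $\sum_i \frac{1}{1+\lambda_i^2}(\cdots)$, a second-order linear elliptic operator with no zeroth-order term, so its kernel and cokernel on $C^{2,\beta}$ are spanned by the constants, and one perturbs the phase function by a constant to stay in the image; this is routine.

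The heart of the matter is closedness, which requires uniform a priori estimates along the continuity path, and this is where Theorem~\ref{thm: est thm} is meant to be applied. To invoke it I need two things for every $t$: first, that the prescribed phase $h_t := (1-t)\Theta_\alpha(\chi) + t\hat\Theta$ maps into $[(n-2)\tfrac{\pi}{2} + \epsilon_0, n\tfrac{\pi}{2})$ for a uniform $\epsilon_0 > 0$; second, that $\chi$ remains a $\mathcal{C}$-subsolution for the equation with phase $h_t$. The second point should follow from the defining inequality in Definition~\ref{def: C-sub} together with monotonicity of the relevant phase constraint in the level, so that a subsolution for the critical phase $\hat\Theta$ (or for $\Theta_\alpha(\chi)$) remains a subsolution for all intermediate levels; I would check this directly from the definition. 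The first point is exactly where the two alternative hypotheses enter. In the case $\Theta_\alpha(\chi) > (n-2)\tfrac{\pi}{2}$, both endpoints $\Theta_\alpha(\chi)$ and $\hat\Theta$ exceed $(n-2)\tfrac{\pi}{2}$, so the convex combination $h_t$ stays uniformly above it, and the condition $h_t < n\tfrac{\pi}{2}$ is automatic since $\hat\Theta < n\tfrac{\pi}{2}$ always. In the case $\hat\Theta \geq ((n-2) + \tfrac{2}{n})\tfrac{\pi}{2}$, I cannot assume $\Theta_\alpha(\chi)$ is large, but here I would first use a soft argument: the pointwise bound $\Theta_\alpha(\chi) \geq -n\tfrac{\pi}{2} + $ (something), combined with the fact that the \emph{average} of $\Theta_\alpha(\chi)$ over any path is constrained because $\hat\Theta$ is the topological average; more precisely, one shows that once $\hat\Theta \geq ((n-2)+\tfrac{2}{n})\tfrac{\pi}{2}$ a $\mathcal{C}$-subsolution automatically satisfies $\Theta_\alpha(\chi) > (n-2)\tfrac{\pi}{2}$ pointwise — this is presumably the content of a lemma preceding this theorem and reduces the second case to the first.

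With the hypotheses of Theorem~\ref{thm: est thm} verified uniformly in $t$, that theorem yields $\|u_t\|_{C^{2,\beta}} \leq C$ independent of $t$. Since the equation is uniformly elliptic on the region cut out by the $C^2$ bound (the eigenvalues $\lambda_i$ are bounded, hence $1 + \lambda_i^2$ is bounded above and below), and concavity holds in the supercritical range $h_t > (n-2)\tfrac{\pi}{2}$ where the level set $\{\sum \arctan \lambda_i = h_t\}$ bounds a convex set — so Evans--Krylov applies after the $C^{2,\beta}$ estimate is in hand, or one bootstraps directly from $C^{2,\beta}$ via Schauder — we obtain uniform $C^{k,\beta}$ bounds for all $k$. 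Hence the solution set is closed, so it is all of $[0,1]$, and at $t=1$ we get a smooth $\omega = \omega_0 + \ddb u$ with $\Theta_\alpha(\omega) = \hat\Theta$. Uniqueness follows from the maximum principle applied to the difference of two solutions: if $\Theta_\alpha(\omega_0 + \ddb u) = \Theta_\alpha(\omega_0 + \ddb v) = \hat\Theta$, then $w = u - v$ satisfies a linear elliptic equation $a^{i\bar j}\partial_i\partial_{\bar j} w = 0$ with no zeroth-order term (by the mean value theorem, since $\arctan$ is monotone the coefficient matrix is positive definite), so $w$ is constant. The main obstacle I anticipate is not any single step but the bookkeeping needed to guarantee that the subsolution condition and the phase range are preserved along the \emph{entire} path for a \emph{uniform} $\epsilon_0$ — in particular handling the second alternative hypothesis, which requires the a priori pointwise lower bound $\Theta_\alpha(\chi) > (n-2)\tfrac{\pi}{2}$ for $\mathcal{C}$-subsolutions in the strongly supercritical regime, a fact that must be extracted from the structure of Definition~\ref{def: C-sub}.
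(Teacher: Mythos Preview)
Your outline is correct in spirit, and your handling of the second alternative hypothesis (reducing it to the first via a pointwise lower bound on $\Theta_\alpha(\chi)$) matches the paper. However, there is a genuine gap precisely where you flag a worry about ``bookkeeping'': the additive constant $c_t$ that you correctly introduce for openness is then silently dropped in your closedness argument. The actual equation along the path is $\Theta_\alpha(\omega_t) = (1-t)\Theta_0 + t\hat\Theta + c_t$ with $\Theta_0 = \Theta_\alpha(\chi)$, and you must verify both the phase range and the subsolution condition for the right-hand side \emph{including} $c_t$. The maximum principle gives only $c_t \le t(\Theta_0(q)-\hat\Theta)$ at a maximum point $q$ of $u_t$, and since $\Theta_0$ in general lies both above and below $\hat\Theta$ on $X$, nothing prevents $c_t$ from being strictly positive. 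Your monotonicity observation shows $\chi$ is a subsolution for any level $\le \max\{\Theta_0(x),\hat\Theta\}$, but once $c_t>0$ exceeds the (finite) gap in the subsolution inequality, the condition $\sum_{i\ne j}\arctan(\mu_i) > (1-t)\Theta_0 + t\hat\Theta + c_t - \tfrac{\pi}{2}$ can fail. This is exactly why the paper says the naive continuity method does not work.

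The paper's remedy is a \emph{double} method of continuity. One first deforms from $\Theta_0$ to the regularized maximum $\Theta_1 := \widetilde{\max}_\delta\{\hat\Theta,\Theta_0\}$; since $\Theta_1 \ge \Theta_0$ pointwise, the maximum principle forces the auxiliary constant $b_t \le 0$, which immediately preserves the subsolution condition, while carefully chosen properties of $\Theta_1$ (in particular $\Theta_1 = \hat\Theta$ near the minimum of $\Theta_0$, and $\sup_X(\Theta_1-\Theta_0)$ realized there) keep the phase above $(n-2)\tfrac{\pi}{2}$. One then deforms from $\Theta_1$ down to the constant $\hat\Theta$; now $\Theta_1 \ge \hat\Theta$, a cohomological argument gives $c_t \le 0$ (again preserving the subsolution), and the comparison $c_t \ge b_1$ from the maximum principle keeps the phase in range. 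The essential idea you are missing is that by routing through a target dominating both endpoints, the constants along each leg acquire a definite sign, which is what makes the estimates close up.
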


We remark that in the statement of Theorem~\ref{thm: existence thm}, the appearance of two conditions is rather artificial.  In reality, we require only the first condition.  We have only added the second condition to emphasize that if $\chi \in \Omega$ is a subsolution to the deformed Hermitian-Yang-Mills equation with $\hat{\Theta} \geq [(n-2)+\frac{2}{n}] \frac{\pi}{2}$, then the first condition is automatically satisfied.  This result removes the hypercritical phase, non-negative sectional curvature, and large initial angle assumptions from \cite{JY}.  We expect that this result can be improved when the angle $\hat{\Theta} \in ((n-2)\frac{\pi}{2}, ((n-2)+\frac{2}{n}) \frac{\pi}{2})$, to remove the assumption that the subsolution has critical phase.  This expectation has been verified in dimension 2 \cite[Theorem 1.2]{JY}, and in dimension 3 \cite{P}, where it follows from work of Fang-Lai-Ma \cite{FLM}.

In the case of a domain in $\mathbb{C}^{n}$, we expect the natural extension of the subsolution condition considered here to be equivalent to the solvability of the boundary value problem, in analogy with the work of Guan-Li \cite{GL} on the inverse Hessian type equations.  In the real setting, the Dirichlet problem posed by Harvey-Lawson \cite{HL} was solved by Caffarelli-Niremberg-Spruck \cite{CNS} under some assumptions on the convexity of the boundary.  It is interesting to note the similarities between these convexity conditions and the subsolution condition in Lemma~\ref{lem: sub sol def}.  

Finally, we show that the existence of a subsolution imposes some cohomological restrictions on $X$.  In particular, we prove the following simple

\begin{prop}\label{prop: intro stable nec}
For every subvariety $V \subset X$, define 
\begin{equation}
\Theta_{V} := {\rm Arg} \int_{V} (\alpha+\sqrt{-1}\omega)^{\dim V}.
\end{equation}
If there exists a solution to the deformed Hermitian-Yang-Mills equation \eqref{eq: SLag type}, then for every proper subvariety $V\subset X$ we have
\[
\Theta_{V} > \Theta_{X} - (n-\dim V)\frac{\pi}{2}.
\]
\end{prop}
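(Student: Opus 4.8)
The plan is to reduce the statement to a pointwise lower bound for the Lagrangian phase of the restriction $\omega|_V$, obtained from the classical eigenvalue interlacing (Poincar\'e separation) theorem for Hermitian forms, and then to integrate, using that this bound forces the ``restricted central charge'' $\int_V(\alpha+\sqrt{-1}\omega)^{\dim V}$ into a convex cone of opening angle less than $\pi$.

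First I would record that, since $\omega$ solves $\Theta_\alpha(\omega)=\hat\Theta$ with $\hat\Theta$ constant, at each $x\in X$ one may choose coordinates with $\alpha=\sqrt{-1}\sum dz_i\wedge d\bar z_i$ and $\omega=\sqrt{-1}\sum\la_i\,dz_i\wedge d\bar z_i$, so that $(\alpha+\sqrt{-1}\omega)^n$ equals $\prod_{i=1}^n(1+\sqrt{-1}\la_i)$ times a non-negative real $(n,n)$-form. Since $\mathrm{Arg}\prod_i(1+\sqrt{-1}\la_i)=\sum_i\arctan\la_i=\hat\Theta$, we get $(\alpha+\sqrt{-1}\omega)^n=e^{\sqrt{-1}\hat\Theta}\cdot(\text{non-negative }(n,n)\text{-form})$, hence $\int_X(\alpha+\sqrt{-1}\omega)^n\in e^{\sqrt{-1}\hat\Theta}\,\mathbb R_{>0}$ and $\Theta_X=\hat\Theta$. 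Note $\hat\Theta=\sum_i\arctan\la_i<n\tfrac\pi2$, and in the supercritical range of interest (cf. Theorem~\ref{thm: existence thm}) we have $\hat\Theta>(n-2)\tfrac\pi2$, so that $n\tfrac\pi2-\hat\Theta<\pi$.

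Next, fix $V\subset X$ irreducible of dimension $p=\dim V$ and work on the smooth locus $V_{\mathrm{reg}}$. At $x\in V_{\mathrm{reg}}$, restrict the Hermitian forms $\alpha,\omega$ to $T_xV\subset T_xX$. Diagonalising $\omega$ with respect to $\alpha$ as above and choosing an $\alpha$-orthonormal basis of $T_xV$, the matrix of $\omega|_{T_xV}$ is $P^*\Lambda P$ with $\Lambda=\mathrm{diag}(\la_1,\dots,\la_n)$, $\la_1\ge\cdots\ge\la_n$, and $P^*P=I_p$. By the Poincar\'e separation theorem the eigenvalues $\mu_1\ge\cdots\ge\mu_p$ of $\omega|_{T_xV}$ satisfy $\mu_j\ge\la_{\,n-p+j}$ for all $j$. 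Since $\arctan$ is increasing,
\[
\phi(x):=\sum_{j=1}^{p}\arctan\mu_j\ \ge\ \sum_{j=1}^{p}\arctan\la_{\,n-p+j}\ =\ \hat\Theta-\sum_{i=1}^{n-p}\arctan\la_i\ >\ \hat\Theta-(n-p)\tfrac\pi2,
\]
using $\arctan\la_i<\tfrac\pi2$; and trivially $\phi(x)<p\tfrac\pi2$. As in the first step, $(\alpha+\sqrt{-1}\omega)^{p}\big|_{V_{\mathrm{reg}}}=e^{\sqrt{-1}\phi(x)}\,\rho(x)\,\mathrm{dV}$ for a strictly positive density $\rho$ and a non-negative $(p,p)$-form $\mathrm{dV}$ with $\int_{V_{\mathrm{reg}}}\mathrm{dV}=\int_V\alpha^p>0$.

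Finally I would integrate. By the two pointwise bounds, every $e^{\sqrt{-1}\phi(x)}$ lies on the open circular arc between the angles $\hat\Theta-(n-p)\tfrac\pi2$ and $p\tfrac\pi2$; this arc has length $n\tfrac\pi2-\hat\Theta<\pi$, so the vectors $e^{\sqrt{-1}\phi(x)}$ sit inside the open convex cone $\mathcal K$ of opening angle $<\pi$ bounded by the rays of argument $\hat\Theta-(n-p)\tfrac\pi2$ and $p\tfrac\pi2$. Since $V_{\mathrm{reg}}$ carries the full (positive, finite) mass of $\int_V\alpha^p$, the integral $\int_V(\alpha+\sqrt{-1}\omega)^{p}=\int_{V_{\mathrm{reg}}}e^{\sqrt{-1}\phi(x)}\rho(x)\,\mathrm{dV}$ is a nonzero, positively weighted average of points of $\mathcal K$ and hence lies in $\mathcal K$. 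In particular it is nonzero, its argument is unambiguously defined (the cone has angle $<\pi<2\pi$), and $\Theta_V=\mathrm{Arg}\int_V(\alpha+\sqrt{-1}\omega)^p>\hat\Theta-(n-p)\tfrac\pi2=\Theta_X-(n-\dim V)\tfrac\pi2$. For reducible $V$ one applies this to each irreducible component and uses that $\mathcal K$ is a convex cone. The eigenvalue interlacing and the restriction bookkeeping are routine; I expect the only genuine obstacle to be this last step, namely controlling the \emph{argument} of the integral from a pointwise bound on the phase of the integrand (a priori $\mathrm{Arg}$ is defined only modulo $2\pi$, and one must preclude ``winding'' of the integrand) --- and this is precisely what the inequality $n\tfrac\pi2-\hat\Theta<\pi$ provides.
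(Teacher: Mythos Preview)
Your argument is correct. The pointwise lower bound via Poincar\'e separation is valid, and your identification of the supercritical hypothesis $\hat\Theta>(n-2)\tfrac{\pi}{2}$ as precisely what makes the cone of opening $n\tfrac{\pi}{2}-\hat\Theta<\pi$ convex, so that positive integration preserves the argument bound, is exactly the point.

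The paper's route to the same pointwise inequality is different in its key lemma. Rather than restricting to $T_xV$ and invoking eigenvalue interlacing, the paper proves a characterisation (Lemma~\ref{lem: arg sub var}) valid for \emph{any} non-zero simple positive $(n-p,n-p)$ form $\beta$: it expands $(\alpha+\sqrt{-1}\chi)^p$ in eigencoordinates as a sum over $p$-subsets $J\subset\{1,\dots,n\}$ of the numbers $\prod_{j\in J}(1+\sqrt{-1}\mu_j)$, observes that the subsolution condition forces each such product to have argument $>\hat\Theta-(n-p)\tfrac{\pi}{2}$, and then writes the pairing with $\beta$ as a positive linear combination of these. Proposition~\ref{prop: stable nec} is then declared ``essentially a corollary'': a solution is trivially a subsolution, and integrating the pointwise inequality over $V$ (using the same convex-cone reasoning you give) yields the result. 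What the paper's approach buys is the if-and-only-if characterisation of subsolutions in terms of arbitrary test forms $\beta$, which feeds into the form-theoretic description of Proposition~\ref{prop: form type pos}; your interlacing argument is more direct and self-contained for the specific task of restricting to a subvariety, since it immediately produces the restricted eigenvalues rather than decomposing over all coordinate $p$-planes.
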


This condition is a close analog of the stability condition for the $J$-equation recently discovered by Lejmi-Sz\'ekelyhidi \cite{LS}, and we expect the obstruction in Proposition~\ref{prop: intro stable nec} to arise from a suitable adaptation of the K-stability framework, a problem we plan to address in future work.  In light of \cite[Conjecture 1]{LS}, and recent evidence for this conjecture by the first author and Sz\'ekelyhidi \cite{CS} and Lejmi-Sz\'ekelyhidi \cite{LS}, it does not seem irresponsible to pose

\begin{conj}\label{conj: Lag stab}
A solution of the deformed Hermitian-Yang-Mills equation~\eqref{eq: SLag type} exists if and only if for every proper subvariety $V\subset X$ we have
\[
\Theta_{V} > \Theta_{X} - (n-\dim V)\frac{\pi}{2},
\]
in the notation of Proposition~\ref{prop: intro stable nec}.
\end{conj}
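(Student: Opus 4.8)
Since the statement is a conjecture, what follows is a program rather than a proof. The necessity direction ``$\Rightarrow$'' is exactly Proposition~\ref{prop: intro stable nec}, so the content to be established is the converse: the inequalities $\Theta_V > \Theta_X - (n-\dim V)\frac{\pi}{2}$, ranging over all proper subvarieties $V\subset X$, should force a solution of~\eqref{eq: SLag type} to exist. The plan is to split this converse into an analytic step and an algebraic step. The analytic step --- passing from the existence of a $\mathcal{C}$-subsolution to the existence of an honest solution --- is already carried out in the supercritical range $\hat\Theta > (n-2)\frac{\pi}{2}$ by Theorem~\ref{thm: est thm} together with Theorem~\ref{thm: existence thm} (modulo the secondary, and we expect removable, hypothesis appearing there). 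The heart of the conjecture is therefore the algebraic step: that the displayed family of inequalities is \emph{equivalent} to the existence of a $\mathcal{C}$-subsolution in the sense of Definition~\ref{def: C-sub}.

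For that step the guiding model is the Nakai--Moishezon-type theorem of Collins--Sz\'ekelyhidi \cite{CS} for the $J$-equation, which characterizes solvability of~\eqref{eq: J-eqn subsol} by positivity of certain intersection numbers over all subvarieties. First one would reformulate the pointwise condition of Lemma~\ref{lem: sub sol def} as the solvability of an auxiliary continuity family $\Theta_\alpha(\omega_t) = h_t(x)$, in which $h_t$ is lowered from a value where a subsolution manifestly exists toward the true threshold, and show that the \emph{only} way this family fails to close up is through loss of the $C^0$ estimate, i.e.\ $\sup_X(u_t - \underline{u}_t)\to\infty$ along a sequence $t \to t_\ast$. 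When that happens one normalizes, extracts a nonzero closed positive $(1,1)$-current $T\in\Omega$ concentrated (in the sense of Lelong numbers) on a proper analytic set, and runs a Nakai--Moishezon-style decomposition to isolate an irreducible component $V$ carrying a definite proportion of the mass. It then remains to compute the ``phase budget'' along this degeneration and match it with $\Theta_X - (n-\dim V)\frac{\pi}{2}$, so that concentration on $V$ contradicts $\Theta_V > \Theta_X - (n-\dim V)\frac{\pi}{2}$. Heuristically one wants the inequality attached to $V$ to be precisely the integrated form of the pointwise bound $\sum_i\arctan(\la_i) > h(x) - (n-\dim V)\frac{\pi}{2}$ along metrics collapsing transversally to $V$, in the same way that the Lejmi--Sz\'ekelyhidi inequality \cite{LS} is the integrated form of~\eqref{eq: J-eqn subsol}.

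The analytic engine driving the continuity family is the \emph{a priori} $C^{2,\beta}$ estimate of Theorem~\ref{thm: est thm}, which itself presupposes a $\mathcal{C}$-subsolution along the path; one therefore proceeds incrementally, perturbing $h_t$ only slightly at each stage so that the subsolution persists, and the genuinely new analytic input is just openness of the continuity set and the interior estimate, both already available. The principal obstruction is the lack of concavity of $\Theta_\alpha$: it is what forces the critical-phase restriction in Theorem~\ref{thm: est thm}, it blocks any direct Evans--Krylov argument, and --- most relevant to the program above --- it clouds the regularity of the weak limit $T$ and hence the localization of the phase computation onto $V$. A second and independent difficulty is the subcritical regime $\hat\Theta \le (n-2)\frac{\pi}{2}$, which lies entirely outside Theorems~\ref{thm: est thm} and~\ref{thm: existence thm}; treating it would require a new second-order estimate --- presumably exploiting the partial convexity of the level set $\{\sum_i\arctan\la_i = h\}$ only in the relevant directions, as in Wang--Yuan \cite{WY} --- and it is not even clear that the subsolution notion used here remains sufficient there. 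All of these complications disappear when $n=2$: every proper subvariety is a point or a curve, the constant $\hat\Theta$ is automatically supercritical, and the conjecture follows by combining Proposition~\ref{prop: intro stable nec} with \cite[Theorem 1.2]{JY}. This is the case we can settle unconditionally, and it motivates the general picture.
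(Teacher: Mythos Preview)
Since the statement is a conjecture, the paper does not prove it either; the relevant content there is the necessity direction (Proposition~\ref{prop: intro stable nec}) and the verification in dimension~$2$ (Proposition~\ref{prop: conj dim 2}). Your general program---reduce to producing a $\mathcal{C}$-subsolution and then attack that via a Nakai--Moishezon-type argument in the spirit of \cite{CS,LS}---is reasonable and matches the paper's own motivation.

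The genuine gap is in your dimension-$2$ claim. You assert that the surface case ``follows by combining Proposition~\ref{prop: intro stable nec} with \cite[Theorem 1.2]{JY}'', but that combination yields only necessity together with the implication ``subsolution $\Rightarrow$ solution''. What remains is exactly the algebraic step you yourself flagged as the heart of the matter: why do the curve inequalities $\Theta_C > \Theta_X - \tfrac{\pi}{2}$ force a subsolution (or a solution) to exist? You have not supplied this, and it does not come for free. The paper's argument in Proposition~\ref{prop: conj dim 2} takes a different route that bypasses subsolutions entirely: it uses the observation from \cite{JY} that on a surface the dHYM equation is solvable if and only if the class $[\cot(\Theta_X)\alpha + \omega]$ is K\"ahler (via Yau's theorem \cite{Yau}), and then invokes the Demailly--P\u aun numerical characterization of the K\"ahler cone \cite{DP} to reduce K\"ahlerness to positivity of intersections with curves, which a short computation shows is equivalent to $\Theta_C > \Theta_X - \tfrac{\pi}{2}$. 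The Demailly--P\u aun theorem is the essential ingredient missing from your outline; without it (or a substitute), the sufficiency direction on surfaces is not established. As a minor point, $\hat\Theta$ is not automatically supercritical on a surface---one may have $\Theta_X \le 0$---but this is handled by the symmetry $\omega \mapsto -\omega$ and the trivial case $\Theta_X = 0$.
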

In Proposition~\ref{prop:  conj dim 2} we show that this conjecture holds in dimension 2.  Furthermore, we briefly discuss how the stability condition can be interpreted in terms of a central charge.  In future work we hope to understand how Conjecture~\ref{conj: Lag stab} fits into the Mirror Symmetry setting for special Lagrangians and the conjectural picture put forth by Thomas and the third author \cite{TY}, and Thomas \cite{T1,T}.  Finally, we remark that there has recently been considerable interest in the analogy between the problem of finding special Lagrangians in a Calabi-Yau, and that of finding K\"ahler-Einstein or constant scalar curvature K\"ahler metrics as outlined by Solomon \cite{Sol1, Sol2}, and studied in recent work of Rubinstein-Solomon \cite{RS}.

The layout of this paper is as follows; in Section 2 we briefly discuss some background material, mostly taken from earlier work of the second and third authors \cite{JY}.  In Section 3 we discuss the notion of a $\mathcal{C}$-subsolution, and extract the results from \cite{Szek} which we will need.  In Section 4 we prove an a priori $C^2$ estimate in terms of the gradient for solutions of the specified Lagrangian phase equation~\eqref{eq: Spec Lag Ang}.  This is the most difficult step in the proof of Theorem~\ref{thm: est thm}.  In Section 5 we use a blow-up argument to prove an a priori gradient bound for solutions of~\eqref{eq: Spec Lag Ang}, which implies a uniform $C^2$ estimates.  In Section 6 we discuss the $C^{2,\beta}$ estimates, which follow from the usual Evans-Krylov estimate by a blow-up argument and a reduction to the real case.  In Section 7 we take up the method of continuity and prove Theorem~\ref{thm: existence thm}.  This actually turns out to be slightly involved, as the natural method of continuity does not obviously preserve the critical phase condition, nor the existence of a subsolution.  Instead we adapt a trick of Sun \cite{WS}, and use a double method of conintuity.  The first continuity path is used to find a suitable starting point for the second method of continuity, whose ending point is the solution of the deformed Hermitian-Yang-Mills equation.  In Section 8 we further discuss the implications of the existence of a subsolution for the deformed Hermitian-Yang-Mills equation, and deduce some algebraic obstructions to the existence of $(1,1)$ forms with constant Lagrangian phase.  We prove Proposition~\ref{prop: intro stable nec}, and give some evidence for Conjecture~\ref{conj: Lag stab}.
\\

{\bf Acknowledgements:} The first author is grateful to Gabor Sz\'ekelyhidi for several helpful conversations.  The authors would like to thank Valentino Tosatti and Ben Weinkove for helpful comments.

\section{Background and Notation}

\label{background}

Let us briefly discuss our setup.  Fix a compact K\"ahler manifold $X$ with K\"ahler form $\alpha$, and assume the normalization $\int_{X}\alpha^{n}=n!$. Throughout this paper, unless otherwise noted, the covariant derivative $\nabla$ and all norms are computed with respect to $\alpha$. 

Fix a cohomology class $\Omega \in H^{1,1}(X,\mathbb{R})$. The deformed Hermitian-Yang-Mills equation seeks a $(1,1)$ form $\omega \in \Omega$ wth the property that the map
\[
X \ni x \longmapsto\frac{ (\alpha+\sqrt{-1}\omega)^{n}}{\alpha^{n}} \in \mathbb{C}
\]
has constant argument.  Here we view ${\rm Arg}$ as a map from $\mathbb{C} \rightarrow \mathbb{R}$ (rather than $S^1$) by specifying that the argument of the function $1 = \frac{\alpha^{n}}{\alpha^{n}}$ is zero.  If a solution of this equation exists, then we necessarily have
\[
{\rm Arg} \frac{(\alpha+\sqrt{-1}\omega)^{n}}{\alpha^{n}} = {\rm Arg}\int_{X}\frac{ (\alpha+\sqrt{-1}\omega)^{n}}{n!} =: \hat{\Theta},
\]
where $\hat\Theta$ is a topological constant.  As shown in \cite{JY}, this problem is equivalent to both equations~\eqref{eq: form type} and~\eqref{eq: SLag type}.  We will primarily deal with the later representation.  As we discussed in the introduction, it is also necessary to consider the specified Lagrangian phase equation for non-constant phase
\[
\Theta_{\alpha}(\omega) := \sum_{i=1}^{n} \arctan(\la_i) = h(x),
\]
where again $\la_i$ are the eigenvalues of $\alpha^{-1}\omega$.  

It is useful to introduce another Hermitian metric on $T^{1,0}(X)$, defined by the formula $\eta_{j\bar{k}} = \alpha_{j\bar{k}} + \omega_{j\bar{\ell}}\alpha^{p\bar{\ell}} \omega_{p\bar{k}}.$ Note this metric is never K\"ahler. With this definition, following \cite{JY} one can compute the variation of $\Theta_{\alpha}$ as
\begin{equation}\label{eq: derivtheta}
\delta\Theta_{\alpha}= \eta^{j\bar{k}}\alpha_{\ell\bar k} \delta(\alpha^{\ell\bar m}\omega_{j\bar m}).
\end{equation}
This computation has two important consequences. First, using the covariant derivative $\nabla$ with respect to $\alpha$, one sees that $d\Theta=\eta^{j\bar{k}}\nabla\omega_{j\bar k}$. Furthermore, since we consider variations of $\omega$ which fix $\alpha$, the linearization of the operator $\Theta_{\alpha}(\omega)$ is given by
\begin{equation}\label{eq: lin op}
\Delta_{\eta} = \eta^{j\bar{k}} \del_j\del_{\bar{k}}.
\end{equation}
It is easy to check that this operator becomes uniformly elliptic as soon as $|\omega|$ is bounded.   At a point $x_0$ in coordinates where $\alpha(x_0)$ is the identity and $\omega(x_0)$ is diagonal with entries $\la_i$, then the metric $\eta_{j\bar k}$ is diagonal with entries
\[
\eta_{i\bar{i}}(x_0) = (1+\la_i^2)\delta_{i\bar{i}}.
\]

We conclude this section by specifying the constant $\hat\Theta$. First, define the complex number
\begin{equation}
Z_{[\omega]}:=\int_X\frac{(\alpha+\sqrt{-1}\omega)^n}{n!},
\end{equation}
which only depends on $[\alpha]$ and $\Omega$. Again, by specifying ${\rm Arg}$ as a map from $\mathbb{C} \rightarrow \mathbb{R}$ (rather than $S^1$), so that the argument of the constant function $1$ is zero, we define $\hat\Theta$ to be Arg$(Z_{[\omega]})$. Following \cite{JY} and \cite{WY}, we say that an angle is ${\it supercritical}$ if it is larger than $(n-2)\frac\pi2$, and ${\it hypercritical}$ if it is larger than $(n-1)\frac\pi2$. For further discussion and background we refer to reader to \cite{JY}.

\section{Subsolutions and the $C^{0}$ estimate}

 In order to introduce the notion of subsolution for the Lagrangian phase equation, we first define the relevant cone in which our solutions takes values.  Let $\Gamma_n \subset \mathbb{R}^{n}$ denote the positive orthant.  Recall that $(X,\alpha)$ is a fixed K\"ahler manifold, and $\omega_0$ is a fixed (1,1) form.  In this paper we are interested in finding forms $\omega$ such that $[\omega] = [\omega_0]$, and
 \begin{equation}\label{eq: SLag met}
 \Theta_{\alpha}(\omega) := \sum_{\ell=1}^{n} \arctan(\la_\ell) = h(x)
 \end{equation}
where $\la_\ell$ are the eigenvalues of the hermitian endomorphism $\Lambda^{i}_{k}:=\alpha^{i\bar{j}}\omega_{k\bar{j}}$, and $h: X\rightarrow ((n-2)\frac{\pi}{2}, n\frac{\pi}{2})$ is a smooth function.  We call this the Lagrangian phase equation with {\em supercritical phase}. To lighten notation, let us define $\Theta: \mathbb{R}^{n} \rightarrow \mathbb{R}$ to be
\[
\Theta(x_1, \dots ,x_n) = \sum_{i=1}^{n} \arctan(x_i).
\]
Let $\Gamma \subset \mathbb{R}^{n}$ be the cone through the origin over the region
 \[
 \left\{ (x_1, \dots, x_n) :  \Theta(x) \geq (n-2)\frac{\pi}{2} \right\}.
 \]
$\Gamma$ is an open, symmetric cone with vertex at the origin containing $\Gamma_n$.  Additionally, for any $\sigma \in((n-2)\frac{\pi}{2}, n\frac{\pi}{2})$ we define
\begin{equation}
\Gamma^{\sigma} := \{\lambda \in \Gamma: \Theta(\lambda) >\sigma \}.
\end{equation}
Note that for any $\sigma$ such that $\Gamma^{\sigma}$ is not empty, the boundary $\del \Gamma^{\sigma}$ is a smooth hypersurface.  The geometric and arithmetic properties of the cone $\Gamma$, and the sets $\Gamma^{\sigma}$ will play a crucial role in the developments to follow.  

\begin{lem}\label{lem: arithmetic}
Suppose we have real numbers $\la_1 \geq \la_2 \geq \cdots \geq \la_n$ which satisfy $\Theta(\lambda) = \sigma$,
for $\sigma \geq (n-2) \frac{\pi}{2}$.   Then $(\la_1, \dots,\la_n)$ have the following arithmetic properties;
\begin{enumerate}
\item[{\it (i)}] $\la_1\geq \la_2 \geq \cdots \geq \la_{n-1} >0$ and $\la_{n-1} \geq |\la_n|$.
\item[{\it (ii)}] $\la_1 +(n-1) \la_n \geq 0$.
\item[{\it (iii)}] $\sigma_k(\la_1,\dots,\la_n) \geq 0$ for all $1 \leq k \leq n-1$.
\end{enumerate}
Furthermore,
\begin{enumerate}
\item[{\it (iv)}] If $\Gamma^{\sigma}$ is not empty, the boundary $\del \Gamma^{\sigma}$ is a smooth, convex hypersurface. 
\end{enumerate}
In addition, if $\sigma \geq (n-2)\frac{\pi}{2} + \beta$, then;
\begin{enumerate}
\item[{\it (v)}] if $\lambda_n \leq 0$, then $\lambda_{n-1} \geq \epsilon_0(\beta)$.
\item[{\it (vi)}] $|\lambda_n| \leq C(\delta)$.
\end{enumerate}
\end{lem}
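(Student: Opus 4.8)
The plan is to organize the six assertions into three groups: $(i)$, $(ii)$, $(v)$, $(vi)$ are elementary and follow by ``telescoping'' the identity $\sum_i\arctan\la_i=\sigma$ together with $\arctan t<\tfrac{\pi}{2}$; $(iii)$ is the arithmetic heart and I would prove it by passing to the reciprocals $1/\la_i$ and applying Newton/Maclaurin type inequalities for elementary symmetric functions of positive reals; and $(iv)$ I would deduce from $(iii)$ by computing the restriction of $\nabla^2\Theta$ to the tangent space of $\del\Gamma^{\sigma}$.

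For the first group the driving observation is that, since every $\arctan\la_i<\tfrac{\pi}{2}$, any sum of $n-2$ of the quantities $\arctan\la_i$ is strictly below $(n-2)\tfrac{\pi}{2}$. Applied to $\la_1,\dots,\la_{n-2}$ this gives $\arctan\la_{n-1}+\arctan\la_n=\sigma-\sum_{i=1}^{n-2}\arctan\la_i>\sigma-(n-2)\tfrac{\pi}{2}\ge 0$, hence $\la_{n-1}>|\la_n|$ whenever $\la_n<0$; moreover if $\la_{n-1}\le 0$ then $\la_n\le 0$ and $\Theta(\la)<(n-2)\tfrac{\pi}{2}$, a contradiction, so $\la_{n-1}>0$. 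This is $(i)$. Isolating instead $\arctan\la_{n-1}$ one gets $\arctan\la_{n-1}>\beta$ when $\sigma\ge(n-2)\tfrac{\pi}{2}+\beta$, which is $(v)$ with $\epsilon_0(\beta)=\tan\beta$; isolating $\arctan\la_n$ gives $\arctan\la_n>\sigma-(n-1)\tfrac{\pi}{2}$, a lower bound on $\la_n$, and together with the trivial $n\arctan\la_n\le\sigma$ in the range $\la_n>0$ this yields $(vi)$, the constant depending on $\beta$ and on a lower bound for $n\tfrac{\pi}{2}-\sigma$. For $(ii)$ (only $\la_n=-a<0$ is non-trivial), from $\la_1\ge\la_i$ we get $(n-1)\arctan\la_1\ge\sigma+\arctan a\ge(n-2)\tfrac{\pi}{2}+\arctan a$; rewriting via $\arctan t=\tfrac{\pi}{2}-\arctan(1/t)$ and using subadditivity of $\arctan$ on $[0,\infty)$ (i.e. $(n-1)\arctan v\ge\arctan((n-1)v)$), one checks $(n-2)\tfrac{\pi}{2}+\arctan a\ge(n-1)\arctan((n-1)a)$, so $\arctan\la_1\ge\arctan((n-1)a)$, i.e. $\la_1+(n-1)\la_n\ge0$.

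For $(iii)$ I would first assume $\la_n=-a<0$, since otherwise the $\sigma_k$ are manifestly $\ge 0$. By $(i)$ the numbers $\mu_i:=1/\la_i$ $(1\le i\le n-1)$ are positive, and $\arctan\la_i=\tfrac{\pi}{2}-\arctan\mu_i$ converts $\Theta(\la)=\sigma\ge(n-2)\tfrac{\pi}{2}$ into $\sum_{i=1}^{n-1}\arctan\mu_i\le\tfrac{\pi}{2}-\arctan a=:C<\tfrac{\pi}{2}$. Since each $\arctan\mu_i\in[0,C]$ and $\tan$ is convex there, the maximum of $\sum_i\tan(\arctan\mu_i)$ subject to $\sum_i\arctan\mu_i\le C$ is attained at a vertex of the simplex, so $\sum_{i=1}^{n-1}\mu_i\le\tan C=1/a$. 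Now the identity $\sigma_\ell(1/\mu_1,\dots,1/\mu_{n-1})=\sigma_{(n-1)-\ell}(\mu)/\sigma_{n-1}(\mu)$ together with $\sigma_k(\la)=\sigma_k(\la_1,\dots,\la_{n-1})-a\,\sigma_{k-1}(\la_1,\dots,\la_{n-1})$ gives, for $1\le k\le n-1$ and $j:=n-1-k$,
\[
\sigma_k(\la)=\frac{\sigma_j(\mu)-a\,\sigma_{j+1}(\mu)}{\sigma_{n-1}(\mu)},
\]
and since all $\sigma_\ell(\mu)>0$ it suffices to show $a\,\sigma_{j+1}(\mu)\le\sigma_j(\mu)$. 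But a standard consequence of Newton's inequalities for positive reals is $\sigma_{j+1}(\mu)/\sigma_j(\mu)\le\sigma_1(\mu)=\sum_i\mu_i\le 1/a$, which is exactly what is needed.

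Finally for $(iv)$: $\del\Gamma^{\sigma}=\{\Theta=\sigma\}$ is smooth because $d\Theta=\big((1+\la_i^2)^{-1}\big)_i$ never vanishes, and convexity of $\Gamma^{\sigma}$ is equivalent to $\nabla^2\Theta=\mathrm{diag}\big(-2\la_i(1+\la_i^2)^{-2}\big)$ being negative semidefinite on the tangent space $\{\xi:\sum_i\xi_i(1+\la_i^2)^{-1}=0\}$; the substitution $\zeta_i=\xi_i(1+\la_i^2)^{-1}$ turns this into the statement that $\sum_i\la_i\zeta_i^2\ge0$ whenever $\sum_i\zeta_i=0$. By $(i)$ only $\la_n$ may be non-positive: if $\la_n\ge0$ this is immediate, and if $\la_n<0$ the diagonal-plus-rank-one form is nonnegative on that hyperplane exactly when $\sum_{i=1}^{n-1}1/\la_i\le1/|\la_n|$, equivalently $\sigma_{n-1}(\la)\ge0$, which holds by $(iii)$. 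Thus $(iv)$ reduces to $(iii)$. I expect $(iii)$ to be the only genuinely substantive step — extracting the sharp bound $\sum_{i\le n-1}1/\la_i\le 1/|\la_n|$ from the supercritical hypothesis, for which the reciprocal substitution together with the convexity/vertex argument (equivalently a Maclaurin-type inequality) is the key device; everything else is either routine telescoping with $\arctan$ or a direct consequence of $(iii)$.
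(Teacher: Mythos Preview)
Your argument is correct in all parts. The paper itself does not give a proof at all: it simply cites Wang--Yuan for $(i)$--$(iii)$, cites Yuan for $(iv)$, and declares $(v)$ and $(vi)$ ``trivial.'' So your self-contained treatment is genuinely different from what appears here.

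A few remarks on the comparison. Your handling of the elementary parts $(i)$, $(ii)$, $(v)$, $(vi)$ is exactly what one would expect and is presumably close to what Wang--Yuan do; in particular your observation that the constant in $(vi)$ must also depend on an upper bound for $\sigma$ (equivalently a lower bound for $n\tfrac{\pi}{2}-\sigma$) is more careful than the paper's statement, which has a stray ``$\delta$'' and suppresses this dependence --- in the applications $h$ is a fixed smooth function on a compact manifold, so $\sup h<n\tfrac{\pi}{2}$ is available and the issue never bites. For $(iii)$, your reciprocal substitution $\mu_i=1/\la_i$, turning the phase hypothesis into $\sum_{i<n}\arctan\mu_i\le\arctan(1/a)$, and then using superadditivity of $\tan$ on $[0,\tfrac{\pi}{2})$ to get $\sum_{i<n}\mu_i\le 1/a$, is clean and gives precisely the sharp inequality $\sigma_{n-1}(\la)\ge 0$ that drives everything; the step $\sigma_{j+1}(\mu)/\sigma_j(\mu)\le\sigma_1(\mu)$ then follows even without the full strength of Newton's inequalities (the elementary identity $(j+1)\sigma_{j+1}=\sum_\ell\mu_\ell\,\sigma_j(\hat\mu_\ell)\le\sigma_1\sigma_j$ already suffices). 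For $(iv)$ your reduction of convexity of $\del\Gamma^\sigma$ to the single inequality $\sigma_{n-1}(\la)\ge 0$ via the second fundamental form is efficient; Yuan's original argument is in the same spirit. What your route buys is a fully self-contained proof with no external citations, at the cost of a page of elementary analysis; what the paper's route buys is brevity.
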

\begin{proof}
Statements {\it (i)-(iii)} are due to Wang-Yuan \cite[Lemma 2.1]{WY}.  Statement {\it(iv)} is Yuan \cite[Lemma 2.1]{Y}, while {\it (v)} and {\it (vi)} are trivial.
\end{proof}

In particular, it follows from part {\it (i)} of the above lemma that
\[
\Gamma \subset \{ (\la_1,\dots, \la_n) \in \mathbb{R}^{n} : \sum_{i} \la_i >0\}.
\]
We recall the definition of a $\mathcal{C}$-subsolution, due to Sz\'ekelyhidi \cite{Szek}.
 
\begin{defn}[\cite{Szek}, Definition 1]\label{def: C-sub}
Fix $\omega_0\in\Omega$. We say that a smooth function $\underbar{u}:X \rightarrow \mathbb{R}$ is a $\mathcal{C}$-subsolution of ~\eqref{eq: SLag met} if the following holds:  At each point $x\in X$ define the matrix $\Lambda^{i}_{j} := \alpha^{i\bar{k}}(\omega_0+\ddb \underbar{u})_{j\bar{k}}$.  Then we require that the set
\begin{equation} 
\left\{ \lambda' \in \Gamma : \sum_{\ell=1}^{n} \arctan(\la'_\ell) = h(x),\,\,\,\, {\rm and }\,\,\,\, \lambda' - \lambda(\Lambda(x)) \in \Gamma_n \right\}
\end{equation}
is bounded, where $\lambda(\Lambda(x))$ denotes the $n$-tuple of eigenvalues of $\Lambda(x)$.
\end{defn}

In the present setting we have the following explicit description of the $\mathcal{C}$-subsolutions. 

\begin{lem}\label{lem: sub sol def}
A smooth function $\underbar{u}:X \rightarrow \mathbb{R}$ is a $\mathcal{C}$-subsolution of ~\eqref{eq: SLag met} if and only if at each point $x \in X$, if $\mu_1, \dots, \mu_n$ denote the eigenvalues of the Hermitian endomorphism $\Lambda^{i}_{j} := \alpha^{i\bar{k}}(\omega_0+\ddb \underbar{u})_{j\bar{k}}$, then, for all $j=1, \dots,n$ we have
\begin{equation}
\label{subsolution1}
\sum_{\ell \ne j}  \arctan(\mu_\ell) > h(x) -\frac{\pi}{2}.
\end{equation}
\end{lem}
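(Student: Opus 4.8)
The plan is to unwind Definition~\ref{def: C-sub} by analyzing exactly which sequences on the level set $\{\Theta = h(x)\}$ can escape to infinity while remaining in the shifted orthant $\mu+\overline{\Gamma_n}$, where $\mu=(\mu_1,\dots,\mu_n)$ are the eigenvalues of $\Lambda(x)$. Fix $x$ and abbreviate $h=h(x)$ and
\[
S_\mu:=\left\{\lambda'\in\Gamma:\Theta(\lambda')=h,\ \lambda'_\ell\geq\mu_\ell\ \text{for all }\ell\right\},
\]
which is the set appearing in Definition~\ref{def: C-sub}. I must show $S_\mu$ is bounded if and only if $\sum_{\ell\neq j}\arctan(\mu_\ell)>h-\frac\pi2$ for every $j$.

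\emph{Step 1: asymptotics of $S_\mu$.} I would first show that $S_\mu$ is unbounded precisely when there exists a nonempty $J\subsetneq\{1,\dots,n\}$ with $\sum_{\ell\notin J}\arctan(\mu_\ell)+|J|\frac\pi2\leq h$. For the forward direction, take $\lambda'^{(k)}\in S_\mu$ with $|\lambda'^{(k)}|\to\infty$; since each coordinate is bounded below by the fixed number $\mu_\ell$, a subsequence has every coordinate either converging in $\mathbb R$ or tending to $+\infty$. Let $J$ be the (nonempty) set of coordinates tending to $+\infty$; it is proper because $h<n\frac\pi2$. Writing $a_\ell=\lim_k\lambda'^{(k)}_\ell\in[\mu_\ell,\infty)$ for $\ell\notin J$ and passing to the limit in $\Theta(\lambda'^{(k)})=h$ gives $|J|\frac\pi2+\sum_{\ell\notin J}\arctan(a_\ell)=h$, whence $\sum_{\ell\notin J}\arctan(\mu_\ell)+|J|\frac\pi2\leq h$. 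Conversely, given such a $J$, I build an escaping sequence directly: for small $\delta_k\downarrow 0$ choose finite $a^{(k)}_\ell\geq\mu_\ell$ ($\ell\notin J$) with $\sum_{\ell\notin J}\arctan(a^{(k)}_\ell)=h-|J|\frac\pi2+\delta_k$ (possible because the right side lies in $\big[\sum_{\ell\notin J}\arctan(\mu_\ell),\,(n-|J|)\frac\pi2\big)$ for $\delta_k$ small), and set all $J$-coordinates equal to $\tan(\frac\pi2-\delta_k/|J|)\to\infty$; the resulting point lies in $\Gamma$ since $\Theta=h>(n-2)\frac\pi2$, has all coordinates $\geq\mu_\ell$ for $k$ large, and escapes to infinity. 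Lemma~\ref{lem: arithmetic} is available here to streamline the verifications that $\{\Theta=h\}\subset\Gamma$ and that the level set is well behaved.

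\emph{Step 2: the singleton conditions are the binding ones.} Negating Step 1, $S_\mu$ is bounded iff $\sum_{\ell\notin J}\arctan(\mu_\ell)+|J|\frac\pi2>h$ for every nonempty proper $J$. I then claim this whole family is equivalent to just its $|J|=1$ members, namely $\sum_{\ell\neq j}\arctan(\mu_\ell)+\frac\pi2>h$ for all $j$, which is exactly \eqref{subsolution1}. One implication is trivial; for the other, fix a nonempty $J$ with $|J|=m$, pick $j\in J$, and write
\[
\sum_{\ell\notin J}\arctan(\mu_\ell)+m\tfrac\pi2=\Big(\sum_{\ell\neq j}\arctan(\mu_\ell)+\tfrac\pi2\Big)+\Big((m-1)\tfrac\pi2-\!\!\sum_{\ell\in J\setminus\{j\}}\!\!\arctan(\mu_\ell)\Big).
\]
The first bracket exceeds $h$ by the singleton hypothesis applied to $j$, and the second bracket is nonnegative because each of its $m-1$ terms $\arctan(\mu_\ell)$ is strictly less than $\frac\pi2$; adding gives the desired inequality.

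\emph{Main obstacle.} All the content is in Step 1, and the delicate point is the boundary behaviour: since $\arctan$ never attains $\frac\pi2$, a configuration with $\sum_{\ell\neq j}\arctan(\mu_\ell)=h-\frac\pi2$ contains no actual point of $S_\mu$ having $\lambda'_j=\infty$, yet $S_\mu$ is still unbounded (one coordinate runs to $+\infty$ while the rest approach $\mu_\ell$). Getting the strict-versus-nonstrict inequalities exactly right in the escaping-sequence construction, and checking that the constructed points genuinely lie in $\Gamma\cap(\mu+\overline{\Gamma_n})$, is the one place requiring care; the rest is elementary.
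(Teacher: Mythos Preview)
Your argument is correct. The paper's proof follows the same underlying idea but is more economical: instead of first characterizing unboundedness of $S_\mu$ via arbitrary escape sets $J$ and then reducing to singletons (your Steps~1 and~2), it orders the $\lambda_i$ decreasingly and notes that boundedness of $S_\mu$ is equivalent to an upper bound on $\lambda_1$ alone, since every other coordinate is trapped in $[\mu_\ell,\lambda_1]$. One then bounds $\arctan\lambda_1 = h-\sum_{i>1}\arctan\lambda_i \le h-\sum_{i>1}\arctan\mu_i$, which is strictly less than $\tfrac\pi2$ exactly when \eqref{subsolution1} holds for the index of the smallest $\mu$; the reverse direction is declared ``similar.'' This sidesteps your Step~2 entirely. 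Your route is a bit longer but has the merit of making the full escape structure explicit and of handling both implications symmetrically; the paper's route is shorter by committing to the eigenvalue ordering at the outset.
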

\begin{proof}
We show that if $\underline{u}$ satisfies \eqref{subsolution1}, then it is a $\mathcal{C}$-subsolution. Fix a point $x_0 \in X$, and  suppose we have numbers $\la_1\geq \la_2 \geq \dots\geq \la_n$ such that 
\[
\sum_{i} \arctan \la_i = h(x_0).
\]
It suffices to show that if $\la_i \geq \mu_i$ for all $i$, then $\la_1 \leq C$.  Fix $\delta >0$ such that
\[
\sum_{\ell \ne j}  \arctan(\mu_\ell) > h(x_0) +\delta -\frac{\pi}{2},
\]
and suppose we can find an $n$-tuple $\la_1\geq \la_2 \geq \dots\geq \la_n$ as above such that $\arctan \la_1 \geq \pi/2 -\delta$.  Then we clearly have
\[
\sum_{i\ne 1} \arctan(\la_i) \leq h(x_0) +\delta - \frac{\pi}{2} < \sum_{i\ne 1} \arctan(\mu_i).
\]
Since $\arctan(\cdot)$ is monotone increasing, we must have that $\mu_j >\la_j$ for some $j$, but this is a contradiction to the assumption that $\la_i \geq \mu_i$ for all $i$. The proof of the reverse implication is similar.
\end{proof}

Throughout this paper we will be somewhat abusive in referring to the $(1,1)$ form $\chi := \omega_0 +\ddb \underline{u}$ as a subsolution.  We hope that no confusion will result.

\begin{rk}\label{rk: form type pos}
The condition in Lemma~\ref{lem: sub sol def} can be expressed in terms of the positivity of a certain $(n-1,n-1)$ form, which is similar in spirit to the subsolution condition discovered by Song-Weinkove \cite{SW} in the setting of the $J$-flow.  We will discuss this fact, as well as some consequences in section~\ref{sec: stability}; see Proposition~\ref{prop: form type pos} below.
\end{rk}
The following proposition is due to Sz\'ekelyhidi \cite{Szek}, refining previous work of Guan \cite{Guan}. This proposition play a fundamental role in proving the $C^2$ bound for our equation, which we demonstrate in the next section.  

\begin{prop}[\cite{Szek}, Proposition 6]\label{prop: szek prop 6}
Let $[a,b] \subset ((n-2) \frac{\pi}{2}, n \frac{\pi}{2})$ and $\delta, R>0$.  There exists $\kappa>0$, with the following property:  Suppose that $\sigma \in [a,b]$ and $B$ is a hermitian matrix such that
\[
(\la(B)-2\delta Id + \Gamma_n) \cap \del \Gamma^{\sigma} \subset B_{R}(0).
\]
Then for any hermitian matrix $A$ with $\la(A) \in \del \Gamma^{\sigma}$ and $|\la(A)| > R$ we either have
\[
\sum_{p,q} \eta^{p\bar{q}}(A)[B_{p\bar{q}} - A_{p\bar{q}}] > \kappa \sum_p \eta^{p\bar{p}}(A)
\]
or $\eta^{i\bar{i}}(A) > \kappa \sum_p \eta^{p\bar{p}}(A)$ for all $i$, where $\eta = Id +A^2$.
\end{prop}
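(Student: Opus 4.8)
The plan is to reduce the Proposition to an inequality among eigenvalues, using the convexity of the superlevel set $\overline{\Gamma^\sigma}$ (Lemma~\ref{lem: arithmetic}\,(iv)) as a surrogate for the concavity which the operator $\Theta$ lacks. Fix $\sigma,A,B$ as in the statement and pass to a unitary frame diagonalizing $A$, so that $A=\mathrm{diag}(\lambda_1,\dots,\lambda_n)$ with $\lambda:=\lambda(A)\in\partial\Gamma^\sigma$, $|\lambda|>R$, $\lambda_1\ge\cdots\ge\lambda_n$, and $\eta^{p\bar p}(A)=(1+\lambda_p^2)^{-1}$. Set $\Sigma:=\sum_p\eta^{p\bar p}(A)$ and $w_p:=\eta^{p\bar p}(A)/\Sigma$, so $\sum_p w_p=1$; by Lemma~\ref{lem: arithmetic}\,(i),(vi) the vector $w$ is (weakly) increasing in $p$ and $\Sigma\ge(1+C(a)^2)^{-1}>0$. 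In this frame the two alternatives read
\[
\sum_p w_p\,(B_{p\bar p}-\lambda_p)>\kappa\qquad\text{or}\qquad\min_p w_p>\kappa .
\]
Two standard facts will do most of the work. First, Schur--Horn: the diagonal $(B_{1\bar1},\dots,B_{n\bar n})$ is majorized by the decreasingly ordered eigenvalue vector $\mu=(\mu_1\ge\cdots\ge\mu_n)$ of $B$, so by the rearrangement inequality $\sum_p w_p B_{p\bar p}\ge\langle w,\mu\rangle$. Second, the convexity of $\overline{\Gamma^\sigma}$: its inward conormal at the boundary point $\lambda$ is proportional to $D\Theta(\lambda)=\Sigma\,w$, so $\lambda$ minimizes $\langle w,\cdot\rangle$ over $\overline{\Gamma^\sigma}$; in particular $\langle w,\lambda\rangle=\min_{\mu'\in\overline{\Gamma^\sigma}}\langle w,\mu'\rangle$. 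It therefore suffices to produce $\kappa>0$, depending only on $n,a,b,\delta,R$, with $\langle w,\mu\rangle-\langle w,\lambda\rangle>\kappa$ or $\min_p w_p>\kappa$.

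Next I would dispose of an easy case. Writing $\mathbf 1=(1,\dots,1)$, let $t_0\ge0$ be the unique number for which $\nu_0:=\mu-2\delta\mathbf 1+t_0\mathbf 1\in\partial\Gamma^\sigma$ (this exists because $\Theta(\mu-2\delta\mathbf 1+t\mathbf 1)$ increases to $n\tfrac{\pi}{2}$ as $t\to\infty$). Since $\nu_0$ and all of its coordinate permutations lie in $\overline{\Gamma^\sigma}$, minimality of $\langle w,\lambda\rangle$ together with $\langle w,\mathbf 1\rangle=1$ gives
\[
\langle w,\mu\rangle-\langle w,\lambda\rangle\ \ge\ 2\delta-t_0,
\]
so the first alternative holds with $\kappa=\delta$ whenever $t_0\le\delta$. (In particular, if $\mu-2\delta\mathbf 1\in\overline{\Gamma^\sigma}$ we are done regardless of $R$.) The hypothesis on $B$ is needed here only to record that $\nu_0$ lies in $(\lambda(B)-2\delta\,\mathrm{Id}+\Gamma_n)\cap\partial\Gamma^\sigma$, hence $|\nu_0|\le R$.

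The remaining case, $t_0>\delta$ together with $|\lambda|>R$, is the heart of the matter, and I would handle it by a compactness/contradiction argument. If the Proposition failed there would be sequences $\kappa_j\downarrow0$, $\sigma_j\in[a,b]$ and admissible pairs $A_j,B_j$ violating both alternatives; the probability vectors $w^{(j)}$ then converge to a $w^\infty$ with $\min_p w^\infty_p=0$, and, since $(1+C(a)^2)^{-1}\le\Sigma^{(j)}\le n$, the indices $p$ with $w^\infty_p=0$ are precisely those for which $\lambda_p(A_j)\to\infty$; call this set $S$. Along these escaping directions the supporting-hyperplane inequality for $\overline{\Gamma^{\sigma_j}}$ degenerates in the limit to a supporting-hyperplane inequality for the reduced region $\{\mu'':\sum_{p\notin S}\arctan\mu''_p\ge\sigma_\infty-|S|\tfrac{\pi}{2}\}$, which by $\sigma_\infty>(n-2)\tfrac{\pi}{2}$ and Lemma~\ref{lem: arithmetic} is again a convex, supercritical-phase region, while the $\mathcal C$-subsolution hypothesis for $B_j$ passes to the limit to keep $\mu-2\delta\mathbf 1$ strictly inside this reduced region in the coordinates outside $S$. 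Combining the limiting supporting-hyperplane inequality with the limiting Schur--Horn bound should then produce a gap $\langle w^\infty,\mu^\infty\rangle-\langle w^\infty,\lambda^\infty\rangle\ge 2\delta'>0$, contradicting the failure of the first alternative (whose left side tends to a nonpositive number). I expect the delicate point to be exactly this limit: one must check that the non-compactness created by eigenvalues of $A$ -- and possibly of $B$ -- running to infinity does not collapse the $2\delta$ gap, and that the reduced data remain non-degenerate uniformly for $\sigma\in[a,b]$; this is where the quantitative arithmetic of $\Gamma^\sigma$ in Lemma~\ref{lem: arithmetic}, especially the uniform bound $|\lambda_n|\le C$ and the convexity of $\partial\Gamma^\sigma$, is indispensable, along with a finite case analysis over the subsets $S\subseteq\{1,\dots,n\}$ of escaping directions. (One could also avoid compactness and run the same three steps quantitatively, as is done in \cite{Szek} refining \cite{Guan}.)
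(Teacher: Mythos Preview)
The paper's own ``proof'' is a one-line deferral: it observes that $\partial\Gamma^{\sigma}$ is convex by Lemma~\ref{lem: arithmetic}\,{\it(iv)} and then says the argument of \cite{Szek} applies verbatim. You have gone much further and attempted to reconstruct that argument, so the comparison is really between your sketch and Sz\'ekelyhidi's proof.

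Your framework is correct and is essentially the Sz\'ekelyhidi--Guan scheme: diagonalise $A$, use convexity of $\overline{\Gamma^{\sigma}}$ to interpret $w=\Sigma^{-1}D\Theta(\lambda)$ as the inward conormal of a supporting hyperplane, and then argue by compactness over the compact parameter set $[a,b]$. The Schur--Horn/rearrangement step you introduce to pass from the diagonal $(B_{p\bar p})$ to the ordered eigenvalues $\mu$ is a clean device and the monotonicity of $w$ that it requires does follow from Lemma~\ref{lem: arithmetic}\,{\it(i)}. Your ``easy case'' $t_0\le\delta$ is handled correctly.

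The gap is in the compactness step, and it is slightly more serious than you indicate. The Schur--Horn bound pairs the small weights $w_p$ (large $\lambda_p$) with the \emph{large} $\mu_p$ and the large weights with the \emph{small} $\mu_p$; if the small eigenvalues of $B$ are not controlled, the lower bound $\langle w,\mu\rangle-\langle w,\lambda\rangle$ can be very negative even when the true quantity $\sum_p w_p(B_{p\bar p}-\lambda_p)$ is not, so the limiting inequality you hope to contradict may simply be consistent. What rescues this is that the $B_R$-hypothesis \emph{does} bound the small eigenvalues of $B$ from below: along any asymptotic direction of $\partial\Gamma^{\sigma}$ (say $\lambda_j\to\infty$ for $j\in S$), the remaining coordinates tend to the boundary of the reduced region $\{\sum_{p\notin S}\arctan(\cdot)=\sigma-|S|\tfrac{\pi}{2}\}$, and for the intersection with $\mu-2\delta+\Gamma_n$ to be bounded these asymptotic values must eventually drop below $\mu_p-2\delta$. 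This forces $(\mu_p-2\delta)_{p\notin S}$ to lie in (indeed strictly inside, quantitatively in terms of $R$) the reduced region, which is exactly the input your limiting supporting-hyperplane inequality needs. You should make this step explicit rather than asserting that the subsolution hypothesis ``passes to the limit''; once it is written out, the contradiction closes as you describe.
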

\begin{proof}
Since $\sigma > (n-2) \frac{\pi}{2}$, Lemma~\ref{lem: arithmetic} part {\it(iv)} implies that  $\Gamma^{\sigma}$ is a convex hypersurface.  With this observation, the proof in \cite{Szek} goes through verbatim.
\end{proof}

The following estimate, based on the Alexandroff-Bakelman-Pucci maximum principle, is due to Sz\'ekelyhidi \cite{Szek}.  B\l ocki  \cite{Bl} first applied the ABP estimate to the complex Monge-Amp\`ere equation on K\"ahler manifolds following earlier suggestions by Cheng and the third author.  While the operator under consideration here does not have the structural properties imposed in \cite{Szek}, it is straightforward to check that the proof requires only the ellipticity of the operator, and hence applies verbatim here.

\begin{prop}[\cite{Szek}, Proposition 10]\label{prop: C0 est}
Suppose that $\Theta_{\alpha}(\omega_0 + \ddb u) = h(x)$, where $h: X \rightarrow [(n-2) \frac{\pi}{2}, n\frac{\pi}{2})$, and suppose that $\underbar{u}=0$ is a $\mathcal{C}$-subsolution.  Then there exists a constant $C$, depending only on the given data, including $\omega_0$, such that
\[
\text{osc}_{X} u \leq C.
\]
\end{prop}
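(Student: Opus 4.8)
The statement to prove is Proposition~\ref{prop: C0 est}, the $C^0$ (oscillation) estimate, which the excerpt attributes to Sz\'ekelyhidi \cite{Szek} and claims follows verbatim since the proof only uses ellipticity.

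\textbf{Plan for the proof.} The strategy is the Alexandroff--Bakelman--Pucci (ABP) maximum principle argument, following B\l ocki \cite{Bl} and Sz\'ekelyhidi \cite{Szek}. First I would normalize: let $u$ solve $\Theta_\alpha(\omega_0+\ddb u)=h(x)$, and after subtracting a constant assume $\sup_X u=0$; it then suffices to bound $\inf_X u$ from below, i.e. to bound $L:=-\inf_X u$ from above. Working in a fixed coordinate chart $B_1\subset \mathbb{C}^n = \mathbb{R}^{2n}$ (covering a ball of definite size around a point $x_0$ where $u(x_0)=\inf_X u$), I would consider the function $v = u - \underline{u}$ (with $\underline{u}=0$ here, but keeping the subsolution in the picture conceptually) and apply the ABP estimate to $v$ on $B_1$. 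Since $u$ attains a deep minimum at $x_0$ while $u\le 0$ everywhere, after adding a suitable quadratic $\epsilon|x-x_0|^2$ one arranges that the minimum of $v + \epsilon|x-x_0|^2$ over $\bar B_1$ is attained in the interior and is $\le -L + C$. The ABP inequality then gives
\[
c\, \epsilon^{2n} \le C \int_{P} \det(D^2(v+\epsilon|x|^2)),
\]
where $P\subset B_1$ is the lower contact set on which $D^2(v+\epsilon|x|^2)\ge 0$.

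\textbf{The key step: bounding the determinant on the contact set.} The heart of the argument is to show that on the contact set $P$ the real Hessian determinant $\det D^2 v$ is bounded above by a fixed constant. This is where the $\mathcal{C}$-subsolution hypothesis enters. On $P$ one has $D^2 v\ge -\epsilon\, \mathrm{Id}$ as a real symmetric $2n\times 2n$ matrix; in particular the complex Hessian $\partial\bar\partial v$ is bounded below, so $\Lambda(x) = \alpha^{-1}(\omega_0+\ddb u) \ge \alpha^{-1}(\omega_0 + \ddb \underline u) - C\epsilon\,\mathrm{Id}$, i.e. the eigenvalues $\lambda_i(x)$ dominate $\mu_i(x) - C\epsilon$ up to ordering. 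Since $\lambda(x)$ lies on the level set $\{\Theta = h(x)\}$ and the subsolution condition (Lemma~\ref{lem: sub sol def}) forces the set of such $\lambda$ dominating $\mu - C\epsilon$ to be bounded (this is precisely Definition~\ref{def: C-sub}, which survives a small perturbation of $\underline u$), we conclude $|\lambda(x)| \le C$ on $P$, hence $|\ddb u|$ is bounded on $P$. Combined with $D^2 v \le D^2(v+\epsilon|x|^2)$ being controlled on $P$ in the complex directions and nonnegative, and a standard argument handling the "anti-holomorphic" part of the real Hessian (the real Hessian of a function with bounded $\partial\bar\partial$ and bounded from below can still be large, but on the contact set its positive part is controlled by the trace, which is $\le 2n\max\lambda_i + C\epsilon \le C$), one gets $0\le \det D^2(v+\epsilon|x|^2) \le C$ pointwise on $P$.

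\textbf{Conclusion.} Plugging the bound $\det D^2(v+\epsilon|x|^2)\le C$ on $P$ into the ABP inequality yields $c\,\epsilon^{2n} \le C\,|P| \le C\,|B_1| \le C$. But the left side was derived from $\min_{\bar B_1}(v + \epsilon|x-x_0|^2) \le -L + C$; tracking the dependence, the ABP estimate actually gives $L \le C(1 + \mathrm{vol}(P)^{1/2n}\cdot\sup_P\det^{1/2n})$, so $L\le C$ with $C$ depending only on $X,\alpha,h,\omega_0$ and the subsolution. A covering argument is not even needed since $x_0$ can be any minimum point.

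\textbf{Main obstacle.} I expect the only genuinely delicate point is the bound on the real Hessian determinant on the contact set, specifically controlling the full real Hessian $D^2 v$ (a $2n\times 2n$ object) rather than just the complex Hessian $\partial\bar\partial v$ (an $n\times n$ object): the real Hessian contains the $\partial\partial v$ and $\bar\partial\bar\partial v$ blocks which are not directly controlled by the equation. The resolution — already in B\l ocki and Sz\'ekelyhidi — is that on the contact set $D^2(v+\epsilon|x|^2)\ge 0$, so its determinant is at most $(2n)^{-2n}(\mathrm{tr}\, D^2(v+\epsilon|x|^2))^{2n}$, and the trace of the real Hessian of $v$ equals (up to the fixed contribution of $\omega_0,\alpha$ and the $\epsilon$ term) twice $\sum_i \lambda_i(x)$, which we have just bounded via the subsolution. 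Since we have explicitly recorded (Lemma~\ref{lem: arithmetic}) that $\Gamma \subset \{\sum \lambda_i > 0\}$ and that the subsolution condition makes $|\lambda(x)|$ bounded on $P$, all the needed ingredients are in place and the argument of \cite{Szek} applies without change.
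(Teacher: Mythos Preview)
Your proposal is correct and follows essentially the same approach as the paper, which in fact gives no proof at all but simply cites \cite[Proposition 10]{Szek} and remarks that the argument there goes through verbatim since it only uses ellipticity (together with the cone property $\Gamma\subset\{\sum\lambda_i>0\}$ already recorded after Lemma~\ref{lem: arithmetic}). Your sketch is a faithful expansion of that cited ABP argument, and you have correctly identified the one nontrivial point---that on the contact set the $\mathcal{C}$-subsolution hypothesis forces $|\lambda|\le C$, whence the real Hessian determinant is controlled via AM--GM on the trace.
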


When a $\mathcal{C}$-subsolution $\underline{u}$ exists, we will denote by $\chi:=  \omega_0 +\ddb \underline{u} \in\Omega$ the corresponding $(1,1)$ form.

\section{The $C^{2}$ estimate}

The main result of this section is
\begin{thm}\label{thm: C2 estimate}
Suppose $u:X\rightarrow \mathbb{R}$ is a smooth function solving the equation
\begin{equation}\label{eq: C2 SLag}
\Theta_{\alpha}(\chi+\ddb u) = h(x)
\end{equation}
where $h: X \rightarrow [(n-2)\frac{\pi}{2}+\beta, n\frac{\pi}{2})$ for some $\beta>0$.  Then there exists a constant $C$ depending only on the subsolution $\chi$, as well as $|h|_{C^{2}(X,\alpha)}, {\rm osc}_X u, \alpha, \beta$, such that
\[
|\de\dbar u| \leq C\left(1+ \sup_{X}|\nabla u|^{2}\right).
\]
\end{thm}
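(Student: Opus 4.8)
The plan is to establish the second-order estimate via a maximum principle argument applied to a carefully chosen test function, following the general strategy for complex Hessian equations (Hou-Ma-Wu, Sz\'ekelyhidi) but with substantial modifications to handle the failure of concavity of $\Theta$. First I would consider the quantity $Q = \log \lambda_1(\nabla^2 u) + \phi(|\nabla u|^2) + \psi(u)$, where $\lambda_1$ denotes the largest eigenvalue of the complex Hessian of $u$ with respect to $\alpha$ (or equivalently, work with $\Lambda_1$, the largest eigenvalue of $\alpha^{-1}(\chi + \ddb u)$), and $\phi, \psi$ are auxiliary one-variable functions to be chosen --- typically $\phi(t)$ decreasing and $\psi$ convex with large derivative, designed to absorb bad gradient terms. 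Since $\lambda_1$ may not be smooth at a maximum (eigenvalue multiplicity), I would use the standard perturbation trick of replacing $\lambda_1$ by an eigenvalue of a slightly perturbed endomorphism, differentiate, and only use that $Q$ attains an interior maximum at some point $x_0$.

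At $x_0$, I would choose normal coordinates for $\alpha$ diagonalizing $\chi + \ddb u$ with eigenvalues $\lambda_1 \geq \cdots \geq \lambda_n$, so that $\eta_{i\bar i} = 1 + \lambda_i^2$. Applying the linearized operator $\Delta_\eta = \eta^{j\bar k}\partial_j\partial_{\bar k}$ to $Q$ and using $\Delta_\eta Q \le 0$, one gets the basic inequality with three types of terms: (a) terms from differentiating the equation $\Theta_\alpha = h$ twice, which by concavity of $\arctan$ on the positive axis would be favorable, except that $\lambda_n$ can be negative where $\arctan$ is convex --- here is where Lemma~\ref{lem: arithmetic} (supercriticality forces $\lambda_1,\dots,\lambda_{n-1}>0$ and controls $|\lambda_n|$, $\lambda_{n-1}\ge\epsilon_0$) becomes essential to bound the single bad direction; (b) third-order terms of the form $\eta^{p\bar p}|\partial_p \Lambda_{1\bar 1}|^2/\lambda_1$ which need to be dominated using the concavity inequality together with a delicate rearrangement of derivative terms (the ``$C^{1,1}$-type'' cancellation); and (c) gradient terms controlled by choosing $\phi$, which forces the appearance of $\sum_p \eta^{p\bar p}|\nabla u|^2$ and hence the factor $1 + \sup|\nabla u|^2$ on the right-hand side. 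I would then split into cases according to whether $\sum_p \eta^{p\bar p}$ is large or bounded, and within the large case, whether $\eta^{1\bar 1}$ (the smallest coefficient, $= (1+\lambda_1^2)^{-1}$) is a definite fraction of $\sum_p \eta^{p\bar p}$; this is exactly the dichotomy provided by Proposition~\ref{prop: szek prop 6}, applied with $A = \Lambda(x_0)$ and $B$ coming from the subsolution $\chi$, which yields either a good negative term $-\kappa\sum_p\eta^{p\bar p}$ from the linear term $\sum \eta^{p\bar q}(B_{p\bar q} - A_{p\bar q})$, or uniform ellipticity of $\Delta_\eta$ at $x_0$, in which case standard elliptic estimates close the argument.

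\textbf{The main obstacle} I expect is the non-concavity of $\Theta$, which manifests precisely in the $\lambda_n < 0$ regime: the second-order terms $\sum_i \Theta''(\lambda_i)|\text{something}|^2$ are not all of the right sign, so the usual Hou-Ma-Wu third-order cancellation does not apply verbatim. The resolution will require exploiting the supercritical phase hypothesis $h \ge (n-2)\tfrac{\pi}{2} + \beta$ quantitatively: by Lemma~\ref{lem: arithmetic}, at most one eigenvalue is negative, it is bounded in absolute value, and $\lambda_{n-1}$ is bounded below, so the ``bad'' contribution from the $\arctan''(\lambda_n) > 0$ direction is a single controlled term that can be absorbed --- but tracking the constants through the gradient-dependent test function, and ensuring the negative terms from Proposition~\ref{prop: szek prop 6} genuinely dominate the remaining positive junk (including new negative terms peculiar to the complex setting, absent in Wang-Yuan's real graphical case), is where the real work lies. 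A secondary technical point is the non-smoothness of $\lambda_1$, handled by the standard eigenvalue-perturbation argument, and the care needed because $\Delta_\eta$ depends on $u$ through $\eta$, so commuting derivatives past $\Delta_\eta$ generates additional curvature and lower-order terms that must be shown to be harmless.
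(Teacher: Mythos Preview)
Your high-level strategy is right --- maximum principle on a test function combining a top-eigenvalue term, a gradient term $\phi(|\nabla u|^2)$, and a potential term $\psi(u)$; eigenvalue perturbation to handle multiplicity; invoke Proposition~\ref{prop: szek prop 6} for the subsolution dichotomy; and isolate the single possibly negative eigenvalue $\lambda_n$ via Lemma~\ref{lem: arithmetic}. But there is a genuine gap in your choice of leading term: you take $\log \lambda_1$, the standard Hou--Ma--Wu quantity, whereas the paper uses $\frac{1}{2}\log(1+\lambda_1^2)$ and stresses that this is \emph{not} cosmetic --- the extra convexity of $t\mapsto\frac{1}{2}\log(1+t^2)$ is essential to the estimate. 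Concretely, when you compute $\Delta_\eta$ of the leading term and expand the eigenvalue second derivatives, the paper's choice produces, after algebra, third-order cross terms whose coefficients involve combinations like $1+\lambda_1(\lambda_s+\lambda_q)-\lambda_s\lambda_q$ and $\lambda_1^2\lambda_q + 2\lambda_1 - \tilde\lambda_q$, whose signs can be controlled using Lemma~\ref{lem: arithmetic}; with $\log\lambda_1$ these favorable structures do not appear, and the $\lambda_n<0$ contribution cannot be absorbed by the mechanism you sketch.

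Even granting the correct test function, your account of the $\lambda_n<0$ case is too optimistic. It is not simply that a single bad $\arctan''(\lambda_n)$ term gets absorbed by supercriticality. The paper's mechanism is more delicate: one differentiates the equation once to write $(1+\lambda_n^2)^{-2}|\nabla_1\omega_{n\bar n}|^2 = \big|\nabla_1 h - \sum_{q<n}(1+\lambda_q^2)^{-1}\nabla_1\omega_{q\bar q}\big|^2$, applies Cauchy--Schwarz with carefully chosen weights $\alpha_q = \lambda_q/(\lambda_1-\tilde\lambda_q)$, and then closes the resulting inequality using the sign of $\sigma_{n-1}(\lambda)/\sigma_n(\lambda)$ from Lemma~\ref{lem: arithmetic}~{\it(iii)} together with the lower bound $\lambda_{n-1}\ge\epsilon_0$ from part~{\it(v)}. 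A second, parallel Cauchy--Schwarz argument handles a companion term. None of this is visible in your proposal, and it is where the genuine work of the proof lies. Finally, a minor point: your $\phi$ should be increasing (in the paper $(4K)^{-1}<\phi'<(2K)^{-1}$), not decreasing, so that the term $\phi'\sum_{q,j}\eta^{q\bar q}(|u_{j\bar q}|^2+|u_{qj}|^2)$ is available as a good positive term at the end.
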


\begin{proof}
The proof is via the maximum principle.  Let $\omega := \chi +\ddb u$.  We begin by defining functions $\phi(t), \psi(t)$ as follows.  Let $K =1+\sup_{X} |\nabla u|^{2}$, and set
\[
\phi(t) = -\frac{1}{2}\log(1- \frac{t}{2K}),\qquad t\in[0,K-1].
\]
Note that $\phi(t)$ satisfies 
\[
(4K)^{-1} < \phi' < (2K)^{-1}, \qquad \phi'' = 2(\phi')^2, \qquad  0\leq \phi(t) \leq \frac{1}{2}\log2
\]
Normalize $u$ so that $\inf_X u=0$.  By Proposition~\ref{prop: C0 est} we have a bound on $\sup_{X} u$.  Define $\psi:[0, \sup_{X}u] \rightarrow \mathbb{R}$ by   
\[
\psi(t) = -2At + \frac{A\tau}{2} t^{2}
\]
where $A \gg 0$ and $\tau>0$ are constants to be determined.  We choose $\tau$ sufficiently small so that
\[
A \leq -\psi' \leq 2A, \quad \psi'' = A\tau
\]
Define the Hermitian endomorphisms 
\[
\Lambda :=\alpha^{i\bar{j}}(\chi +\ddb u)_{k\bar{j}},\qquad \Lambda_0 := \alpha^{i\bar{j}}\chi_{k\bar{j}}
\] 
and recall that we are assuming $\chi$ is a $\mathcal{C}$-subsolution.  Let $\la_{max}$ denote the largest eigenvalue of $\Lambda$, which is a continuous function from $X$ to $\mathbb{R}$. We want to apply the maximum principle to the quantity
\[
G_{0}(x) := \frac{1}{2}\log(1+\lambda_{max}^{2}) + \phi(|\nabla u|^{2}) + \psi (u).
\]
This quantity is inspired by the one considered by Hou-Ma-Wu \cite{HMW} for the complex Hessian equations and subsequently used by Sz\'ekelyhidi \cite{Szek} for a large class of concave equations.  The gradient term used appearing in $G_0$ was first used by Chou-Wang \cite{CW} in their study of the real Hessian equations.  The function $G_{0}$ differs from the one considered in \cite{HMW, Szek} in its highest order term, where we have used a function of the eigenvalues which appears in the study of the real special Lagrangian equation; see, for instance \cite{WY, SmW}.  This modification is not merely cosmetic -- the added convexity of this higher order term appears essential to the estimate.  Finally, we note that,  unlike the estimates in the real case, we require extra lower order terms in order to counter additional negative terms which appear when differentiating the eigenvalues of a Hermitian (rather than symmetric) matrix.

The function $G_{0}(x)$ is clearly continuous, and hence achieves its maximum at some point $x_0 \in X$. Fix local coordinates $(z_1, \dots, z_n)$ centered at $x_0$ which are normal for the background K\"ahler metric $\alpha$, and such that $\omega(x_0)$ is diagonal with entries $\la_1 \geq \la_2 \geq \cdots \geq \la_n$.  By Lemma~\ref{lem: arithmetic} we may assume that $\la_1$ is sufficiently large so that $\la_1 >\max\{2|\la_n|, |\la_n|+1\}$. Since $\chi$ is a subsolution, we can find $\delta, R >0$ depending only on $h, \omega_0$ such that
\[
[\la(\Lambda_0(x_0)) - 2\delta Id + \Gamma_n] \cap \del \Gamma^{h(x_0)} \subset B_{R}(0).
\]
We may assume that $|\la(\Lambda(x_0))| >R$, so that Proposition~\ref{prop: szek prop 6} applies. In particular, there exists $\kappa>0$ depending only on $\delta, R$ and $h$ such that either
\begin{equation}\label{eq: C2 good pos}
\sum_p \frac{\chi_{p\bar{p}} - \la_p}{1+\la_p^2} > \kappa \sum_p \frac{1}{1+\la_p^2}.
\end{equation}
or $(1+\la_i^2)^{-1} > \kappa \sum_p (1+\la_p^2)^{-1}$ for all $i$.  Since $\la_n$ is uniformly bounded by Lemma~\ref{lem: arithmetic} part {\it (vi)}, we may assume that $\la_1$ is sufficiently large so that
\[
\frac{1}{1+\la_1^2} \leq \kappa\frac{1}{1+\la_n^2}.
\]
In particular,~\eqref{eq: C2 good pos} must hold.

In order to apply the maximum principle, we must differentiate the function $G_0$ twice.  Since the eigenvalues of $\Lambda$ need not be distinct at $x_0$, the function $G_0$ may only be continuous.  To circumvent this difficulty we use a perturbation argument similar to the one used in \cite{Szek}.   We choose a constant matrix $B$, defined in our fixed local coordinates to be a constant diagonal matrix $B_{pq}$ with real entries satisfying $B_{11}=B_{nn}=0$ and $0<B_{22} < \cdots < B_{n-1\,n-1}$, and such that
\[
\sum_j B_{jj} \leq (n-1)\frac{\epsilon_0}{2}
\]
where $\epsilon_0$ is the constant from Lemma~\ref{lem: arithmetic} part {\it(v)}. We work with the matrix $\tilde{\Lambda} = \Lambda-B$, and apply the maximum principle to the smooth function
\[
G(x) = \frac{1}{2}\log(1+\ti\lambda_{max}^{2}) + \phi(|\nabla u|^{2}) + \psi (u) 
\]
where $\ti\lambda_{max}$ denotes the largest eigenvalue of $\ti\Lambda$.  Note that $G(x) \leq G_0(x)$ and that $G(x)$ achieves its maximum at $x_0$, where we have $G(x_0) = G_0(x_0)$.  If we denote by $\ti{\lambda_i}$ are the eigenvalues of $\ti{\Lambda}$, then $\ti\lambda_1 = \lambda_1$, and all the remaining eigenvalues are distinct from $\ti\la_1$.   In particular, $\ti\la_1$ is a smooth function near $x_0$ and we may differentiate it freely.  Computing derivatives of $\ti\la_1$ yields 
\begin{equation}
\begin{aligned}
\nabla_s \ti\lambda_1 &= \nabla_s \omega_{1\bar{1}} - \nabla_sB_{11}\\
\nabla_{s}\nabla_{\bar{s}}\ti\la_1 &= \nabla_s \nabla_{\bar{s}} \omega_{1\bar{1}} + \sum_{q>1} \frac{ |\nabla_s\omega_{q\bar{1}}|^{2} + |\nabla_{s} \omega_{1\bar{q}}|^2}{(\lambda_1 - \ti\la_q)}\\
&\quad+ \nabla_s\nabla_{\bar{s}}B_{11} - 2{\rm Re} \sum_{q>1} \frac{ \nabla_s\omega_{q\bar{1}} \nabla_{\bar{s}}B_{1\bar{q}} + \nabla_{s}\omega_{1\bar{q}}\nabla_{\bar{s}}B_{p\bar{1}}}{\lambda_1 - \ti\la_p}\\
&\quad+ \ti\la_1^{pq,r\ell}\nabla_s(B_{pq})\nabla_{\bar{s}}(B_{r\ell}),
\end{aligned}
\end{equation}
where
\[
\ti\la_1^{pq,r\ell} = (1-\delta_{1p}) \frac{\delta_{1q}\delta_{1r}\delta_{p\ell}}{\ti\la_1-\ti\la_p} +(1-\delta_{1r}) \frac{\delta_{1\ell}\delta_{1p}\delta_{rq}}{\ti\la_1-\ti\la_r} 
\]
see, for example, \cite[Equation (70)]{Szek} or \cite{SzTW}.  Evaluating this expression at $x_0\in X$, and using that $B$ is constant, $B_{11}=0$, and that we are working in normal coordinates for $\alpha$, we have
\begin{equation}\label{eq: Lap EV0}
\begin{aligned}
\nabla_s \ti\lambda_1 &= \nabla_s \omega_{1\bar{1}}\\
\nabla_{s}\nabla_{\bar{s}}\ti\la_1 &= \nabla_s \nabla_{\bar{s}} \omega_{1\bar{1}} + \sum_{q>1} \frac{ |\nabla_s\omega_{q\bar{1}}|^{2} + |\nabla_{s} \omega_{1\bar{q}}|^2}{(\lambda_1 - \ti\la_q)}
\end{aligned}
\end{equation}
We are thus reduced to differentiating equation~\eqref{eq: C2 SLag}. Using \eqref{eq: derivtheta}, and computing at $x_0$, we have 
\begin{equation}\label{eq: Lap EV1}
\begin{aligned}
\nabla_{\bar{b}}\nabla_{a}h &= \nabla_{\bar b}(\eta^{s\bar{q}} \nabla_{a} \omega_{s\bar{q}}) \\
&= \eta^{s\bar{q}}\nabla_{\bar{b}} \nabla_{a} \omega_{s\bar{q}}+(\nabla_{\bar{b}}\eta^{s\bar{q}})\nabla_{a}\omega_{s\bar{q}} \\
&= \eta^{s\bar{q}}\nabla_s\nabla_{\bar{q}}\omega_{a\bar{b}} + \eta^{s\bar{q}}[\nabla_{\bar{b}},\nabla_{s}]\omega_{a\bar{q}} + (\nabla_{\bar{b}}\eta^{s\bar{q}})\nabla_{a}\omega_{s\bar{q}}.
\end{aligned}
\end{equation}
Expanding the third term, we have
\begin{equation}
\begin{aligned}
(\nabla_{\bar{b}}\eta^{s\bar{q}}) &= - \eta^{s\bar{k}}\eta^{j\bar{q}}\nabla_{\bar{b}}\eta_{j\bar{k}}\\
&= - \eta^{s\bar{k}}\eta^{j\bar{q}}\left(\alpha^{p\bar{m}}\omega_{p\bar{k}}\nabla_{\bar{b}}\omega_{j\bar{m}} + \alpha^{p\bar{m}}\omega_{j\bar{m}}\nabla_{\bar{b}}\omega_{p\bar{k}}\right),
\end{aligned}
\end{equation}
Using that $\alpha, \omega$ are diagonal at $x_0$, we can now solve for $\Delta_\eta\omega_{1\bar{1}}$, 
\begin{equation}
\begin{aligned}
\eta^{s\bar{q}}\nabla_s\nabla_{\bar{q}} \omega_{1\bar{1}} &= \nabla_{1}\nabla_{\bar{1}}h - \sum_{s} \frac{\la_s R_{1}\,^{s}\,_{s\bar{1}}}{1+\la_s^2} \\
&\quad+ \sum_{s} \frac{\la_1R^{\bar{1}}\,_{\bar{s}s\bar{1}}}{1+\la_s^2} + \sum_{s,q} \frac{\la_s+\la_q}{(1+\la_s^2)(1+\la_q^2)}|\nabla_1\omega_{q\bar{s}}|^{2}.
\end{aligned}
\end{equation}
Combining this expression with~\eqref{eq: Lap EV0} allows us to solve for $\Delta_\eta\ti\lambda_1$. This allows us to compute the Laplacian of the highest order term from $G(x)$ at the point $x_0$
\begin{equation*}
\begin{aligned}
\Delta_{\eta} \frac{1}{2}\log(1+\ti\la_{1}^2)  &= \frac{\lambda_1\Delta_{\eta}\ti\lambda_1}{(1+\lambda_1^2)} + \frac{1-\lambda_1^2}{(1+\lambda_1^2)^3}|\nabla_1\omega_{1\bar1}|^2\\
&\quad +\sum_{s>1}\frac{1-\lambda_1^2}{(1+\lambda_1^2)^2(1+\lambda_s^2)}|\nabla_s\omega_{1\bar 1}|^2. 
\end{aligned}
\end{equation*}
The Laplacian term can be computed as
\begin{equation}
\begin{aligned}
\Delta_{\eta}\lambda_1&=\nabla_{1}\nabla_{\bar{1}}h - \sum_{s} \frac{\la_s R_{1}\,^{s}\,_{s\bar{1}}}{1+\la_s^2}+ \sum_{s} \frac{\la_1R^{\bar{1}}\,_{\bar{s}s\bar{1}}}{1+\la_s^2}+\sum_s\frac{\nabla_s\nabla_{\bar{s}}B_{11}}{1+\lambda_s^2}\nonumber\\
&\quad +  \sum_{s,q} \frac{\la_s+\la_q}{(1+\la_s^2)(1+\la_q^2)}|\nabla_1\omega_{q\bar{s}}|^{2}+ \sum_s\sum_{q>1} \frac{ |\nabla_s\omega_{q\bar{1}}|^{2} + |\nabla_{s} \omega_{1\bar{q}}|^2}{(1+\lambda_s^2)(\lambda_1 - \ti\la_q)}.
\end{aligned}
\end{equation}

After some algebra we arrive at the formula
\begin{equation}
\begin{aligned}
\label{eq: C2 equality}
\Delta_{\eta} &\frac{1}{2}\log(1+\ti\la_1^2) = \frac{\la_1}{1+\la_1^2} \nabla_1\nabla_{\bar{1}}h +\sum_{s}\frac{-\la_1 \la_s R_{1}\,^{s}\,_{s\bar{1}}+\la_1^2 R^{\bar{1}}\,_{\bar{s}s\bar{1}}}{(1+\la_1^2)(1+\la_s^2)}\\
&\quad +  \sum_s\sum_{q>1} \frac{\la_1[1+\la_1(\la_s+\la_q)-\la_q\la_s + (\la_s+\la_q)(\la_q-\ti\la_q)]}{(1+\la_1^2)(1+\la_s^2)(1+\la_q^2)(\la_1-\ti\la_q)} |\nabla_{1}\omega_{q\bar{s}}|^{2} \\
&\quad+\frac{1}{(1+\la_1^2)^2}|\nabla_1\omega_{1\bar{1}}|^2 + \sum_{s>1} \frac{\lambda_1^2\lambda_s + 2\lambda_1 - \ti\la_s + \la_s(\la_s-\ti\la_s)}{(1+\la_1^2)^2(1+\la_s^2)(\la_1-\ti\la_s)} |\nabla_s\omega_{1\bar{1}}|^{2}\\
&\quad + \sum_{s,q>1} \frac{\la_1}{(1+\la_1^2)(1+\la_s^2)(\la_1-\ti\la_q)}|\nabla_q\omega_{s\bar{1}}|^2.
\end{aligned}
\end{equation}

The main difficulty is finding a useful estimate for this quantity.  For the remainder of this section we let $C$ denote a constant depending only on the stated data, but which may change from line to line.  The first two terms contribute only a negative constant.  For the third term, we require the following simple lemma
\begin{lem}
If $\lambda_1 \geq \lambda_2 \geq \cdots \geq \lambda_n$, and these numbers satisfy $\Theta(\la) \geq (n-2)\frac{\pi}{2}$, then
\[
1+\lambda_1(\lambda_j + \lambda_\ell) - \lambda_j\lambda_\ell \geq0 
\]
unless $j=\ell=n$ and $\lambda_n < 0$.
\end{lem}
\begin{proof}
The lemma is obvious if $\lambda_j, \lambda_\ell \geq 0$, since $\lambda_1 \geq \max\{ \lambda_j, \lambda_\ell\}$.  By symmetry we can consider the case when $j=n, \lambda_n <0$, and $\ell <n$.  In this case Lemma~\ref{lem: arithmetic} part {\it (i)} guarantees that $\lambda_\ell + \lambda_1 \geq 0$, and so again we are done, since the final term above is positive.  
\end{proof}

The fourth term in~\eqref{eq: C2 equality} is positive, as is the fifth term, unless $s=n$ and $\lambda_n<0$.  The sixth term is also clearly positive.  Thus, if $\lambda_n\geq0$, then
\begin{equation}
\Delta_{\eta} \frac{1}{2}\log(1+\ti\la_1^2) \geq -C.\nonumber
\end{equation}
If $\lambda_n<0$ then the estimate is much worse, due to the presence of several negative terms. Throwing away some but not all of the positive terms,  we rewrite \eqref{eq: C2 equality} as
\begin{equation}
\begin{aligned}
\label{hardercase}
\Delta_{\eta} \frac{1}{2}\log(1+\ti\la_1^2) &\geq -C +\frac{1}{(1+\la_1^2)^2}|\nabla_1\omega_{1\bar{1}}|^2 \\
&\quad+  \sum_{q>1} \frac{\la_1[1+2\la_1\la_q-\la_q^2 + 2\la_q(\la_q-\ti\la_q)]}{(1+\la_1^2)(1+\la_q^2)^2(\la_1-\ti\la_q)} |\nabla_{1}\omega_{q\bar{q}}|^{2} \\ 
&\quad + \sum_{q>1} \frac{\lambda_1^2\lambda_q + 2\lambda_1 - \ti\la_q + \la_q(\la_q-\ti\la_q)}{(1+\la_1^2)^2(1+\la_q^2)(\la_1-\ti\la_q)} |\nabla_q\omega_{1\bar{1}}|^{2}.\\
\end{aligned}
\end{equation}
Let us analyze this more difficult case.  We first estimate the second line above.  Note that we can write
\begin{equation*}
\begin{aligned}
\frac{\la_1[1+2\la_1\la_q-\la_q^2 + 2\la_q(\la_q-\ti\la_q)]}{(1+\la_1^2)(1+\la_q^2)^2(\la_1-\ti\la_q)}  &= \frac{\lambda_q}{(1+\la_q^2)^2(\la_1-\ti\la_q)}\\
&\quad + \frac{(\la_1-\la_q)(1+\la_1\la_q)+2\la_q\la_1(\la_q-\ti\la_q)}{(1+\la_1^2)(1+\la_q^2)^2(\la_1-\ti\la_q)}\\
&= \frac{\lambda_q}{(1+\la_q^2)^2(\la_1-\ti\la_q)} + \frac{1+\la_1\la_q}{(1+\la_1^2)(1+\la_q^2)^2}\\
&\quad + \frac{(\la_q\la_1-1)(\la_q-\ti\la_q)}{(1+\la_1^2)(1+\la_q^2)^2(\la_1-\ti\la_q)},
\end{aligned}
\end{equation*}
and so we can rewrite the first and second lines in \eqref{hardercase} (excluding the constant) as three separate sums
\begin{equation}\label{eq: C2 est terms}
\begin{aligned}
\big( \text{{\bf I}} \big) &= \sum_{q>1} \frac{\lambda_q}{(1+\la_q^2)^2(\la_1-\ti\la_q)} |\nabla_{1}\omega_{q\bar{q}}|^{2} + \frac{1}{(1+\la_1^2)^2} |\nabla_1 \omega_{1\bar{1}}|^{2}\\
\big( \text{{\bf II}} \big) &= \sum_{q} \frac{1+\la_1\la_q}{(1+\la_1^2)(1+\la_q^2)^2} |\nabla_{1}\omega_{q\bar{q}}|^{2}- \frac{1}{(1+\la_1^2)^2} |\nabla_1 \omega_{1\bar{1}}|^{2}\\
\big( \text{{\bf III}} \big) &= \sum_{1<q<n}\frac{(\la_q\la_1-1)(\la_q-\ti\la_q)}{(1+\la_1^2)(1+\la_q^2)^2(\la_1-\ti\la_q)} |\nabla_{1}\omega_{q\bar{q}}|^{2}
\end{aligned}
\end{equation}

where we have used that $B_{nn}=0$.  We may assume that $\la_1$ is sufficiently large so that for $q<n$ we have $\la_1\la_q \geq 1$, since $\la_{n-1} \geq \epsilon_0$ by Lemma~\ref{lem: arithmetic} part {\it (v)}.  In particular, the third sum is positive.  We next consider terms ({\bf I}) and ({\bf II}) individually, beginning with term ({\bf I}).  The only negative contribution to the sum occurs when $q=n$.  Differentiating our main equation \eqref{eq: C2 SLag}, we have, for any $\delta, \alpha_j >0$, $j=1,\dots,n-1$
\begin{equation}\label{eq: delt split up}
\begin{aligned}
\frac{|\nabla_1\omega_{n\bar{n}}|^{2}}{(1+\la_n^2)^2} &= \bigg| \nabla_1h - \sum_{q<n} \frac{\nabla_1\omega_{q\bar{q}}}{1+\la_q^2}\bigg|^2\\
&\leq (1+\frac{\la_1}{\delta})|\nabla_1h|^{2} + (1+\frac{\delta}{\la_1})\bigg| \sum_{q<n} \frac{\nabla_1\omega_{q\bar{q}}}{1+\la_q^2}\bigg|^2\\
&\leq (1+\frac{\la_1}{\delta})|\nabla_1h|^{2} + (1+\frac{\delta}{\la_1})\left( \sum_{q<n} \frac{|\nabla_1\omega_{q\bar{q}}|^{2}\alpha_{q}}{(1+\la_q^2)^2}\right) \cdot \left(\sum_{j<n} \frac{1}{\alpha_{j}}\right).
\end{aligned}
\end{equation}
In the above we have used Young's inequality for the first line, and the Cauchy-Schwartz inequality in the third line. Now, set $\alpha_q = \frac{\lambda_q}{\la_1-\ti\la_q}$ for $1<q<n$, and $\alpha_1=1$, and choose $\delta = \epsilon_0/2$, where $\epsilon_0$ is as in Lemma~\ref{lem: arithmetic} part {\it (v)}.  Let us denote
\begin{equation}
\begin{aligned}
\Upsilon &:= \left(\sum_{q<n} \frac{|\nabla_1\omega_{q\bar{q}}|^{2}\alpha_{q}}{(1+\la_q^2)^2}\right)\\
&\quad =  \sum_{1<q<n} \frac{\lambda_q}{(1+\la_q^2)^2(\la_1-\ti\la_q)} |\nabla_{1}\omega_{q\bar{q}}|^{2}+ \frac{|\nabla_1 \omega_{1\bar{1}}|^{2}}{(1+\la_1^2)}.\nonumber
\end{aligned}
\end{equation}
Multiplying \eqref{eq: delt split up} by $\frac{\lambda_n}{(\lambda_1-\lambda_n)}$ and observing that $\frac{\lambda_n(\delta+\lambda_1)|\nabla_1h|^2}{\delta(\lambda_1-\lambda_n)}\geq-C$, by our choice of $\alpha_j$ we have,
\begin{equation}
\frac{\la_n|\nabla_1\omega_{n\bar{n}}|^{2}}{(1+\la_n^2)^2(\la_1-\la_n)}  \geq -C + \frac{\la_n}{(\la_1-\la_n)}(1+\frac{\delta}{\la_1})\left(1 +\sum_{1<j<n} \frac{\la_1 - \ti\la_j}{\la_j}\right)\Upsilon.
\end{equation}
Note that the left hand side above is the $q=n$ term from ({\bf I}), while the remaining terms from ({\bf I}) are equal to $\Upsilon$. Using that $\ti\la_n = \la_n<0$, we estimate ({\bf I}) as follows
\begin{equation}
\begin{aligned}
\text{ ({\bf I})}&\geq -C + \Upsilon \frac{\la_n}{\la_1-\la_n}\left\{\frac{\la_1-\la_n}{\la_n} +\sum_{j<n} \frac{\la_1}{\la_j} + \delta \sum_{j<n}\frac{1}{\la_j} -(1+ \frac{\delta}{\la_1}) \sum_{1<j<n} \frac{\ti\la_j}{\la_j}\right\}\\
&\geq -C + \Upsilon \frac{\la_n}{\la_1-\la_n}\left\{ \la_1\sum_{j}\frac{1}{\la_j} - \sum_{j>1}\frac{\ti\la_j}{\la_j} + \delta \sum_{j<n}\frac{1}{\la_{j}} \right\}\\
&\geq -C +\Upsilon \frac{\la_n}{\la_1-\la_n}\left\{ \la_1\frac{\sigma_{n-1}(\lambda)}{\sigma_n(\lambda)}- \sum_{j>1}\frac{\ti\la_j}{\la_j} + \delta \frac{n-1}{\la_{n-1}} \right\}.
\end{aligned}
\end{equation}
Since $\sigma_{n-1}(\la(\Lambda)) \geq 0$, and $\sigma_n(\la(\Lambda))<0$ by Lemma~\ref{lem: arithmetic} part {\it (iii)}, the first term in the brackets is negative.  Furthermore, by our choice of $B$ we know that
\[
\begin{aligned}
\sum_{j>1} \frac{\ti\la_j}{\la_j} &= (n-1) -\sum_{1<j<n} \frac{B_{jj}}{\la_j}\\
& \geq (n-1) -\frac{1}{\epsilon_0} \sum_j B_{jj}\\
&\geq \frac{n-1}{2},
\end{aligned}
\]
and hence our choice of $\delta$ implies that the final two terms combine to be negative as well. Thus, we obtain that the term ({\bf I}) in equation~\eqref{eq: C2 est terms} is bounded below by a negative constant depending only on the stated data.

Next we consider the sign of the sum ({\bf II}). Again, the only negative contribution to the sum occurs when $q=n$.

We use an estimate similar to that in~\eqref{eq: delt split up} to get that, for any $\delta, \alpha_j, \alpha_j'>0$, $1\leq j<n$
\begin{equation}\label{eq: C2 II est1}
\begin{aligned}
\frac{\la_1\la_n|\nabla_1\omega_{n\bar{n}}|^{2}}{(1+\la_1^2)(1+\la_n^2)^2}  &\geq -\frac{C}{\delta} + \frac{\la_1\la_n}{(1+\la_1^2)}(1+\frac{\delta}{\la_1})\bigg| \sum_{q<n} \frac{\nabla_1\omega_{q\bar{q}}}{1+\la_q^2}\bigg|^2\\
&\geq -\frac{C}{\delta} + \la_n\left( \sum_{q<n} \frac{|\nabla_1\omega_{q\bar{q}}|^{2}\la_1\alpha_{q}}{(1+\la_1^2)(1+\la_q^2)^2}\right) \cdot \left(\sum_{j<n} \frac{1}{\alpha_{j}}\right)\\
&\quad +\delta \la_n\left( \sum_{q<n} \frac{|\nabla_1\omega_{q\bar{q}}|^{2}\alpha_{q}'}{(1+\la_1^2)(1+\la_q^2)^2}\right) \cdot \left(\sum_{j<n} \frac{1}{\alpha_{j}'}\right)
\end{aligned}
\end{equation}
where in the last line we have used the Cauchy-Schwartz inequality twice.  We take $\alpha_q =\la_q$, and $\alpha_q'=1$ for $1\leq q<n$.  To simplify notation, let us define
\[
\ti\Upsilon = \sum_{q<n}\frac{|\nabla_1\omega_{q\bar{q}}|^{2}\la_1\la_{q}}{(1+\la_1^2)(1+\la_q^2)^2}.
\]
Substituting the estimate in~\eqref{eq: C2 II est1} into the expression for term ({\bf II}) and simplifying we obtain
\begin{equation}
\begin{aligned}
\big(\text{{\bf II}}\big) &\geq -\frac{C}{\delta} - \frac{1}{(1+\la_1^2)^2} |\nabla_1 \omega_{1\bar{1}}|^{2}+ \frac{|\nabla_1 \omega_{n\bar{n}}|^{2}}{(1+\la_1^2)(1+\la_n^2)^2} \\
&\quad + \bigg\{1 +\delta(n-1) \la_n\bigg\} \sum_{q<n} \frac{|\nabla_1 \omega_{q\bar{q}}|^{2}}{(1+\la_1^2)(1+\la_q^2)^2}  + \ti\Upsilon \left\{ 1+ \la_n \left(\sum_{j<n} \frac{1}{\la_j}\right)\right\}.
\end{aligned}
\end{equation}
If we choose $\delta$ sufficiently small depending only on the uniform lower bound for $\la_n$ provided by Lemma~\ref{lem: arithmetic} part {\it (vi)}  then the first term on the second line is positive, while the final term is always positive by Lemma~\ref{lem: arithmetic} part {\it (iii)}. Thus
\[
\big( \text{{\bf II}} \big) \geq -C- \frac{1}{(1+\la_1^2)^2} |\nabla_1 \omega_{1\bar{1}}|^{2}. 
\]
Summarizing, we have proven the estimate
\begin{equation}\label{eq: C2 est key}
\begin{aligned}
\Delta_{\eta} \frac{1}{2}\log(1+\ti\la_1^2) &\geq -C -\frac{1}{(1+\la_1^2)^2}|\nabla_1\omega_{1\bar{1}}|^2 \\
&\quad + \sum_{q>1} \frac{\lambda_1^2\lambda_q + 2\lambda_1 - \ti\la_q + \la_q(\la_q-\ti\la_q)}{(1+\la_1^2)^2(1+\la_q^2)(\la_1-\ti\la_q)} |\nabla_q\omega_{1\bar{1}}|^{2}\\
&\geq -C -\frac{1}{(1+\la_1^2)^2}|\nabla_1\omega_{1\bar{1}}|^2 +\frac{\la_1^2\la_n|\nabla_n \omega_{1\bar{1}}|^{2}}{(1+\la_1^2)^2(1+\la_n^2)(\la_1-\la_n)}.
\end{aligned}
\end{equation}
where in the last line we have used the obvious fact that
\[
\lambda_1^2\lambda_q + 2\lambda_1 - \ti\la_q + \la_q(\la_q-\ti\la_q)\geq 0, \qquad 1<q<n.
\]
We now compute the action of the linearized operator on the lower order terms in the definition of $G$.
\begin{equation*}
\begin{aligned}
\Delta_{\eta} \psi(u) &= \psi''(u) \sum_{q}\frac{|u_q|^{2} }{1+\la_q^2} + \psi'(u) \sum_{q}\frac{\la_q - \chi_{q\bar{q}}}{1+\la_q^2}\\
\Delta_{\eta} \phi(|\nabla u|^{2}) &= \frac{\phi''(|\nabla u|^{2})}{1+\la_q^2} \sum_{q}\left|\sum_{j} u_{q\bar{j}}u_{j} +  u_{qj}u_{\bar{j}}\right|^{2}\\
&\quad +2\phi'(|\nabla u|^{2})\sum_{j}\text{Re}\left( u_{j}h_{\bar{j}} - \sum_{q}\frac{u_{j}\nabla_{\bar{j}}\chi_{q\bar{q}}}{1+\la_q^2}\right)\\
&\quad + \phi'(|\nabla u|^{2})\sum_{q}\left(\sum_{j} \frac{|u_{j\bar{q}}|^{2} + |u_{qj}|^{2}}{1+\la_q^2} + \frac{R_{q\bar{q}}\,^{\bar{k}\ell}u_{\ell}u_{\bar{k}}}{1+\la_q^2}\right).\\
\end{aligned}
\end{equation*}

Now, it is easy to see that

\[
 \frac{R_{q\bar{q}}\,^{\bar{k}\ell}u_{\ell}u_{\bar{k}}}{1+\la_q^2}+2\text{Re}\left( u_{j}h_{\bar{j}} - \frac{u_{j}\nabla_{\bar{j}}\chi_{p\bar{p}}}{1+\la_p^2}\right) \geq -C_0K,
\]
so at $x_0$, where $G$ achieves its maximum, we have 
\begin{equation*}
\begin{aligned}
0 \geq \Delta_{\eta} G &\geq -C_1 -\frac{|\nabla_1\omega_{1\bar{1}}|^2}{(1+\la_1^2)^2} +\frac{\la_1^2\la_n|\nabla_n \omega_{1\bar{1}}|^{2}}{(1+\la_1^2)^2(1+\la_n^2)(\la_1-\la_n)}\\
&\quad +\phi'' \sum_q \frac{1}{1+\la_q^2}\left|\sum_{j} u_{q\bar{j}}u_{j} + u_{qj}u_{\bar{j}}\right|^{2} -\phi'C_0K\\
&\quad + \sum_q\left(\phi'\sum_{j} \frac{|u_{j\bar{q}}|^{2} + |u_{qj}|^{2}}{1+\la_q^2}+ \psi''\frac{|u_q|^2}{1+\la_q^2} + \psi'\frac{\la_q - \chi_{q\bar{q}}}{1+\la_q^2}\right).
\end{aligned}
\end{equation*}
Furthermore, we have $\nabla_pG(x_0)=0$, and so
\begin{equation*}
\frac{\lambda_1\nabla_p \omega_{1\bar{1}}}{1+\la_1^2} = -\phi'\sum_{j}\left(u_{pj}u_{\bar{j}} + u_{j}u_{p\bar{j}}\right) - \psi'u_{p}
\end{equation*}
In particular,
\begin{equation*}
\begin{aligned}
\frac{\la_1^2\la_n|\nabla_n \omega_{1\bar{1}}|^{2}}{(1+\la_1^2)^2(1+\la_n^2)(\la_1-\la_n)} &\geq \frac{\la_n(1+\delta)(\phi')^2}{(1+\la_n^2)(\la_1-\la_n)}\left|\sum_{j}u_{nj}u_{\bar{j}} + u_{j}u_{n\bar{j}}\right|^{2}\\
&\quad + \frac{\la_n(1+\delta^{-1})(\psi')^2}{(1+\la_n^2)(\la_1-\la_n)}|u_{n}|^{2}.
\end{aligned}
\end{equation*}
In a similar fashion we have
\begin{equation*}
\begin{aligned}
\frac{|\nabla_1\omega_{1\bar{1}}|^2}{(1+\la_1^2)^2} &\leq \frac{(1+\delta)(\phi')^2}{\la_1^2}\left|\sum_{j}u_{1j}u_{\bar{j}} + u_{j}u_{1\bar{j}}\right|^{2}\\
&\quad + \frac{(1+\delta^{-1})(\psi')^2}{\la_1^2}|u_{1}|^{2}.
\end{aligned}
\end{equation*}
We now use that $\phi'' = 2(\phi')^2$.  If we take $\delta =1/2$, then we have at $x_0$
\begin{equation*}
\begin{aligned}
0 &\geq -C_1 + (\phi')^2\left|\sum_{j} u_{1\bar{j}}u_{j} + u_{1j}u_{\bar{j}}\right|^{2}  \left(\frac{2}{1+\la_1^2}-\frac{1+\delta}{\la_1^2}\right) \\
&\quad+\frac{(\phi')^2}{1+\la_n^2}\left|\sum_{j} u_{n\bar{j}}u_{j} + u_{nj}u_{\bar{j}}\right|^{2}\left(2 +\frac{\la_n(1+\delta)}{\la_1-\la_n}\right)\\   
&\quad- \frac{(1+\delta^{-1})(\psi')^2}{\la_1^2}|u_{1}|^{2}+\frac{\la_n(1+\delta^{-1})(\psi')^2}{(1+\la_n^2)(\la_1-\la_n)}|u_{n}|^{2}\\
&\quad + \phi''\sum_{1<q<n}\frac{1}{1+\la_q^2} \left|\sum_{j} u_{q\bar{j}}u_{j} + u_{qj}u_{\bar{j}}\right|^{2} -\phi'C_0K\\
&\quad + \sum_q\left(\phi'\sum_{j} \frac{|u_{j\bar{q}}|^{2} + |u_{qj}|^{2}}{1+\la_q^2}+ \psi''\frac{|u_q|^2}{1+\la_q^2} + \psi'\frac{\la_q - \chi_{q\bar{q}}}{1+\la_q^2}\right).
\end{aligned}
\end{equation*}
If $\la_1$ is sufficiently large, depending only on the lower bound for $\la_n$, then 
\[
\left(\frac{2}{1+\la_1^2}-\frac{1+\delta}{\la_1^2}\right) \geq 0,  \qquad \left(2 +\frac{\la_n(1+\delta)}{\la_1-\la_n}\right)\geq 0.
\]
In particular, since $\phi'' \geq 0$ we have
\begin{equation*}
\begin{aligned}
0 &\geq -C_1 -\phi'C_0K- \frac{(1+\delta^{-1})(\psi')^2}{\la_1^2}|u_{1}|^{2}\\
&\quad +\left(\psi''+\frac{\la_n(1+\delta^{-1})(\psi')^2}{\la_1-\la_n}\right)\frac{|u_n|^2}{1+\la_n^2}\\
&\quad + \phi'\sum_{q,j} \frac{|u_{j\bar{q}}|^{2} + |u_{qj}|^{2}}{1+\la_q^2}+ \psi''\sum_{q<n}\frac{|u_q|^2}{1+\la_q^2} + \psi'\sum_q\frac{\la_q - \chi_{q\bar{q}}}{1+\la_q^2}.
\end{aligned}
\end{equation*}
As long as $\la_1$ is sufficiently large, depending only on $\tau, A$, the bracketed term on the second line is positive. For the last term on the first line we clearly have the estimate
\[
\frac{(1+\delta^{-1})(\psi')^2}{\la_1^2}|u_1|^2 \leq \frac{C_0A^2K}{\la_1^2},
\]
and so
\begin{equation*}
\begin{aligned}
0 &\geq -C_1 -\phi'C_0K - \frac{C_0A^2K}{\la_1^2}\\
&\quad + \phi'\sum_{q,j} \frac{|u_{j\bar{q}}|^{2} + |u_{qj}|^{2}}{1+\la_p^2}+  \psi'\sum_{q}\frac{\la_q - \chi_{q\bar{q}}}{1+\la_q^2}.
\end{aligned}
\end{equation*}
Now, if $\la_1$ is sufficiently large relative to $\chi_{1\bar{1}}$, then we have
\[
|u_{1\bar{1}}|^{2} \geq \frac{1}{2} \la_1^2
\]
and so, since $(4K)^{-1} < \phi' < (2K)^{-1}$ we have 
\begin{equation}
\begin{aligned}
0 &\geq -C_1 -C_0 - \frac{C_0A^2K}{\la_1^2}\\
&\quad + \frac{1}{2}\phi'\frac{\la_1^{2}}{1+\la_1^2}+  \psi'\sum_{q}\frac{\la_q - \chi_{q\bar{q}}}{1+\la_q^2}.
\end{aligned}
\end{equation}
Recall from equation~\eqref{eq: C2 good pos} that we have
\[
\sum_{q}\frac{\chi_{q\bar{q}}-\la_q}{1+\la_q^2} \geq \kappa \sum_{q} \frac{1}{1+\la_q^2}. 
\]
Since $|\la_n| \leq C_3$ by Lemma~\ref{lem: arithmetic}, we can choose $A$ sufficiently large so that
\[
A\frac{\kappa}{1+\la_n^2} \geq -C_1-C_0,
\]
then, since $A<-\psi'<2A$, we have
\[
0 \geq  \frac{\la_1^2}{8K(1+\la_1^2)} - \frac{C_0A^2K}{\la_1^2}.
\]
In other words,
\[
 \frac{\la_1^2}{K^2} \leq \frac{8C_{0}A ^2(1+\la_1^2)}{\la_1^2} \leq C_5.
\]
Thus, at the maximum of $G$ we have $\la_1 \leq C_5K$. At this point we have
\[
\frac{1}{2}\log(1+\la_1^2)  - \frac{1}{2}\log(1-\frac{|\nabla u|^2}{2K}) + \psi(u) \leq \frac{1}{2}\log(1+C_5K^2) +C
\]
which after simplification yields the desired estimate;
\[
\sqrt{1+\la_1^2} \leq C_6 K.
\]
\end{proof}

\section{The blow-up argument and the gradient estimate}

We now apply a blow-up argument to the estimate in Theorem~\ref{thm: C2 estimate} to obtain a gradient bound.  By contrast with the general setting considered by Sz\'kelyhidi \cite{Szek}, or the complex Hessian equation studied by Dinew-Ko\l odziej \cite{DK}, the argument here is rather simple.  By the lower bound for $\omega$ from Lemma~\ref{lem: arithmetic}, part {\it (vi)} it suffices to prove

\begin{prop}
Suppose $u: X \rightarrow \mathbb{R}$ satisfies
\begin{itemize}
\item[{\it (i)}] $\omega_0+\ddb u \geq -K\alpha$,
\item[{\it (ii)}] $\sup_{X}|u| \leq K$,
\item[{\it (iii)}] $|\de\dbar u| \leq K(1+\sup_{X} |\nabla u|^{2})$,
\end{itemize}
for a uniform constant $K<+\infty$.  Then there exists a constant $C$, depending only on $(X,\alpha), \omega_0,$ and $K$ such that
\[
\sup_{X} |\nabla u| \leq C.
\]
\end{prop}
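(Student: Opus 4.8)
The statement asserts a uniform gradient bound from the three structural hypotheses: a one-sided lower bound on the complex Hessian, a sup bound on $u$, and the second-order bound $|\de\dbar u| \le K(1+\sup_X|\nabla u|^2)$ coming from Theorem~\ref{thm: C2 estimate}. The natural approach is a \emph{proof by contradiction and blow-up}. Suppose the conclusion fails; then there is a sequence $u_j$ satisfying $(i)$--$(iii)$ with $L_j := \sup_X |\nabla u_j| \to \infty$. Let $x_j \in X$ be a point where $|\nabla u_j|(x_j)$ is comparable to $L_j$. Working in a fixed finite atlas of coordinate balls for $(X,\alpha)$, we may assume (after passing to a subsequence) that all the $x_j$ lie in one coordinate chart $B \subset \mathbb{C}^n$, and we rescale: set $v_j(z) := u_j(x_j + L_j^{-1} z)$ on the ball $B_{j} := B(0, c L_j)$ which exhausts $\mathbb{C}^n$ as $j \to \infty$.

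\textbf{Estimates on the rescaled functions.} The point of the rescaling is to see what the three hypotheses become. First, $|\nabla v_j|(z) = L_j^{-1}|\nabla u_j|(x_j + L_j^{-1}z) \le 1$, with $|\nabla v_j|(0)$ bounded below by a fixed positive constant; so the $v_j$ are uniformly Lipschitz on compact sets and $|\nabla v_{\infty}|(0) \neq 0$ for any limit. Second, the oscillation of $v_j$ on a fixed ball $B_R$ is at most $R \cdot \sup|\nabla v_j| \le R$, so after subtracting constants (which does not affect $\ddb$) the $v_j$ are uniformly bounded on compact sets. Third, and this is the decisive computation: $\de\dbar v_j (z) = L_j^{-2} (\de\dbar u_j)(x_j + L_j^{-1} z)$, and by hypothesis $(iii)$ together with the definition of $L_j$, $|\de\dbar u_j| \le K(1+L_j^2)$, so $|\de\dbar v_j| \le K L_j^{-2}(1+L_j^2) \to K$; more precisely it is bounded by $2K$ for $j$ large. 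Similarly the lower bound $(i)$ rescales to $\ddb v_j \ge -(\text{bounded}) \cdot (\omega_0\text{ in rescaled coords}) - KL_j^{-2}\alpha_j \ge -C\,\mathrm{Id}$ on compact sets, where the background forms $\omega_0, \alpha$ in the rescaled coordinates converge locally uniformly to the standard Euclidean form at $x_\infty$ (since rescaling zooms in). Hence on every fixed ball $B_R$ we have a uniform two-sided bound $-C\,\mathrm{Id} \le \sqrt{-1}\de\dbar v_j \le C\,\mathrm{Id}$, i.e. a uniform $C^{1,1}$ bound (in the sense of a bound on $|\de\dbar v_j|$) together with a uniform $C^{0,1}$ bound.

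\textbf{Extracting a limit and deriving the contradiction.} From the uniform bounds on $|\de\dbar v_j|$ and $|\nabla v_j|$ on compact sets, standard elliptic theory (the $v_j$ satisfy a uniformly elliptic — in fact, we only need the bound $\Delta v_j$ bounded — linear inequality, or one invokes the $W^{2,p}$ interior estimate for $0 \le \Delta v_j \le C$ after using $(i)$ to convert to a sub/supersolution statement) gives uniform local $W^{2,p}$ bounds for all $p$, hence local $C^{1,\beta}$ bounds by Sobolev embedding. Passing to a subsequence, $v_j \to v_\infty$ in $C^{1,\beta}_{loc}(\mathbb{C}^n)$, with $v_\infty$ globally Lipschitz, $|\nabla v_\infty| \le 1$ on all of $\mathbb{C}^n$, and $|\nabla v_\infty|(0) > 0$. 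Now I need to identify the limiting equation: the rescaled equation $\Theta_{\alpha_j}(\chi_j + \ddb v_j) = h(x_j + L_j^{-1}z)$ has $h(x_j + L_j^{-1}z) \to h(x_\infty)$ locally uniformly, and the eigenvalue matrix involves $L_j^{-2}\omega_j \to 0$ after rescaling... more carefully, one tracks that the \emph{linear at infinity} limit forces $v_\infty$ to be a harmonic function — or a (pluri)subharmonic function — on $\mathbb{C}^n$, via the eigenvalue sums degenerating. A bounded-gradient harmonic (or more generally, a $C^{1,1}$-type solution of the limiting degenerate elliptic equation with a sign condition) function on all of $\mathbb{C}^n$ is, by a Liouville-type theorem, affine; but an affine function has $|\nabla v_\infty|$ constant, and combined with $|\nabla v_\infty| \le 1$ and $|\nabla v_\infty|(0)>0$ this is not yet a contradiction — so one must instead use that $v_\infty$ is also \emph{bounded} on the directions transverse to its gradient, forcing it to be constant, contradicting $|\nabla v_\infty|(0) > 0$.

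\textbf{Main obstacle.} The delicate point is precisely the last step: pinning down the limiting object and its Liouville property. One must argue carefully that the limit $v_\infty$ inherits enough structure — bounded below Hessian plus the limiting equation — that a Liouville theorem applies and yields a constant, not merely an affine, limit; the boundedness of $v_j$ on compact sets (from the oscillation estimate) is what upgrades "affine" to "constant." Making the convergence of the equation rigorous (the background metrics and $h$ converging, the eigenvalue structure of $\chi_j + \ddb v_j$ under rescaling) and invoking the correct Liouville-type statement (for a subharmonic function on $\mathbb{C}^n$ with sublinear growth, or for the relevant degenerate equation) is where all the real content lies; the rest is the routine machinery of blow-up arguments. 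I expect this to parallel the blow-up gradient estimates of Dinew--Ko\l odziej and Sz\'ekelyhidi, but, as the text notes, to be substantially simpler because hypothesis $(iii)$ already encodes the borderline scaling and $(i)$ gives the one-sided Hessian control for free.
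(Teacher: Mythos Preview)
Your blow-up setup is correct and matches the paper's approach, but you have a genuine gap at the Liouville step, and it stems from not using hypothesis~{\it (ii)} properly.

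You only use {\it (ii)} implicitly, deriving instead that ``the oscillation of $v_j$ on a fixed ball $B_R$ is at most $R\cdot\sup|\nabla v_j|\le R$,'' which gives merely linear growth of the limit $v_\infty$. That is not enough: a nonconstant plurisubharmonic (or even pluriharmonic) function on $\mathbb{C}^n$ with linear growth certainly exists---take $\mathrm{Re}(z_1)$. So your argument, as written, does not reach a contradiction; this is exactly why you find yourself struggling to upgrade ``affine'' to ``constant'' in your final paragraph. Boundedness on compact sets never does that.

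The fix is immediate once you see it: the rescaling $\hat u_j(z)=u_j(x_j+L_j^{-1}z)$ does not change the \emph{values} of $u_j$, only the domain. Hence hypothesis~{\it (ii)} gives $|\hat u_j|\le K$ uniformly on the entire ball $B_{cL_j}(0)$, and the limit $v_\infty$ is globally bounded on $\mathbb{C}^n$. Meanwhile {\it (i)} rescales to $\ddb\hat u_j\ge -O(L_j^{-2})$, so in the limit $\ddb v_\infty\ge 0$ in the sense of distributions. Thus $v_\infty$ is a bounded, nonconstant plurisubharmonic function on $\mathbb{C}^n$, and no such function exists---that is the Liouville theorem you want. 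There is no need to track the limiting PDE, identify harmonic behaviour, or invoke $W^{2,p}$ theory; the $C^{1,\beta}$ compactness from the uniform $|\de\dbar\hat u_j|$ and $|\nabla\hat u_j|$ bounds suffices to pass to the limit.
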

\begin{proof}
We argue by contradiction.  Suppose we have a K\"ahler manifold $(X,\alpha)$ where the estimate fails.  Then we have smooth functions $u_{k}:X\rightarrow \mathbb{R}$, and a $(1,1)$ form $\omega_0$ such that the assumptions ${\it (i)-(iii)}$ hold uniformly in $n\in \mathbb{N}$, but 
\[
\sup_{X} |\nabla u_k| = C_k \geq k.
\]
Let $x_{k} \in X$ be a point where $\sup_{X} |\nabla u_k|$ is attained.  Up to passing to a subsequence we may assume that $\{x_{k}\}$ converges to some point $x\in X$.  In particular, we may assume that about each $x_k$ there is a coordinate chart $U_k \subset X$ with coordinates $(z_1,\dots,z_n)$ defined on a ball of radius $1$, centered at $x_k$, such that
\[
 \alpha(z) = Id + O(|z|^2)
 \]
 on $U_k$.  In particular, estimates {\it (i)-(iii)} hold uniformly on $B_1(0)$ with $\alpha$ replaced by the Euclidean metric, after possibly increasing $K$ slightly.  Define $\hat{u}_{k}(z) := u_{k}(\frac{z}{C_k})$, defined in the ball of radius $C_k$.  From properties {\it (i)-(iii)} and the above remark we have
\begin{itemize}
\item $\de\dbar\hat{u}_{k} \geq \frac{-KId-\omega_0}{C_k^{2}}$ for all $z\in B_{C_k}(0)$,
\item ${\rm osc}_{B_{C_k}(0)} \hat{u}_{k} \leq K$,
\item $|\de\dbar \hat{u}_{k}| \leq 2K$ for all $z \in B_{C_k}(0)$
\item $|\nabla \hat{u}_{k}(z)| \leq 1 = |\nabla \hat{u}_{k}(0)|$ for all $z \in B_{C_k}(0)$.
\end{itemize}
Since $C_{k} \rightarrow \infty$, a standard diagonal argument yields, for a fixed $\beta \in(0,1)$, the existence of a $C^{1,\beta}$ function $u_{\infty}:\mathbb{C}^{n} \rightarrow \mathbb{R}$ so that $\hat{u}_{j} \rightarrow u_{\infty}$ in $C^{1,\beta}$ topology on compact subsets.  Furthermore, by the above estimates $u$ is continuous, uniformly bounded, has $|\nabla u(0)| =1$, and satisfies $\ddb u \geq 0$ in the sense of distributions.  Hence, $u$ is bounded, non-constant plurisubharmonic function defined on all of $\mathbb{C}^{n}$.  By a standard result in several complex variables, no such functions exist \cite{Ron}.
\end{proof}

\section{Higher order estimates}

The higher order estimates follow from the Evans-Krylov theory.  The equation~\eqref{eq: Spec Lag Ang} is only concave when $h: X \rightarrow [(n-1) \frac{\pi}{2}, n\frac{\pi}{2})$, the so called {\em hypercritical phase} case.  However, as long as $h \geq (n-2) \frac{\pi}{2}$, we can exploit the convexity of the level sets $\del \Gamma^{\sigma}$ (see Lemma~\ref{lem: arithmetic} part {\it (iv)}) to obtain the $C^{2,\beta}$ estimates by a blow-up argument.  The first step in this direction is to prove a Louiville theorem.  The following proposition implies the complex analog of \cite[Theorem 1.1]{Y} except that we also assume a second derivative bound.  Let ${\rm Herm}(n)$ denote the space of $n\times n$ Hermitian matrices.

\begin{lem}\label{lem: Liouville}
Suppose $u:\mathbb{C}^{n} \rightarrow \mathbb{R}$ is a $C^{3}$ function satisfying
\[
F(\de\dbar u) = \sigma.
\]
where $F :{\rm Herm}(n) \rightarrow \mathbb{R}$ is smooth and elliptic.  Assume that the set 
\[
\Gamma^{\sigma}=\{ M \in {\rm Herm}(n): F(M) >\sigma\}
\]
is convex.  If $|\de\dbar u|_{L^{\infty}(\mathbb{C}^{n})} \leq K < +\infty$, then $u$ is a quadratic polynomial.
\end{lem}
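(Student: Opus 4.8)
The plan is to prove the Liouville theorem by a constant-rank/differentiation argument. Since $u \in C^3$ and $|\partial\bar\partial u| \le K$, each first partial derivative $v = \partial_e u$ (for a fixed real direction $e$) is a $C^2$ function solving the linearized equation. Indeed, differentiating $F(\partial\bar\partial u) = \sigma$ in the direction $e$ gives $F^{j\bar k}(\partial\bar\partial u)\,\partial_j\partial_{\bar k} v = 0$, where $F^{j\bar k}$ is positive definite by ellipticity and, crucially, has eigenvalues controlled above and below by constants depending only on $K$ (the bound $|\partial\bar\partial u| \le K$ confines $\partial\bar\partial u$ to a compact region of $\mathrm{Herm}(n)$, on which $F$ is uniformly elliptic). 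Thus $v$ is a solution of a uniformly elliptic linear equation with (merely measurable, but that is enough) coefficients on all of $\mathbb{C}^n \cong \mathbb{R}^{2n}$. First I would record that $v$ is bounded: this is where one must be slightly careful, since a priori we only control $\partial\bar\partial u$, not $\nabla u$; but a bound on $|\partial\bar\partial u|$ together with interior gradient estimates for the linear equation applied on balls of radius $R\to\infty$ — or alternatively a direct argument using that $u$ is, up to a quadratic, both plurisubharmonic-like and bounded in Hessian — forces each $\partial_e u$ to have at most linear growth, and then the Hessian bound plus Liouville for the linear equation pins it down. Let me instead run the cleanest route: apply the Evans–Krylov interior $C^{2,\alpha}$ estimate to $u$ on $B_R$, using convexity of the level set $\Gamma^\sigma = \{F > \sigma\}$ (this is exactly the hypothesis, and it is what lets Evans–Krylov run even though $F$ itself need not be concave — one works with the convex super-level set, as in Caffarelli–Nirenberg–Spruck / Yuan). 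This gives
\[
[\partial\bar\partial u]_{C^{\alpha}(B_{R/2})} \le \frac{C(n,K)}{R^{\alpha}}\bigl(\mathrm{osc}_{B_R}\,\partial\bar\partial u\bigr) \le \frac{C(n,K)\cdot K}{R^{\alpha}}.
\]
Letting $R \to \infty$ forces $\partial\bar\partial u$ to be constant, i.e. $\partial_j\partial_{\bar k} u \equiv c_{j\bar k}$ for a constant Hermitian matrix $(c_{j\bar k})$.

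Once the complex Hessian $\partial\bar\partial u$ is constant, it remains to upgrade to: $u$ is a genuine quadratic polynomial, meaning the full real Hessian is constant. Write $u = \sum c_{j\bar k} z_j \bar z_k + w$ where $w$ is pluriharmonic ($\partial\bar\partial w = 0$) on $\mathbb{C}^n$; then $w = \mathrm{Re}(f)$ for an entire holomorphic function $f$ on $\mathbb{C}^n$. I need to show $f$ is a polynomial of degree $\le 2$. The constraint is that $\partial\bar\partial u = (c_{j\bar k})$ is bounded — automatic — so this alone is not enough; the extra input is the original equation $F(\partial\bar\partial u) = \sigma$, which is now vacuous since $\partial\bar\partial u$ is constant, and the bound $|\partial\bar\partial u|\le K$, also vacuous. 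So pluriharmonic $w$ is genuinely unconstrained by the PDE. The resolution is that the statement "$u$ is a quadratic polynomial" should be read modulo pluriharmonic terms — OR, more likely, the intended reading is that after the blow-up in the $C^{2,\beta}$ estimate one only needs $\partial\bar\partial u$ constant, and "quadratic polynomial" is shorthand. I would state and prove the sharp fact: $\partial\bar\partial u$ is constant, hence $u$ differs from a quadratic $\mathbb{R}$-polynomial by a pluriharmonic function, and in the application (a normalized blow-up limit with a bound on $|\nabla u|$ or with $u$ vanishing suitably) the pluriharmonic part is also quadratic by the same Evans–Krylov/Liouville scaling applied to $w$ itself — but $w$ solves no equation, so one instead notes that in the blow-up argument $w$ inherits a Hessian bound and a normalization that forces it affine, or one simply carries the pluriharmonic ambiguity through, as it does not affect the eigenvalues $\lambda_i$ of $\partial\bar\partial u$ relative to a background metric that is itself being blown up.

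The main obstacle — and the step I would think hardest about — is justifying the Evans–Krylov estimate in this non-concave setting. The operator $F = \Theta$ is not concave when the phase is below hypercritical, so one cannot quote Evans–Krylov verbatim. The key observation, due to Yuan in the real case and invoked here via Lemma~\ref{lem: arithmetic}(iv), is that the super-level set $\Gamma^\sigma$ is convex; by a standard argument (replace $F$ by a concave defining function of the convex region $\Gamma^\sigma$, e.g. a signed-distance-type function or the support function, which has the same level set and hence the same solution set) one reduces to a concave uniformly elliptic equation to which Evans–Krylov applies, with constants depending only on $n$, $K$, $\sigma$, and the geometry (convexity constant, modulus of ellipticity) of $\Gamma^\sigma$ near the relevant compact piece $\{M : |M| \le K\} \cap \partial\Gamma^\sigma$. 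Making this reduction precise — in particular checking that the concave replacement of $F$ remains uniformly elliptic on the compact set where $\partial\bar\partial u$ lives, using convexity and smoothness of $\partial\Gamma^\sigma$ from Lemma~\ref{lem: arithmetic} — is the technical heart, after which the scaling $R\to\infty$ is immediate.
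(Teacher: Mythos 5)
Your diagnosis of the obstacle is exactly right---applying Evans--Krylov only to the complex Hessian yields $\partial\bar\partial u$ constant, and the remaining pluriharmonic piece $w = \mathrm{Re}(f)$ for $f$ entire is genuinely unconstrained by any complex-Hessian equation, so the route you ran cannot conclude the statement as written. Your suggested escapes (read the lemma modulo pluriharmonics, or push the ambiguity into the blow-up normalization) do not work: the lemma is invoked precisely to rule out blow-up limits, and a cubic pluriharmonic contribution to $u$, which makes $\partial\de\bar\partial u(0)\neq 0$, is exactly the sort of thing it must exclude. The lemma is nonetheless correct as stated; you are missing the one device the paper uses to close exactly this gap.

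The fix is to extend the concave replacement $F_0$ (built, as you correctly propose, from the convex super-level set $\Gamma^\sigma$) from ${\rm Herm}(n)$ to a concave, uniformly elliptic operator on \emph{all} of ${\rm Sym}(2n)$, and then invoke the \emph{real} Evans--Krylov theorem. Concretely: let $J$ be the complex structure, let $p(M) := \tfrac12(M + J^{T} M J)$ be the canonical trace- and order-preserving projection ${\rm Sym}(2n)\to{\rm Herm}(n)$, set $\hat F_0(N) := F_0(p(N))$, and replace $\hat F_0$ by its concave envelope $F_1$ over $\mathcal B_{2K} := p^{-1}(B_{2K})$, i.e.\ the infimum of affine linear $L$ on ${\rm Sym}(2n)$ with $DL\in\mathcal H_{\lambda,\Lambda}$ and $L\ge \hat F_0$ on $\mathcal B_{2K}$. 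Since $p$ is trace- and order-preserving, $F_1$ inherits uniform ellipticity with the same $\lambda,\Lambda$; it is concave by construction; and it agrees with $F_0\circ p$ where $D^2 u$ lives, so $F_1(D^2 u) = 0$ is a genuine real uniformly elliptic concave PDE. The classical Evans--Krylov interior estimate plus scaling then gives $[D^2 u]_{C^\beta(B_R)} \le C R^{-\beta}\|D^2 u\|_{L^\infty(B_{2R})}$, and sending $R\to\infty$ forces the \emph{full real} Hessian $D^2 u$ to be constant, eliminating the pluriharmonic degree of freedom you flagged. (This extension-by-projection trick is due to Y.~Wang and is also used by Tosatti--Wang--Weinkove--Yang. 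Note that your opening alternative---differentiate the equation and apply a linear Liouville theorem to $\partial_e u$---fails for the same reason: the linearized operator is $F^{j\bar k}\partial_j\partial_{\bar k}$, whose kernel again contains all pluriharmonics.)
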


The proof follows by combining the convexity of the level sets of the equation $F(\de\dbar u) =\sigma$ with an extension trick in order to apply the standard Evans-Krylov estimate.  The extension trick occurs in two steps.  First we find a concave elliptic operator $F_{0}(\cdot)$, such that $F_{0}(\de\dbar u)=0$ if and only if $F(\de\dbar u) =\sigma$.  Secondly, we use a trick due to Wang \cite{YW}, which was used also by Tosatti-Wang-Weinkove-Yang \cite{TWWY}, to extend $F_0$ to a {\rm real} uniformly elliptic concave operator, to which we apply the Evans-Krylov theory.  While we expect this is well-known to experts, we give the details for the readers' convenience.

\begin{proof}
Let ${\rm Sym}(2n)$ denote the space of real symmetric $2n \times 2n$ matrices.  Note that we have a canonical inclusion $\iota: {\rm Herm}(n) \hookrightarrow {\rm Sym}(2n)$, and so we will always regard ${\rm Herm}(n) \subset {\rm Sym}(2n)$.  Let $\mathcal{H}_{\la,\Lambda} \subset {\rm Sym}(2n)$ denote the set of symmetric matrices with eigenvalues lying in $[\la, \Lambda]$.  

As in \cite{Szek},  we define $F_0: {\rm Herm}(n) \rightarrow \mathbb{R}$ by
\begin{equation}
F_{0}(A) := \inf \left\{ t : \la(A) - t\cdot Id \in \overline{\Gamma}^{\sigma} \right\},
\end{equation}
where $\la(A)$ denotes the eigenvalues of $A$.  The reader can check that $F_0$ is a smooth, elliptic, non-linear operator on ${\rm Herm}(n)$.  The convexity of  $\Gamma^{\sigma}$ implies that $F_0(\cdot)$ is a concave operator. Furthermore, $F_0(\de\dbar u)=0$ if and only if $F(\de\dbar u) =\sigma$.  Consider the compact, convex set
\[
B_{2K} := \left\{ M \in {\rm Herm}(n) : \| M\| \leq 2K \right\}.
\] 
Since $F_0(\cdot)$ is smooth, and elliptic, and $B_{2K}$ is compact, $F_0(\cdot)$ is uniformly elliptic on $B_{2K}$. 

The next step is to extend $F_0$ to a uniformly elliptic, concave operator outside of $B_{2K}$.  We use an envelope trick due to Wang \cite{YW} (see also \cite{TWWY}). The complex structure $J$ on $\mathbb{C}^{n}$ gives a canonical projection $p: {\rm Sym}(2n) \rightarrow {\rm Herm}(n)$, by setting
\[
p(M) = \frac{M + J^{T}MJ}{2}.
\]
Define
\[
\mathcal{B}_{2K} := \left\{ N \in {\rm Sym}(2n) : p(N) \in B_{2K} \right\},
\]
and extend $F_0$ to a smooth, concave operator $\hat{F}_{0} :\mathcal{B}_{2K}\rightarrow \mathbb{R}$ by setting
\[
\hat{F}_0(N) := F_0(p(N)).
\]
We claim that $\hat{F}_{0}$ is uniformly elliptic on $\mathcal{B}_{2K}$.  This is just a matter of linear algebra.  First, observe that if $M \geq0$ is positive semi-definite, then so is $p(M)$, since, for any vector $v\in \mathbb{R}^{2n}$,
\[
\langle v, p(M)v \rangle = \frac{ \langle v, Mv\rangle + \langle Jv, MJv\rangle}{2}.
\]
Furthermore, we clearly have $\Tr(p(M)) = \Tr(M)$.  From these two facts the uniform ellipticity of $\hat{F}_0$ on $\mathcal{B}_{2K}$ easily follows from the uniform ellipticity of $F_0$ on $B_{2K}$.  Hence, there are constants $0< \la < \Lambda <+\infty$ such that, for all $A \in \mathcal{B}_{2K}$ the differential of $F_0$, denoted $DF_0$, at $A$ lies in $\mathcal{H}_{\la,\Lambda}$.  We define
\begin{equation}
\begin{aligned}
F_1(N) := \inf \bigg\{L(N) :\quad &L: {\rm Sym}(2n) \rightarrow \mathbb{R} \text{ affine linear },\\
&DL \in \mathcal{H}_{\la, \Lambda}, \text{ and } L(A) \geq \hat{F}_0(A),\,\,\, \forall A \in \mathcal{B}_{2K}\bigg\}
\end{aligned}
\end{equation}
where $DL$ denotes the differential of $L$.  In words,  $F_1$ is the concave envelope of the graph of $\hat{F}_{0}$ over $\mathcal{B}_{2K}$.  As in \cite[Lemma 4.1]{TWWY}  it is straightforward to check that $F_1: {\rm Sym}(2n) \rightarrow \mathbb{R}$ is uniformly elliptic, concave and agrees with $\hat{F}_{0}$ over $\mathcal{B}_{2K}$.  Since $\de\dbar u: \mathbb{C}^{n} \rightarrow \mathcal{B}_{2K}$ we have
\[
F_1(D^2 u) =0.
\]
By the Evans-Krylov theorem \cite{E,K}, \cite[Theorem 6.1]{CC} and a standard scaling argument we have; for some $\beta = \beta (n,\la, \Lambda) \in(0,1)$ and for every $R>0$ there holds
\[
|D^2u|_{C^{\beta}(B_R(0))} \leq C(n,\la,\Lambda)R^{-\beta} \|D^2u\|_{L^{\infty}(B_{2R}(0))} \leq C(n,\la, \Lambda)R^{-\beta}K.
\]
Letting $R \rightarrow +\infty$ we get the result.
\end{proof}

We use this Liouville type result to conclude $C^{2,\beta}$ estimates by a blow-up argument.

\begin{lem}\label{lem: EK est}
Suppose $u: B_2 \subset \mathbb{C}^{n} \rightarrow \mathbb{R}$ is a smooth function satisfying 
\[
F(x,\de\dbar u) =h(x),
\]
for some smooth map $F: B_2 \times {\rm Herm}(n) \rightarrow \mathbb{R}$.  Suppose that $F(x,\cdot)$ is uniformly elliptic on $B_2 \times \de\dbar u(B_{2})$ with ellipticity constant $0<\la<\Lambda <+\infty$.  Assume $h:B_{2} \rightarrow [a,b]$ is $C^{2}$ and, for every $\sigma \in[a,b]$ and $x\in B_2$ the set $\Gamma^{\sigma} :=\{ M \in {\rm Herm}(n) : F(x,M)>\sigma \}$ is convex.  Then, for every $\beta \in(0,1)$ we have the estimate
\[
|\de \dbar u|_{C^{\beta}(B_{1/2})} \leq C(n, \beta, \la, \Lambda, |\de\dbar u|_{L^{\infty}(B_{2})}, \|h\|_{C^{2}(B_{2})}).
\]
\end{lem}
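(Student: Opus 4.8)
The plan is to prove Lemma~\ref{lem: EK est} by a blow-up (contradiction) argument, reducing the local estimate with $x$-dependent, merely-convex-level-set operator to the constant-coefficient Liouville statement of Lemma~\ref{lem: Liouville}. Suppose the estimate fails for a fixed $\beta \in (0,1)$. Then there exist smooth functions $u_k : B_2 \to \mathbb{R}$, operators $F_k$, and data $h_k : B_2 \to [a,b]$ satisfying all the stated hypotheses with uniform constants $\la, \Lambda$, uniform bounds on $|\de\dbar u_k|_{L^\infty(B_2)}$ and $\|h_k\|_{C^2(B_2)}$, but with $[\de\dbar u_k]_{C^\beta(B_{1/2})} \to \infty$. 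A standard argument using the $C^\beta$ difference quotients (as in, e.g., Simon's book or the blow-up arguments of \cite{Szek, TWWY}) produces points $x_k, y_k$ and scales $r_k \to 0$ that realize (up to a constant) the $C^\beta$ seminorm, and we rescale: set $\tilde u_k(z) = r_k^{-2} (u_k(x_k + r_k z) - \ell_k(z))$ where $\ell_k$ is a suitable first-order polynomial removing the gradient, so that the rescaled functions have $|\de\dbar \tilde u_k|$ bounded uniformly (by the $L^\infty$ bound and interpolation), have $[\de\dbar \tilde u_k]_{C^\beta}$ normalized to equal $1$ on unit balls, and are defined on balls exhausting $\mathbb{C}^n$.

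Next I would pass to a limit. By the uniform $C^{2,\beta'}$ bounds for $\beta' < \beta$ coming from the normalization (and a diagonal/Arzelà–Ascoli argument), a subsequence $\tilde u_k \to u_\infty$ in $C^{2,\beta'}_{loc}(\mathbb{C}^n)$, with $u_\infty$ a $C^{2,\beta}$ function whose Hessian has $[\de\dbar u_\infty]_{C^\beta} \geq c_0 > 0$ on some unit ball — in particular $u_\infty$ is \emph{not} a quadratic polynomial. Meanwhile the operators $F_k(x_k + r_k z, \cdot)$ converge (after passing to a further subsequence, using the uniform $C^2$ control and the compactness of $\de\dbar \tilde u_k$ in a fixed compact set of ${\rm Herm}(n)$) to a limiting operator $F_\infty$ which is smooth, uniformly elliptic with the same constants $\la, \Lambda$, and \emph{independent of $z$}, and $h_k(x_k + r_k z) \to \sigma_\infty \in [a,b]$ a constant. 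Crucially, the hypothesis that $\Gamma^\sigma = \{M : F_k(x,M) > \sigma\}$ is convex for all $x$ and all $\sigma \in [a,b]$ passes to the limit: the level set $\{M : F_\infty(M) > \sigma_\infty\}$ is convex (convexity is preserved under locally uniform limits). Hence $F_\infty(\de\dbar u_\infty) = \sigma_\infty$ with $F_\infty$ smooth, elliptic, convex super-level set, and $|\de\dbar u_\infty|_{L^\infty(\mathbb{C}^n)} < \infty$.

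Then Lemma~\ref{lem: Liouville} applies directly and forces $u_\infty$ to be a quadratic polynomial, contradicting $[\de\dbar u_\infty]_{C^\beta} \geq c_0 > 0$. This contradiction proves the lemma. The main obstacle — and the step requiring the most care — is the blow-up bookkeeping: one must choose the rescaling so that the $C^\beta$ seminorm is exactly normalized while the lower-order data ($L^\infty$ Hessian bound, gradient subtraction, oscillation) stay controlled, and one must verify that the limiting operator $F_\infty$ retains smoothness, the ellipticity constants, \emph{and} the convexity of its super-level set; the $x$-dependence of $F$ is harmless precisely because blowing up at a point freezes the coefficients, but checking that $F_k(x_k + r_k \cdot, \cdot) \to F_\infty(\cdot)$ in $C^2_{loc}$ on the relevant matrix domain (using $\|h\|_{C^2}$ and $|\de\dbar u|_{L^\infty}$ to get uniform control on $F$ and its derivatives along the sequence) needs to be done carefully. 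Everything else is the now-standard Evans–Krylov-via-blow-up machinery.
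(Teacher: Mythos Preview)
Your blow-up strategy is correct in spirit and leads to the same Liouville reduction, but it is \emph{not} the route the paper takes, and the differences are worth noting.

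The paper blows up the weighted third-order quantity $N_u := \sup_{B_1} d_x\,|\de\de\dbar u|(x)$ rather than the $C^\beta$ seminorm of $\de\dbar u$. At the point $x_0$ where $N_u$ is achieved, one rescales $\tilde u(z) = (N_u/d_{x_0})^2 u(x_0 + (d_{x_0}/N_u)z)$ so that $|\de\de\dbar\tilde u|\le 1$ on $B_{N_u}$ with equality at the origin. One then \emph{differentiates} the rescaled equation to obtain a linear uniformly elliptic equation for $\de_\ell\tilde u$ with $C^\alpha$ coefficients and right-hand side $O(N_u^{-1})$, and interior Schauder theory yields uniform $C^{3,\alpha}$ bounds. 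Passing to the limit along a sequence with $N_{u_k}\to\infty$ produces a $C^3$ entire solution $u_\infty$ of $F(x_\infty,\de\dbar u_\infty)=h(x_\infty)$ with $|\de\de\dbar u_\infty|(0)=1$; Lemma~\ref{lem: Liouville} then gives the contradiction.

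Compared with your approach, the paper's choice buys two things. First, the blow-up is at a single point, so the ``two-point $C^\beta$ bookkeeping'' you flag as the main obstacle simply does not arise. Second, the limit is automatically $C^3$, so Lemma~\ref{lem: Liouville} --- which is stated for $C^3$ functions --- applies directly. In your scheme the limit $u_\infty$ is only $C^{2,\beta}$ a priori, and you would need an extra bootstrap (Evans--Krylov plus Schauder on the constant-coefficient limit equation) before invoking the Liouville lemma as written; you should say this explicitly. A smaller point: in the lemma $F$ is a \emph{fixed} smooth map, so there is no need to introduce a sequence of operators $F_k$ or to worry about their convergence --- the frozen-coefficient limit is just $F(x_\infty,\cdot)$, and the convexity of its super-level sets is immediate from the hypothesis.
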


\begin{proof}
The proof is by a standard blow-up argument; see, for instance \cite{Co}.  We give the details for the convenience of the reader.  For each $x \in B_1$ consider the quantity
\[
N_u := \sup_{B_1}d_x |\de \de\dbar u|(x)
\]
where $d_x := {\rm dist}(x,\del B_1)$.  Suppose the supremum is achieved at $x_0 \in B_1$.  Consider the function $\ti u: B_{N_u}(0) \rightarrow R$ defined by
\[
\ti u(z) := \frac{N_u^2}{d_{x_0}^2}u\left(x_0+ \frac{d_{x_0}}{N_u}z\right) - A - A_iz_i
\]
where $A, A_i$ are chosen so that $\ti u(0) = \de \ti u(0) = 0$.  Note that
\[
\de\dbar \ti u = \de\dbar u, \qquad \|\de\de\dbar u\|_{L^{\infty}(B_{N_u}(0))} = |\de\de\dbar u(0)| =1.
\]
In particular, we have $|\de\dbar u|_{C^{\beta}(B_1)} \leq 1$ for every $\beta \in (0,1)$ and $\ti u$ solves
\[
F(x_0+ \frac{d_{x_0}}{N_u}z, \de\dbar \ti u(z)) = h\left(x_0+ \frac{d_{x_0}}{N_u}z\right), \qquad z\in B_{N_{u}}(0).
\]
Differentiating the equation in the $\de_\ell$ direction yields
\[
F^{i\bar{j}}(x_0+ \frac{d_{x_0}}{N_u}z,\de\dbar \ti u) \de_i\de_{\bar{j}} \de_\ell \ti u = \frac{d_{x_0}}{N_u}h'\left(x_0+ \frac{d_{x_0}}{N_u}z\right). 
\]
Since $F(x,\cdot)$ is uniformly elliptic and $h$ is smooth, the Schauder theory implies $\de \ti u$ is bounded in $C^{2,\beta}(B_{N_{u}/2}(0))$, and so $\ti u$ is controlled in $C^{3,\beta}(B_{N_{u}/2}(0))$.

Now, for the sake of finding a contradiction, suppose we have:
\begin{itemize}
\item a sequence $u_{n}: B_{2} \rightarrow \mathbb{R}$ such that $\|\de\dbar u_{n}\|_{L^{\infty}(B_2)} \leq K$, but so that $N_{u_{n}} \geq n$
\item functions $h_{n}:B_{2} \rightarrow [a,b]$ such that $\|h_n\|_{C^{2}(B_2)} \leq K$
\end{itemize}
For each $n$ let $x_{n} \in B_{1}$ be a point where $N_{u_{n}}$ is achieved.  By compactness, after passing to a subsequence (not relabelled) we may assume that:
\begin{itemize}
 \item $x_n \rightarrow x_{\infty} \in \overline{B_{1}}$.
 \item $h_{n}$ converges to some function $h$ uniformly in $C^{1,\beta'}$ topology on $B_{3/2}$ for some fixed $\beta' \in (0,1)$.
 \end{itemize}
 By the above rescaling we find functions $\ti u_{n} :B_{N_{u_{n}}}(0) \rightarrow \mathbb{R}$ such that 
 \begin{itemize}
 \item $\|\ti u_{n}\|_{C^{3,\beta}(B_{N_{u_{n}}}(0)) } \leq C$ and
\item $
F(x_n + \frac{d_{x_n}}{N_{u_{n}}}z,\de\dbar \ti u_{n}) =h_n\left(x_n + \frac{d_{x_n}}{N_{u_{n}}}z\right) \qquad z \in B_{N_{u_{n}}}(0).
$
\end{itemize}
Since $N_{u_{n}} \geq n$, a diagonal argument yields the existence of a function $u: \mathbb{C}^{n} \rightarrow \mathbb{R}$ and a subsequence (again, not relabelled) such that $\{u_{n}\}_{n\geq k}$ converges uniformly to $u$ in $C^{3,\alpha/2}(B_{k}(0))$.  In particular, we have
\[
F(x_0,\de\dbar u) = h(x_0), \qquad |\de\de\dbar u|(0)=1.
\]  
Clearly $h(x_0) \in [a,b]$, and so we may apply Lemma~\ref{lem: Liouville} to conclude that $u$ is a quadratic polynomial, which is a contradiction.  
\end{proof}

By arguing locally, Lemma~\ref{lem: EK est} immediately implies the following corollary, whose proof we leave to the reader, and finishes the proof of Theorem~\ref{thm: est thm}.

\begin{cor}
Suppose $u: X\rightarrow \mathbb{R}$ is a solution of
\[
\Theta_{\alpha}(\omega+\ddb u) = h(x)
\]
where $h(x)\geq (n-2)\frac{\pi}{2}+\epsilon$ for some $\epsilon>0$.  Then for every $\beta \in (0,1)$ we have the estimate
\[
|\de \dbar u|_{C^{\beta}(X)} \leq C(n, X,\alpha, \beta, \|h\|_{C^2}, \|\de\dbar u\|_{L^{\infty}(X)})
\]
\end{cor}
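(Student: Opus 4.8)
The plan is to deduce the global $C^{2,\beta}$ estimate from the interior estimate of Lemma~\ref{lem: EK est} by a finite covering argument, once the specified Lagrangian phase operator has been written in the local form required there.

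First I would cover $X$ by finitely many $\alpha$-coordinate balls $\{B_2(p_k)\}_{k=1}^{N}$, shrunk so that the concentric balls $B_{1/2}(p_k)$ still cover $X$. On each chart, write the equation as $F(x,\ddb u) = h(x)$, where
\[
F(x,M) := \sum_{i=1}^{n}\arctan\bigl(\la_i(\alpha(x)^{-1}(\omega_0(x)+M))\bigr) = \widetilde\Theta\bigl(\alpha(x)^{-1/2}(\omega_0(x)+M)\alpha(x)^{-1/2}\bigr),
\]
with $\widetilde\Theta(N) := \sum_i \arctan(\la_i(N))$ on ${\rm Herm}(n)$. Since $\alpha(x)$ is smooth and positive definite and $\omega_0(x)$ is smooth, $F$ is a smooth map $B_2\times {\rm Herm}(n)\to\mathbb{R}$, and on the compact piece of ${\rm Herm}(n)$ where $\ddb u$ takes values (which is compact by the $L^\infty$ bound) it is uniformly elliptic; more precisely, the linearization of $F(x,\cdot)$ at $\ddb u$ is conjugate to $\eta^{j\bar k}=(Id+A^2)^{-1}$ with $A=\alpha(x)^{-1/2}(\omega_0(x)+\ddb u)\alpha(x)^{-1/2}$, and the hypothesis $\|\ddb u\|_{L^\infty(X)}\le K$ bounds the $\la_i$, hence confines the eigenvalues of $\eta$ to a fixed interval $[\la,\Lambda]\subset(0,\infty)$ depending only on $(X,\alpha)$, $\omega_0$ and $K$ — exactly the observation recorded in Section~\ref{background}.

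Next I would check the convexity hypothesis of Lemma~\ref{lem: EK est}. Because $h(x)\geq (n-2)\frac{\pi}{2}+\epsilon$, for every $\sigma$ in the range of $h$ the eigenvalue cone $\Gamma^\sigma\subset\mathbb{R}^n$ is convex by Lemma~\ref{lem: arithmetic}(iv); by the classical fact that a subset of ${\rm Herm}(n)$ invariant under unitary conjugation is convex precisely when its set of eigenvalue vectors is convex and symmetric, the set $\{N\in{\rm Herm}(n):\widetilde\Theta(N)>\sigma\}$ is convex, and hence so is its preimage $\{M:F(x,M)>\sigma\}$ under the affine bijection $M\mapsto\alpha(x)^{-1/2}(\omega_0(x)+M)\alpha(x)^{-1/2}$, affine preimages of convex sets being convex. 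Thus all hypotheses of Lemma~\ref{lem: EK est} are in force on each $B_2(p_k)$.

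Finally, Lemma~\ref{lem: EK est} applied on each $B_2(p_k)$ gives $|\ddb u|_{C^\beta(B_{1/2}(p_k))}\le C(n,\beta,\la,\Lambda,\|\ddb u\|_{L^\infty},\|h\|_{C^2})$; taking the maximum over the $N$ charts and converting the Euclidean H\"older norms back to the $\alpha$-H\"older norm yields the stated global estimate. I do not anticipate a serious obstacle here — this corollary is essentially bookkeeping, the real difficulty having been the $C^2$ estimate of Theorem~\ref{thm: C2 estimate}. The only points deserving care are tracking the dependence on the fixed chart transition data inside the constant, and verifying that the convexity hypothesis really survives the $x$-dependent affine change of variables, which it does since convexity is affine-invariant and the range of $h$ stays in the supercritical regime where Lemma~\ref{lem: arithmetic}(iv) applies. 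Combined with Proposition~\ref{prop: C0 est}, the gradient estimate of Section~5, and Theorem~\ref{thm: C2 estimate}, this completes the proof of Theorem~\ref{thm: est thm}.
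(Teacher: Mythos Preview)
Your proposal is correct and follows precisely the approach the paper indicates: the paper's entire ``proof'' is the sentence ``By arguing locally, Lemma~\ref{lem: EK est} immediately implies the following corollary, whose proof we leave to the reader,'' and you have supplied exactly those omitted details --- the finite cover, the local form $F(x,\ddb u)=h(x)$, the uniform ellipticity from the $L^\infty$ bound, and the verification that the superlevel sets remain convex under the $x$-dependent affine change of variables. The one point worth a citation rather than the phrase ``classical fact'' is the lift of convexity from the symmetric set $\Gamma^\sigma\subset\mathbb{R}^n$ to $\{N\in{\rm Herm}(n):\la(N)\in\Gamma^\sigma\}$; this is standard spectral convex analysis (e.g.\ Davis's theorem or Lewis's work on spectral functions) and is used implicitly in the paper's own proof of Lemma~\ref{lem: Liouville}.
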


\section{The Method of Continuity and the proof of Theorem~\ref{thm: existence thm}}

In this section we prove Theorem~\ref{thm: existence thm}, using the method of continuity.  Unfortunately, the naive method of continuity does not work due essentially to the fact that the subsolution condition is non-trivial; for related discussion see \cite{Szek}.  Instead, adapting an idea of Sun \cite{WS} in the setting of the $J$-equation, the proof of Theorem~\ref{thm: existence thm} requires two applications of the method of continuity.  Let us first prove openness along a general method of continuity. 
\begin{lem}\label{lem: openness}
Fix $k \geq 2, \beta \in (0,1)$ and suppose we have $C^{k-2,\beta}$ functions $H_0, H_1 :X \rightarrow \mathbb{R}$, and a $C^{k,\beta}$ function $u: X\rightarrow \mathbb{R}$ such that
\[
\Theta_{\alpha}(\omega +\ddb u) = H_0.
\]
Consider the family of equations
\begin{equation}\label{eq: general MOC}
\Theta_{\alpha}(\omega +\ddb u_t) = (1-t)H_0 + tH_1+c_t
\end{equation}
for $c_t$ a constant.  There exists $\epsilon>0$ such that, for every $|t|<\epsilon$ a unique pair $(u_t,c_t) \in C^{k,\beta} \times \mathbb{R}$ solving \eqref{eq: general MOC}.  Furthermore, if $H_0, H_1$ are smooth, then so is $u_t$. 
\end{lem}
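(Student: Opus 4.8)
The plan is to realize the family \eqref{eq: general MOC} as the zero set of a smooth map of Banach spaces and apply the implicit function theorem, with the constant $c_t$ playing the role of a Lagrange multiplier that compensates for the $1$-dimensional cokernel of the linearized operator. Since \eqref{eq: general MOC} depends on $u_t$ only through $\ddb u_t$, we may assume $\int_X u\,\alpha^n=0$. Set $A_t:=(1-t)H_0+tH_1$, let $\mathcal{B}:=\{v\in C^{k,\beta}(X,\mathbb{R}):\int_X v\,\alpha^n=0\}$, and define
\[
\Psi:\mathcal{B}\times\mathbb{R}\times(-1,1)\longrightarrow C^{k-2,\beta}(X,\mathbb{R}),\qquad \Psi(v,c,t):=\Theta_{\alpha}(\omega+\ddb v)-A_t-c .
\]
Writing $\Theta_{\alpha}(\omega+\ddb v)={\rm Im}\log\det\big(\alpha+\sqrt{-1}(\omega+\ddb v)\big)-{\rm Im}\log\det\alpha$ exhibits it as a fixed smooth function of the entries of $\ddb v$, so (composition operators with smooth functions being smooth on Hölder spaces) $\Psi$ is smooth, with $\Psi(u,0,0)=0$. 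By \eqref{eq: derivtheta}--\eqref{eq: lin op} the partial differential of $\Psi$ in $(v,c)$ at $(u,0,0)$ is
\[
L(w,\dot c)=\Delta_{\eta}w-\dot c,\qquad \Delta_{\eta}=\eta^{j\bar k}\del_j\del_{\bar k},
\]
with $\eta$ built from $\omega+\ddb u$ as in Section~\ref{background}; since $\eta$ is positive definite, $\Delta_{\eta}$ is a uniformly elliptic second-order operator with no zeroth-order term (not in divergence form, as $\eta$ is non-K\"ahler).

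The heart of the argument is that $L:\mathcal{B}\times\mathbb{R}\to C^{k-2,\beta}(X,\mathbb{R})$ is an isomorphism. Injectivity is the maximum principle: $\Delta_{\eta}w=\dot c$ forces $\dot c=0$ by testing at a maximum and a minimum of $w$, whence $w$ is constant, hence $0$ by the normalization. For surjectivity, $\Delta_{\eta}$ is Fredholm of index zero between $C^{k,\beta}$ and $C^{k-2,\beta}$ — its principal symbol is joined to that of the $\alpha$-Laplacian through the Laplacians of the positive Hermitian forms $s\eta+(1-s)\alpha$ — and since $\ker\Delta_{\eta}$ is the constants, ${\rm Im}(\Delta_{\eta})$ is closed of codimension exactly one. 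As the constant function $1$ is not in ${\rm Im}(\Delta_{\eta})$ (at a maximum of a putative solution of $\Delta_{\eta}w=1$ the left-hand side is $\le 0$), we get $C^{k-2,\beta}={\rm Im}(\Delta_{\eta})\oplus\mathbb{R}\cdot 1$, and surjectivity of $L$ follows: decompose $f=\Delta_{\eta}w_0+a$, normalize $w_0$ by $\int_X w_0\,\alpha^n=0$, and set $\dot c=-a$. The implicit function theorem then furnishes $\epsilon>0$ and a smooth curve $t\mapsto(u_t,c_t)\in\mathcal{B}\times\mathbb{R}$, $|t|<\epsilon$, with $\Psi(u_t,c_t,t)=0$ and $(u_0,c_0)=(u,0)$, unique near $(u,0)$.

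It remains to upgrade to global uniqueness and to handle regularity. If $(u_t,c_t)$ and $(u_t',c_t')$ both solve \eqref{eq: general MOC}, then comparing the equations at a maximum of $u_t-u_t'$ and using that $\Theta_{\alpha}$ is monotone in the endomorphism variable yields $c_t\le c_t'$, and symmetry gives $c_t=c_t'$; writing the difference as $\int_0^1\tfrac{d}{ds}\Theta_{\alpha}\big(\omega+\ddb(su_t+(1-s)u_t')\big)\,ds=0$ shows $u_t-u_t'$ solves a uniformly elliptic equation with no zeroth-order term, hence is constant. So $c_t$ is unique and $u_t$ unique up to an additive constant (genuinely unique under $\int_X u_t\,\alpha^n=0$). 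Finally, the implicit function theorem gives $u_t\in C^{k,\beta}$; when $H_0,H_1$ (and the fixed data $\alpha,\omega$) are smooth, $A_t$ is smooth, and the usual difference-quotient and Schauder bootstrap applied to the uniformly elliptic equation $\Theta_{\alpha}(\omega+\ddb u_t)=A_t+c_t$ shows $u_t\in C^{\infty}$. The one genuinely delicate point is the surjectivity of $L$: this is precisely why \eqref{eq: general MOC} is posed with a free constant, and it hinges on $\Delta_{\eta}$ — though not self-adjoint, $\eta$ being non-K\"ahler — being Fredholm of index zero with kernel the constants, together with the elementary fact that no constant lies in its image; everything else is routine.
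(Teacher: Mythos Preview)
Your proof is correct and follows essentially the same approach as the paper: set up the family as the zero set of a smooth map between H\"older spaces, identify the linearization as $\Delta_\eta$ together with the free constant, use index theory (homotopy to the $\alpha$-Laplacian) to see $\Delta_\eta$ has index zero, the maximum principle to identify its kernel with the constants and to show constants lie outside its image, and then invoke the implicit function theorem plus Schauder bootstrapping. Your version is in fact somewhat more detailed than the paper's---you normalize the domain to make $L$ an isomorphism rather than merely surjective, and you supply an explicit global uniqueness argument---but the strategy is identical.
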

\begin{proof}
The proof is by the implicit function theorem. Fix $\beta>0$, $k \geq 2$ and consider the map $F : [0,1] \times C^{k,\beta}\times \mathbb{R} \rightarrow C^{k-2,\beta}$ given by
\[
(t,c,u) \longmapsto \Theta_{\alpha}(\omega +\ddb u) -(1-t)H_0 -tH_1-c.
\]
Let $\Delta_\eta$ denote the linearization of $\Theta_{\alpha}$ around $(u_0, c_0) := (u,0)$.  The operator $\Delta_{\eta}$ is homotopic to the Laplacian with respect to $\alpha$, and so has index $0$.  By the maximum principle, the kernel of $\Delta_{\eta}$ consists of the constants, and hence the cokernel of $\Delta_{\eta}$ has dimension $1$.  Another application of the maximum principle shows that the constants are not in the image of $\Delta_{\eta}$.  It follows that the linearization of $F$ at time $0$, given by
\[
(v,c) \longmapsto \Delta_{\eta}v +c,
\]
is a surjective map from $C^{k,\beta} \times \mathbb{R}$ to $C^{k-2,\beta}$.  In particular, by the implicit function theorem we conclude that there exists $\epsilon>0$ such that, for all $|t|<\epsilon$ we can find a unique pair $(u_t,c_t) \in C^{k,\beta}\times \mathbb{R}$ solving~\eqref{eq: general MOC}.  By a standard boot strapping argument, we find that $u_{t}$ is in fact smooth provided $H_0, H_1$ are smooth. 
\end{proof}

Suppose now that we have a subsolution $\chi \in [\Omega]$ to the deformed Hermitian-Yang-Mills equation satisfying the assumptions of Theorem~\ref{thm: existence thm}.  Let us denote by
\[
\Theta_0 := \Theta_{\alpha}(\chi).
\]
Without loss of generality, we will assume that $\Theta_0 \ne \hat{\Theta}$, for otherwise we are finished.  Now, and for the remainder of this section, we let $\mu_1,\cdots \mu_n$ be the eigenvalues of the relative endomorphism $\alpha^{-1}\chi$ at an arbitrary point of $X$.  We clearly have
\[
\sum_{i\ne j} \arctan(\mu_i) > \Theta_0 -\frac{\pi}{2} \qquad \forall j.
\]
In particular, we can find $\delta_0 >0$ such that
\[
\sum_{i\ne j} \arctan(\mu_i) > \max\{\Theta_0, \hat{\Theta}\} + 100\delta_0 -\frac{\pi}{2} \qquad \forall j.
\]
Furthermore, since 
\[
{\rm Arg} \int_{X} (\alpha+ \sqrt{-1}\chi)^{n} = \hat{\Theta}
\]
we must have that $\inf_X\Theta_0 < \hat{\Theta}$.  Choose $\delta_1>0$ such that
\[
\inf_X \Theta_0 + 100 \delta_1 = \hat{\Theta}
\]
Set $\delta = \min\{ \delta_0, \delta_1\}$, and define
\[
\Theta_1 = \widetilde{\max}_{\delta} \{\hat{\Theta}, \Theta_0\}
\]
where $\widetilde{\max}_{\delta}$ denotes the regularized maximum \cite{DemB}.  We have
\begin{lem}\label{lem: key props MOC}
Fix a point $p \in X$ where $\Theta_0$ achieves its infimum.  The function $\Theta_1$ has the following properties:
\begin{enumerate}
\item[{\it (i)}]  $\Theta_1$ is smooth.
\item[{\it (ii)}]  $\max\{\Theta_0, \hat{\Theta}\} \leq \Theta_1 \leq  \max\{\Theta_0, \hat{\Theta}\}+\delta$.
\item[{\it (iii)}]  $\Theta_1(x) = \hat{\Theta}$ on the set $\{x \in X : \Theta_0+\delta \leq \hat{\Theta}-\delta\}$.  In particular, $\Theta_1(x) = \hat{\Theta}$ in a neighbourhood of $p \in X$.
\item[{\it (iv)}]   $\Theta_1(x) = \Theta_0(x)$ on the set $\{x \in X : \hat{\Theta}+\delta \leq \Theta_0-\delta\}$.
\item[{\it (v)}] For every $t \in [0,1]$ 
\[
\inf_{X}[(1-t)\Theta_0 +t\Theta_1] = (1-t)\inf_{X} \Theta_0 +t \hat{\Theta} = (1-t)\Theta_0(p) + t\hat{\Theta}.
\]
\item[{\it (vi)}]  $\sup_{X} [\Theta_1 -\Theta_0]= \Theta_1(p) -\Theta_0(p) = \hat{\Theta} - \inf_X\Theta_0(p)$.
\end{enumerate}
\end{lem}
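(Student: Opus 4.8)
The plan is to derive all six items from the standard properties of Demailly's regularized maximum $\widetilde{\max}_{\delta}$ (see \cite{DemB}), combined with the two defining choices made above: that $\delta_0$ is a subsolution margin and that $\delta_1$ was chosen so that $\inf_X \Theta_0 + 100\delta_1 = \hat{\Theta}$, together with $\delta = \min\{\delta_0, \delta_1\}$. The properties I would invoke are: (a) $\widetilde{\max}_{\delta}$ is smooth; (b) the sandwich bound $\max\{s,t\} \le \widetilde{\max}_{\delta}\{s,t\} \le \max\{s,t\} + \delta$; (c) $\widetilde{\max}_{\delta}\{s,t\} = \max\{s,t\}$ whenever $|s-t| \ge 2\delta$; and (d) $\widetilde{\max}_{\delta}$ is non-decreasing in each argument and satisfies $\widetilde{\max}_{\delta}\{s+a,t+a\} = \widetilde{\max}_{\delta}\{s,t\} + a$, so its two partial derivatives lie in $[0,1]$ and sum to $1$.

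First I would observe that $\Theta_0 = \Theta_{\alpha}(\chi)$ is a genuinely smooth function of $x \in X$, being a smooth symmetric function of the eigenvalues of the smooth endomorphism $\alpha^{-1}\chi$; combined with (a) this gives (i), and (ii) is just (b) applied to $(s,t) = (\hat{\Theta}, \Theta_0(x))$. For (iii), if $x$ satisfies $\Theta_0(x) + \delta \le \hat{\Theta} - \delta$ then $\hat{\Theta} - \Theta_0(x) \ge 2\delta$, so (c) forces $\Theta_1(x) = \hat{\Theta}$; and since $\delta \le \delta_1$ and $\hat{\Theta} = \Theta_0(p) + 100\delta_1$ we get $\hat{\Theta} - \Theta_0(p) = 100\delta_1 > 2\delta$, so $p$ lies in the interior of this set by continuity of $\Theta_0$, giving the ``in particular'' claim. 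Item (iv) is the mirror statement: on $\{\hat{\Theta} + \delta \le \Theta_0 - \delta\}$ one has $\Theta_0(x) - \hat{\Theta} \ge 2\delta$, and (c) again gives $\Theta_1(x) = \Theta_0(x)$.

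For (v) and (vi) the point is to locate the relevant extremum. For (v): by (ii), $\Theta_1(x) \ge \max\{\Theta_0(x), \hat{\Theta}\} \ge \hat{\Theta}$, and trivially $\Theta_0(x) \ge \Theta_0(p) = \inf_X \Theta_0$, so for any $t \in [0,1]$, since $1-t \ge 0$ and $t \ge 0$,
\[
(1-t)\Theta_0(x) + t\Theta_1(x) \ge (1-t)\Theta_0(p) + t\hat{\Theta} \qquad \text{for all } x \in X,
\]
while equality holds at $x = p$ because $\Theta_1(p) = \hat{\Theta}$ by (iii); hence the infimum equals $(1-t)\Theta_0(p) + t\hat{\Theta} = (1-t)\inf_X \Theta_0 + t\hat{\Theta}$. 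For (vi): write $\Theta_1(x) - \Theta_0(x) = g(\Theta_0(x))$ with $g(s) := \widetilde{\max}_{\delta}\{\hat{\Theta}, s\} - s$; by (d), $g'(s) = \partial_2 \widetilde{\max}_{\delta}\{\hat{\Theta}, s\} - 1 \le 0$, so $g$ is non-increasing, and since $\Theta_0(x) \ge \Theta_0(p)$ for every $x$, the supremum of $\Theta_1 - \Theta_0$ is attained at $p$ with value $g(\Theta_0(p)) = \hat{\Theta} - \Theta_0(p) = \hat{\Theta} - \inf_X \Theta_0$ (using (iii) at $p$).

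I do not expect any genuine obstacle; the only points needing care are that the $2\delta$-separation threshold in property (c) must be matched against the hypotheses of (iii)--(iv), which is exactly why those hypotheses are stated with the gap $\Theta_0 + \delta \le \hat{\Theta} - \delta$ rather than a bare $\delta$, and that the inequality $\delta \le \delta_1$ coming from $\delta = \min\{\delta_0, \delta_1\}$ is what forces $\hat{\Theta}$ to sit far enough above $\inf_X \Theta_0$ for $\Theta_1$ to equal $\hat{\Theta}$ near $p$; this last fact is the linchpin that makes the closed-form expressions in (v) and (vi) come out clean.
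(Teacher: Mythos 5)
Your proof is correct. For items (i)--(v) you follow essentially the same route as the paper: (i)--(iv) are direct consequences of the standard properties of the regularized maximum, and for (v) you locate the infimum at $p$ by the same two-sided inequality (the paper phrases it as a circular chain of inequalities, you phrase it as a pointwise lower bound plus an equality at $p$, but these are the same argument).

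For (vi), however, you take a genuinely different and in my view cleaner route. The paper decomposes $X$ into three sets according to the size of $\hat{\Theta} - \Theta_0$: the region where (iii) applies (so $\Theta_1 - \Theta_0 = \hat{\Theta} - \Theta_0$, maximized at $p$ with value $100\delta_1$), the region where (iv) applies (so the difference vanishes), and the transition band $\{|\Theta_0 - \hat{\Theta}| < 2\delta\}$ (where the crude bound $\Theta_1 - \Theta_0 \leq 3\delta$ suffices since $3\delta < 100\delta \leq 100\delta_1$). Your argument instead observes that $x\mapsto \Theta_1(x) - \Theta_0(x)$ factors as $g\circ\Theta_0$ with $g(s) = \widetilde{\max}_{\delta}\{\hat{\Theta},s\} - s$, and that translation invariance plus monotonicity of $\widetilde{\max}_{\delta}$ force $g' = \partial_2\widetilde{\max}_{\delta} - 1 \in [-1,0]$, so $g$ is non-increasing; since $\Theta_0$ attains its minimum at $p$, the difference attains its maximum there. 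This replaces a three-case comparison of constants by a single monotonicity observation, and it makes clear that the specific factor $100$ plays no role in (vi) -- it is only needed in (iii) to ensure $p$ sits strictly inside the region where the regularized max collapses to $\hat{\Theta}$. Both approaches are valid; yours is the more structural.
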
 
\begin{proof}
Statements ${\it (i)-(iv)}$ are just the properties of the regularized maximum, \cite[Chapter 1, Lemma 5.18]{DemB}.  We prove ${\it (v)}$.  From our choice of $\delta$, and the definition of $\Theta_1$ we have $\Theta_1(p) = \hat{\Theta}$.  Thus
\[
\begin{aligned}
(1-t)\Theta_0(p) + t\hat{\Theta} &= (1-t)\Theta_0(p) + t\Theta_1(p) \\
&\geq \inf_{X} [(1-t)\Theta_0 + t\Theta_1] \\
&\geq (1-t)\inf_X\Theta_0 +t\inf_X \Theta_1 \\
&= (1-t)\Theta_0(p) +t\hat{\Theta},
\end{aligned}
\]
establishing the fifth point.  For ${\it (vi)}$, we first consider the set $U_1 := \{x \in X : \Theta_0+\delta \leq \hat{\Theta}-\delta\}$.  On this set we have $\Theta_1 -\Theta_0 = \hat{\Theta} -\Theta_0$ by property ${\it (iii)}$.  This difference is maximized at the point $p \in U_1$, where we have
\[
\Theta_1(p) -\Theta_0(p) = \hat{\Theta} -\Theta_0(p) = 100\delta_1 \geq 100\delta.
\]
Now consider the set $U_2 := \{x \in X : \hat{\Theta}+\delta \leq \Theta_0-\delta\}$.  On this set we have $\Theta_1 - \Theta_0 \equiv 0$ by ${\it (iv)}$.  Finally, we consider the set $U_{3} = \{x \in X : |\Theta_0 -\hat{\Theta}|<2\delta\}$.  On $U_3$ we have
\[
\Theta_1 -\Theta_0 \leq \max\{\Theta_0 ,\Hat{\Theta}\} +\delta -\Theta_0 \leq 3\delta < 100\delta,
\]
and the lemma follows.
\end{proof}

We use the function $\Theta_1$ as the first target for the method of continuity. 

\begin{prop}\label{prop: MOC1}
There exists a smooth function $u_1:X \rightarrow \mathbb{R}$, and a constant $b_1<0$ such that
\[
\Theta_{\alpha}(\omega +\ddb u_1) = \Theta_1 +b_1, \qquad \text{ and} \qquad \Theta_1 +b_1 > (n-2)\frac{\pi}{2}.
\]
\end{prop}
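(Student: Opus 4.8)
The plan is to prove Proposition~\ref{prop: MOC1} by the method of continuity, running from $\chi$ towards $\Theta_1$: for $t\in[0,1]$ we consider
\[
\Theta_{\alpha}(\chi + \ddb u_t) = (1-t)\Theta_0 + t\Theta_1 + c_t,
\]
normalized by $\sup_X u_t = 0$ so that $(u_t,c_t)$ is unique. At $t=0$ we take $(u_0,c_0)=(0,0)$, since $\Theta_{\alpha}(\chi)=\Theta_0$. Let $I\subset[0,1]$ be the set of $t$ for which a smooth solution exists; then $0\in I$, and $I$ is open by Lemma~\ref{lem: openness}. It remains to show $I$ is closed and to identify the endpoint $t=1$.

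For closedness I would extract three structural facts, uniform in $t\in I$. \emph{(a) The constant $c_t$ satisfies $c_t\le 0$ and is bounded below.} At a maximum point of $u_t$ we have $\ddb u_t\le 0$, hence $\Theta_{\alpha}(\chi+\ddb u_t)\le\Theta_{\alpha}(\chi)=\Theta_0$ there; since $\Theta_1\ge\Theta_0$ by Lemma~\ref{lem: key props MOC}(ii), this forces $c_t\le 0$. Running the same argument at a minimum of $u_t$ and using Lemma~\ref{lem: key props MOC}(vi) gives $c_t\ge -t(\hat\Theta-\Theta_0(p))\ge -(\hat\Theta-\Theta_0(p))$. \emph{(b) $\chi$ remains a $\mathcal{C}$-subsolution for the phase $H_t := (1-t)\Theta_0 + t\Theta_1 + c_t$.} Since $c_t\le 0$ and $\Theta_1\le\max\{\Theta_0,\hat\Theta\}+\delta\le\max\{\Theta_0,\hat\Theta\}+\delta_0$ (Lemma~\ref{lem: key props MOC}(ii), together with $\delta\le\delta_0$), we get $H_t-\frac{\pi}{2}\le\max\{\Theta_0,\hat\Theta\}+\delta_0-\frac{\pi}{2}$ pointwise; comparing this with the defining inequality $\sum_{i\ne j}\arctan(\mu_i)>\max\{\Theta_0,\hat\Theta\}+100\delta_0-\frac{\pi}{2}$ for all $j$, Lemma~\ref{lem: sub sol def} shows $\chi$ is a $\mathcal{C}$-subsolution for $H_t$. \emph{(c) $H_t$ is uniformly supercritical.} Using $c_t\ge -t(\hat\Theta-\Theta_0(p))$, $\Theta_1\ge\hat\Theta$, and $\Theta_0\ge\Theta_0(p)=\inf_X\Theta_0$, one computes for every $x$
\[
H_t(x)\ge (1-t)\Theta_0(x)+t\big(\Theta_1(x)-\hat\Theta\big)+t\Theta_0(p)\ge (1-t)\Theta_0(x)+t\Theta_0(p)\ge \inf_X\Theta_0,
\]
so $H_t\ge \inf_X\Theta_0=:(n-2)\frac{\pi}{2}+\epsilon_0$ with $\epsilon_0>0$ fixed (here $\Theta_0>(n-2)\frac{\pi}{2}$ is the standing hypothesis of Theorem~\ref{thm: existence thm}), while $H_t\le\max\{\sup_X\Theta_0,\hat\Theta\}+\delta<n\frac{\pi}{2}$, possibly after replacing $\delta$ by a smaller value (which leaves Lemma~\ref{lem: key props MOC} intact).

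With (a)--(c) in hand, Proposition~\ref{prop: C0 est} (applied with $\chi$ as background and $0$ as the $\mathcal{C}$-subsolution) gives a uniform bound on $\mathrm{osc}_X u_t$; since by (a) the family $\{H_t\}$ is bounded in $C^{\infty}$, Theorem~\ref{thm: est thm} then gives a uniform $C^{2,\beta}$ bound on $u_t$, and Schauder bootstrapping upgrades this to uniform $C^{\infty}$ bounds. Hence $I$ is closed, $I=[0,1]$, and there is a smooth $u_1$ with $\Theta_{\alpha}(\chi+\ddb u_1)=\Theta_1+b_1$ where $b_1:=c_1$; moreover $\Theta_1+b_1=H_1\ge\inf_X\Theta_0>(n-2)\frac{\pi}{2}$ by (c). It remains only to check $b_1<0$. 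Write $\Theta_{\alpha}(\chi+\ddb u_1)-\Theta_{\alpha}(\chi)=a^{j\bar k}\partial_j\partial_{\bar k}u_1$, where $a^{j\bar k}=\int_0^1\eta^{j\bar k}(\chi+s\ddb u_1)\,ds$ is positive definite and continuous by~\eqref{eq: derivtheta}. If $b_1\ge 0$, then $a^{j\bar k}\partial_j\partial_{\bar k}u_1=\Theta_1+b_1-\Theta_0\ge 0$ on $X$ since $\Theta_1\ge\Theta_0$; as $X$ is compact and connected, the strong maximum principle forces $u_1$ to be constant, whence $\Theta_0\equiv\Theta_1+b_1$. Evaluating at $p$, where $\Theta_1(p)=\hat\Theta$ by Lemma~\ref{lem: key props MOC}(iii) and $\Theta_0(p)=\inf_X\Theta_0$, gives $\inf_X\Theta_0=\hat\Theta+b_1\ge\hat\Theta$, contradicting $\inf_X\Theta_0<\hat\Theta$. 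Hence $b_1<0$.

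The main obstacle is not the analysis but the bookkeeping of the second paragraph: a priori, neither the $\mathcal{C}$-subsolution condition nor the supercritical-phase bound is preserved along an arbitrary continuity path, and it is precisely to arrange (b) and (c) that $\Theta_1$ was built from the regularized maximum with the tuned constants $\delta_0,\delta_1$. The observation $c_t\le 0$ is what converts these structural requirements into the pointwise inequalities above; once they hold, the heavy analytic input — the $C^{2,\beta}$ estimate — is supplied wholesale by Theorem~\ref{thm: est thm}.
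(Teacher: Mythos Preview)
Your argument is correct and follows the paper's proof essentially line for line: the same continuity path from $\Theta_0$ to $\Theta_1$, the same maximum/minimum principle bounds $-t\sup_X(\Theta_1-\Theta_0)\le c_t\le 0$, the same verification that $\chi$ remains a $\mathcal{C}$-subsolution and that the phase stays uniformly above $\inf_X\Theta_0>(n-2)\frac{\pi}{2}$, and the same appeal to Theorem~\ref{thm: est thm} plus bootstrapping. Your explicit strong-maximum-principle argument for the strict inequality $b_1<0$ and your check that $H_t<n\frac{\pi}{2}$ are details the paper leaves implicit; both are correct and worth including.
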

\begin{proof}
We use the method of continuity.  Consider the family of equations 
\begin{equation}\label{eq: first MOC}
\Theta_{\alpha}(\chi+\ddb u_t) = (1-t)\Theta_0 +t\Theta_1 +b_t.
\end{equation}
Define 
\[ 
I = \big\{t \in[0,1]: \exists\,\, (u_{t},b_t) \in C^{\infty}(X) \times \mathbb{R} \text{ solving } \eqref{eq: first MOC} \big\}.
\]
 Since $(0,0)$ is a solution at time $t=0$, we have that $I$ is non-empty.  By Lemma~\ref{lem: openness} the set $I$ is open.  It suffices to prove that $I$ is closed.  This will follow from the a priori estimates in Theorem~\ref{thm: est thm} together with a standard bootstrapping argument provided we can show
\begin{itemize}
\item $\chi$ is a subsolution of equation~\eqref{eq: first MOC} for all $t\in[0,1]$
\item $(1-t)\Theta_0 +t\Theta_1 +b_t > (n-2)\frac{\pi}{2}$ uniformly for $t\in[0,1]$.
\end{itemize}
In order to do each of these things, we must control the constant $b_t$.  First, we claim that $b_{t} \leq t\sup_{X}(\Theta_0-\Theta_1) \leq 0$.  To see this, choose $q\in X$ where $u_t$ achieves its maximum.  Then at $q$, ellipticity implies
\[
\Theta_{0}(q) \geq \Theta_{\alpha}(\chi +\ddb u_t)(q) = (1-t)\Theta_0(q) +t\Theta_1(q) +b_t.
\]
Rearranging this equation yields
\[
b_{t} \leq t\sup_{X}(\Theta_0-\Theta_1) \leq 0
\]
where the final inequality follows from the fact that $\Theta_1 \geq \Theta_0$ by construction.  It follows that for every $1\leq j\leq n$ there holds,
\begin{equation*}
\begin{aligned}
\sum_{i\ne j} \arctan(\mu_i) &> \max\{\Theta_0, \hat{\Theta}\} + 100\delta - \frac{\pi}{2}\\
&> \Theta_1 -\frac{\pi}{2}\\
& \geq (1-t)\Theta_0 +t\Theta_1 +b_t  -\frac{\pi}{2}
\end{aligned}
\end{equation*}
and so $\chi$ is a subsolution of equation~\eqref{eq: first MOC} for all $t\in [0,1]$, taking care of the first point.  To take care of the second point we look at a point $q \in X$ where $u_t$ achieves its minimum to find
\[
b_t \geq -t\sup_{X}(\Theta_1-\Theta_0).
\]
Combining this estimate with the results of Lemma~\ref{lem: key props MOC}, we have
\begin{equation}
\begin{aligned}
\inf_X\left[ (1-t)\Theta_0 +t\Theta_1 +b_t\right]  &= (1-t)\Theta_0(p) +t\Theta_1(p) +b_{t} \\
&= \Theta_0(p) +t(\Theta_1(p) -\Theta_0(p)) +b_t\\
&= \Theta_0(p) +t\sup_{X} (\Theta_1-\Theta_0) +b_t\\
&\geq \Theta_0(p) \\
&> (n-2)\frac{\pi}{2}.
\end{aligned}
\end{equation}
By Theorem~\ref{thm: est thm}, together with the usual Schauder estimates and bootstrapping argument we conclude that $I$ is closed. Proposition~\ref{prop: MOC1} follows.
\end{proof}

We now turn to the proof of the main theorem.  Let $\omega_1 = \chi +\ddb u_1$, where $u_1$ is the function from Proposition~\ref{prop: MOC1}.   We consider the method of continuity
\begin{equation}\label{eq: MOC 2}
\Theta_{\alpha}(\omega_1 +\ddb v_t) = (1-t)\Theta_1 + t\hat{\Theta} +c_t.
\end{equation}
Define\[
I = \{t \in[0,1]: \exists\,\, (v_{t},c_t) \in C^{\infty}(X) \times \mathbb{R} \text{ solving } \eqref{eq: MOC 2} \}.
\]
By Proposition~\ref{prop: MOC1} we have a solution a time $t=0$, with constant $c_0 = b_1$.  Thanks to Lemma~\ref{lem: openness}, the set $I$ is open, and so it suffices to prove $I$ is closed.  Again this will follow from the a priori estimates in Theorem~\ref{thm: est thm}, if we can show that
\begin{itemize}
\item $\chi$ is a subsolution along the whole method of continuity \eqref{eq: MOC 2}.
\item  $(1-t)\Theta_1 + t\hat{\Theta} +c_t > (n-2)\frac{\pi}{2}$ for all $t\in [0,1]$.
\end{itemize}
as in the proof of Proposition~\ref{prop: MOC1}, it suffices to control the constant $c_t$.  To control $c_t$ from above we observe that since $\Theta_1 \geq \hat{\Theta}$, the cohomological condition
\[
{\rm Arg}\int_{X} \sqrt{\frac{\det \eta_t}{\det \alpha}} e^{i\left((1-t)\Theta_1 + t\hat{\Theta} +c_t\right)} \alpha^{n} = \hat{\Theta}
\]
implies that $c_t \leq 0$ for all $t \in [0,1]$.  Arguing as in the proof of Proposition~\ref{prop: MOC1}, we conclude that $\chi$ is again a subsolution along the whole method of continuity.  Furthermore, if $p\in X$ is a point where $\Theta_0$ achieves its infimum, then Lemma~\ref{lem: key props MOC} part ${\it (iii)}$, combined with Proposition~\ref{prop: MOC1} implies
\[
\hat{\Theta}+b_{1} = \Theta_1(p) +b_1 > (n-2)\frac{\pi}{2},
\]
and so in particular, we have
\[
(1-t)[\Theta_1+b_1] + t[\hat{\Theta}+b_1] > (n-2)\frac{\pi}{2}.
\]
In order to show that $(1-t)\Theta_1 + t\hat{\Theta} +c_t > (n-2)\frac{\pi}{2}$ it suffices to show that $c_{t} \geq b_{1}$ for all $t$.  This is easy.  If the maximum of $v_{t}$ is achieved at the point $q\in X$, then we have
\[
\Theta_1(q) +b_{1} \leq (1-t)\Theta_1(q) + t\hat{\Theta} +c_t.
\]
or in other words,
\[
c_t \geq b_1 +t[\Theta_1(q) -\hat{\Theta}] \geq b_1
\]
since $\Theta_1 \geq \hat{\Theta}$.  As a result we can apply the a priori estimates in \ref{thm: est thm} uniformly in $t$ to conclude that $I$ is closed.  The higher regularity follows in the usual way from the Schauder estimates and bootstrapping.  Since we clearly have $c_1=0$ by the cohomological condition, Theorem~\ref{thm: existence thm} follows.

\begin{rk}
It is easy to establish the following weaker existence theorem using the parabolic flow introduced in \cite{JY}:  If $\hat{\Theta} >(n-1)\frac{\pi}{2}$, and $\chi \in \Omega$ is a subsolution with $\Theta_{\alpha}(\chi) > (n-1)\frac{\pi}{2}$, then the flow in \cite{JY} starting at $\chi$ converges smoothly to a solution of the deformed Hermitian-Yang-Mills equation. 
\end{rk}

\section{Subsolutions, Class conditions and Stability}\label{sec: stability}

In this section we briefly elaborate on the subsolution condition as well as pose some natural conjectures related to the existence of solutions to the deformed Hermitian-Yang-Mills equation.  The first step is to observe that the subsolution condition in Lemma~\ref{lem: sub sol def} is equivalent to a class condition, as we alluded to in Remark~\ref{rk: form type pos}.  Recall that $\Omega\in H^{1,1}(X,\mathbb{R})$ is a fixed cohomology class. We then have the following proposition.

\begin{prop}\label{prop: form type pos}
Let $\hat{\Theta} \in ((n-2)\frac{\pi}{2}, n\frac{\pi}{2})$ be the fixed constant defined in Section \ref{background}.  Then a $(1,1)$ form $\chi \in\Omega$ is a subsolution to equation~\eqref{eq: SLag type} if and only if
\begin{enumerate}
\item $\dim _{\mathbb{C}} X=n$ is even and
\begin{equation}\label{eq: sub sol form}
 -(i^n)\left( {\rm Im}(\alpha+ \sqrt{-1}\chi)^{n-1}+\cot(\hat{\Theta}) {\rm Re}(\alpha+\sqrt{-1}\chi)^{n-1}\right)>0
\end{equation}
\item $\dim _{\mathbb{C}} X=n$ is odd and
\[
i^{n-1}\left(\tan(\hat{\Theta}) {\rm Im}(\alpha+ \sqrt{-1}\chi)^{n-1}+{\rm Re}(\alpha+\sqrt{-1}\chi)^{n-1}\right)>0.
\]
\end{enumerate}
In each line, positivity is to be understood in the sense of $(n-1,n-1)$ forms.

\end{prop}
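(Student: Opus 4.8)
The plan is to reduce the statement to a pointwise linear–algebra computation and then compare with the characterization of subsolutions in Lemma~\ref{lem: sub sol def}. Fix $p\in X$. Since $\alpha$ is a positive $(1,1)$ form and $\chi$ is a real $(1,1)$ form, choose complex coordinates centered at $p$ in which $\alpha=\sqrt{-1}\sum_k dz^k\wedge d\bar z^k$ and $\chi=\sqrt{-1}\sum_k\mu_k\,dz^k\wedge d\bar z^k$, where $\mu_1,\dots,\mu_n$ are the eigenvalues of $\alpha^{-1}\chi$ at $p$. Expanding the $(n-1)$-st power gives
\[
(\alpha+\sqrt{-1}\chi)^{n-1}=(n-1)!\,2^{n-1}\sum_{j=1}^{n}\Big(\prod_{k\neq j}(1+\sqrt{-1}\mu_k)\Big)V_j,
\]
where $V_j:=\bigwedge_{k\neq j}dx^k\wedge dy^k$ is a strictly positive $(n-1,n-1)$ form. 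Writing $\prod_{k\neq j}(1+\sqrt{-1}\mu_k)=r_j e^{\sqrt{-1}\theta_j}$ with $r_j=\prod_{k\neq j}\sqrt{1+\mu_k^2}>0$ and $\theta_j:=\sum_{k\neq j}\arctan(\mu_k)$, I observe that for real constants $a,b$ the form $a\,{\rm Im}(\alpha+\sqrt{-1}\chi)^{n-1}+b\,{\rm Re}(\alpha+\sqrt{-1}\chi)^{n-1}$ is again diagonal in the basis $\{V_j\}$, with coefficients $(n-1)!\,2^{n-1}r_j\,(a\sin\theta_j+b\cos\theta_j)$; and a diagonal real $(n-1,n-1)$ form is positive exactly when all of its coefficients are positive.

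The next step is the sign bookkeeping that explains the factors $i^n$ and $i^{n-1}$. When $n$ is even, $\sin\theta_j+\cot(\hat\Theta)\cos\theta_j=(\sin\hat\Theta)^{-1}\cos(\hat\Theta-\theta_j)$, so the coefficient of $V_j$ in the form appearing in \eqref{eq: sub sol form} is $(n-1)!\,2^{n-1}r_j\cdot\big(-i^n/\sin\hat\Theta\big)\cos(\hat\Theta-\theta_j)$. Because $n$ is even, the endpoints $(n-2)\frac{\pi}{2}$ and $n\frac{\pi}{2}$ of the supercritical interval are consecutive integer multiples of $\pi$, i.e.\ consecutive zeros of $\sin$, so $\sin\hat\Theta$ has constant sign on the open interval, and a direct check shows this sign equals that of $-i^n$; hence $-i^n/\sin\hat\Theta>0$. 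The odd case is identical after replacing $\cot\hat\Theta$ by $\tan\hat\Theta$ and $\sin\hat\Theta$ by $\cos\hat\Theta$: now $(n-2)\frac{\pi}{2}$ and $n\frac{\pi}{2}$ are consecutive zeros of $\cos$, so $\cos\hat\Theta$ has constant sign on the open interval, matching that of $i^{n-1}$. In both cases we arrive at the clean pointwise equivalence: the $(n-1,n-1)$ form in part (1), resp.\ part (2), is positive at $p$ if and only if $\cos(\hat\Theta-\theta_j)>0$ for every $j$.

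Finally I would compare this with Lemma~\ref{lem: sub sol def} applied with $h\equiv\hat\Theta$, which says that $\chi$ is a subsolution if and only if $\theta_j>\hat\Theta-\frac{\pi}{2}$ for all $j$ at every point. One implication is immediate: if $\theta_j>\hat\Theta-\frac{\pi}{2}$, then since $\theta_j=\sum_{k\neq j}\arctan\mu_k<(n-1)\frac{\pi}{2}<\hat\Theta+\frac{\pi}{2}$ (using $\hat\Theta>(n-2)\frac{\pi}{2}$), we get $|\hat\Theta-\theta_j|<\frac{\pi}{2}$ and hence $\cos(\hat\Theta-\theta_j)>0$. For the converse, $\cos(\hat\Theta-\theta_j)>0$ together with $\hat\Theta-\theta_j>\hat\Theta-(n-1)\frac{\pi}{2}>-\frac{\pi}{2}$ shows that $\hat\Theta-\theta_j$ lies in some branch $\big(2k\pi-\frac{\pi}{2},\,2k\pi+\frac{\pi}{2}\big)$ with $k\geq0$, and one rules out $k\geq1$ using that the $\theta_j$ at a fixed point differ by less than $\pi$ (as $\theta_j-\theta_{j'}=\arctan\mu_{j'}-\arctan\mu_j$), together with Lemma~\ref{lem: arithmetic} and the normalization of $\hat\Theta$ from Section~\ref{background}; this forces $k=0$, i.e.\ $\theta_j>\hat\Theta-\frac{\pi}{2}$. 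I expect the reality/sign normalization in the middle step — verifying that ``positivity'' records $\cos(\hat\Theta-\theta_j)>0$ rather than its negative, which is exactly the purpose of inserting $i^n$ (resp.\ $i^{n-1}$) — to be the only genuinely delicate point; everything else is the routine simultaneous diagonalization above.
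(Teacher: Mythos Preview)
Your diagonalization and trigonometric reduction are correct, and in fact a bit cleaner than the paper's case analysis: writing the coefficient of each $V_j$ as a positive multiple of $\cos(\hat\Theta-\theta_j)$ handles all parities and the borderline case $\hat\Theta=(n-1)\frac{\pi}{2}$ uniformly. The forward implication (subsolution $\Rightarrow$ form positivity) is fine.

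The converse, however, has a real gap. Your sketch tries to rule out the branches $k\ge 1$ \emph{pointwise}, invoking Lemma~\ref{lem: arithmetic} and ``the normalization of $\hat\Theta$''. This cannot work as written. Lemma~\ref{lem: arithmetic} assumes $\Theta(\mu)\ge(n-2)\frac{\pi}{2}$, which is not known at an arbitrary point; and a purely local argument genuinely fails. For instance, take $n=4$, $\hat\Theta\in(3\pi/2,2\pi)$, and $\mu_1=\cdots=\mu_4=0$ at a point $p$. Then every $\theta_j=0$, so $\hat\Theta-\theta_j=\hat\Theta\in(3\pi/2,2\pi)$ lies in the $k=1$ branch and $\cos(\hat\Theta-\theta_j)>0$, yet $\theta_j=0<\hat\Theta-\frac{\pi}{2}$, so the subsolution inequality fails at $p$. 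Your observation that the $\theta_j$ at a fixed point differ by less than $\pi$ does show that $k$ is independent of $j$ at each point, but it says nothing about which $k$ occurs.

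What the paper actually uses for the converse is the global hypothesis $\chi\in\Omega$: since ${\rm Arg}\int_X(\alpha+\sqrt{-1}\chi)^n=\hat\Theta$, there is at least one point $x_0$ where $\Theta_\alpha(\chi)=\hat\Theta$, and there the subsolution inequality holds trivially (so $k=0$). One then runs an open--closed argument on the connected manifold $X$: the set where $\chi$ is a subsolution is open by definition, and closed because strict positivity of the $(n-1,n-1)$ form prevents any $\theta_j$ from hitting $\hat\Theta-\frac{\pi}{2}$. This continuity/connectivity step is the missing idea in your proposal; once you add it, your cosine reformulation gives a tidy proof.
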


\begin{proof}
We will prove the statement in the case $n\equiv0$ (mod $4)$, as all other cases are similar.  Suppose that $\chi$ is a subsolution in the sense of Lemma~\ref{lem: sub sol def}.   Since the statement is pointwise, it suffices to  fix a point $x_{0} \in X$, and coordinates so that $\alpha$ is the identity at $x_{0}$ and $\chi (x_{0})$ is diagonal with entries $\mu_{1}, \dots, \mu_{n}$.  By assumption, for every $1\leq j\leq n$ we have
\[
(n-1)\frac{\pi}{2} >\sum_{i\ne j} \arctan(\mu_{i}) > \hat{\Theta}-\frac{\pi}{2}.
\]
In other words
\begin{equation}\label{eq: Arg subsol}
(n-1)\frac{\pi}{2}>{\rm Arg}\left(\prod_{j\ne i} (1+\sqrt{-1} \mu_{j})\right) > \hat{\Theta}-\frac{\pi}{2},
\end{equation}
where again we have fixed the branch cut of ${\rm Arg}$ by setting it to be zero when $\mu_{1} = \cdots = \mu_n =0$.  If $\hat{\Theta} = (n-1)\frac{\pi}{2}$, then the fact that $n\equiv0$ (mod $4)$ along with ~\eqref{eq: Arg subsol} implies
\[
{\rm Im}\left(\prod_{j\ne i} (1+\sqrt{-1} \mu_{j})\right)<0.
\]
Since $\cot(\hat{\Theta})=0$ in this case, we obtain~\eqref{eq: sub sol form}.  Otherwise,~\eqref{eq: Arg subsol} implies
\[
\arctan\left( \frac{ {\rm Im} \prod_{i\ne j} (1+\sqrt{-1}\mu_{i})}{{\rm Re}\prod_{i\ne j} (1+\sqrt{-1}\mu_{i})}\right) > \hat{\Theta} -\frac{\pi}{2}- k\pi
\]
where on the right hand side, we choose $k\in \mathbb{Z}$ so that $\hat{\Theta} -\frac{\pi}{2}- k\pi \in (-\frac\pi2,0)\cup(0, \frac\pi2)$.  Since $\tan(\cdot)$ is increasing and non-zero on $(-\frac\pi2,\frac\pi2)$, we obtain
\[
\frac{ {\rm Im} \prod_{i\ne j} (1+\sqrt{-1}\mu_{i})}{{\rm Re}\prod_{i\ne j} (1+\sqrt{-1}\mu_{i})} > -\cot( \hat{\Theta}).
\]
Above we have used the elementary fact that $\tan(x-\pi/2) = -\cot(x)$ for $x\ne0$ (mod $\pi)$.  By~\eqref{eq: Arg subsol}, the complex number $\prod_{j\ne i} (1+\sqrt{-1} \mu_{j})$ has argument lying in the interval $((n-3)\frac{\pi}{2}, (n-1)\frac{\pi}{2})$.  Since $n\equiv0$ (mod $4)$, this implies that it has negative real part. As a result, we have
\[
 {\rm Im} \prod_{i\ne j} (1+\sqrt{-1}\mu_{i}) <-\cot( \hat{\Theta}){\rm Re}\prod_{i\ne j} (1+\sqrt{-1}\mu_{i}). 
\]
Since this holds for all $j$, we obtain that~\eqref{eq: sub sol form} holds in the sense of $(n-1,n-1)$ forms.  

The reverse implication holds by essentially the same argument.  Suppose that $\chi$ satisfies~\eqref{eq: sub sol form}.  Since $\chi\in\Omega$ we get
\[
{\rm Arg} \int_{X}  (\alpha +\sqrt{-1} \chi)^{n} = \hat{\Theta}.
\]
It follows that there exists a point $x_{0} \in X$ such that $\Theta_{\alpha}(\chi)= \hat{\Theta}$.  In particular, in a neighbourhood of $x_{0}$,  $\chi$ defines a subsolution in the sense of Lemma~\ref{lem: sub sol def}.  The set of points $U \subset X$ where $\chi$ defines a subsolution is thus open and non-empty.  On the other hand, it is also closed.  To see this assume we can find points $p_{j} \in U$ converging to $p$, and at $p$ there exists a $j$ such that
\[
\sum_{i\ne j} \arctan(\mu_{j}) = \hat{\Theta}- \frac{\pi}{2}.
\]
The above computation implies that, at $p$ the $(n-1,n-1)$ form
\[
- {\rm Im}(\alpha+ \sqrt{-1}\chi)^{n-1}-\cot(\hat{\theta}) {\rm Re}(\alpha+\sqrt{-1}\chi)^{n-1}
\]
is positive, but {\em not} strictly positive, which is a contradiction.  Since $X$ is connected, it follows that $\chi$ is a subsolution everywhere.
\end{proof}

Notice that if $\chi$ is a subsolution to the deformed Hermitian-Yang-Mills equation \eqref{eq: SLag type} in the sense of Lemma~\ref{lem: sub sol def}, then in fact we obtain a large set inequalities that the eigenvalues of $\chi$ with respect to $\alpha$ must satisfy.  Namely, at a point $x_{0} \in X$, and in coordinates so that $\alpha$ is the identity at $x_{0}$ and $\chi(x_{0})$ is diagonal with entries $\mu_{1}, \dots, \mu_{n}$, then for every choice of $\ell$ distinct indices $j_{1}, \dots, j_{\ell}$, and every $1\leq \ell \leq n-1$, we must have
\[
\sum_{i\notin\{j_1,\dots,j_n\}} \arctan(\mu_{i}) \geq \hat{\Theta} - \ell \frac{\pi}{2}.
\]
Of course, for any $\ell >1$, these inequalities are all implied by the definition of a subsolution, so we have not really gained anything new.  On the other hand, this observation suggests a cohomological obstruction to the existence of solutions for the deformed Hermitian-Yang-Mills equation.  In order to explain this, we first prove

\begin{lem}\label{lem: arg sub var}
A $(1,1)$ form $\chi \in [\omega]$ is a subsolution to the deformed Hermitian-Yang-Mills equation if and only if, for any $1\leq p \leq n-1$, and any non-zero, simple, positive $(n-p,n-p)$ form $\beta$, we have
\begin{equation}
{\rm Arg}\frac{ (\alpha+\sqrt{-1}\chi)^{p}\wedge \beta}{\alpha^{n}} > \hat{\Theta} - (n-p)\frac{\pi}{2}.
\end{equation}
\end{lem}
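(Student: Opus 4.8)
The statement is pointwise, so I would fix $x_0\in X$, work in $V:=T^{1,0}_{x_0}X$, and choose an $\alpha$-orthonormal basis in which $\alpha=\mathrm{Id}$ and $\chi=\mathrm{diag}(\mu_1,\dots,\mu_n)$ with $\mu_1\ge\cdots\ge\mu_n$. Recall from the discussion preceding the lemma that $\chi$ is a $\mathcal{C}$-subsolution if and only if for every $1\le\ell\le n-1$ and every $\ell$-element index set $S'$ one has $\sum_{i\notin S'}\arctan(\mu_i)>\hat\Theta-\ell\tfrac\pi2$; the case $\ell=1$ is the definition (via Lemma~\ref{lem: sub sol def}) and the cases $\ell\ge2$ follow from it. The plan is to show that for each fixed $p\in\{1,\dots,n-1\}$,
\[
\inf_{\beta}\ {\rm Arg}\,\frac{(\alpha+\sqrt{-1}\chi)^p\wedge\beta}{\alpha^n}=\min_{|S|=p}\sum_{i\in S}\arctan(\mu_i),
\]
the infimum over nonzero simple positive $(n-p,n-p)$ forms $\beta$ being attained; granting this, the family of conditions in the lemma reindexes ($S\mapsto S^c$, $\ell=n-p$) exactly to the list above.

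For the displayed identity I would first use the standard correspondence between nonzero simple positive $(n-p,n-p)$ forms and $p$-dimensional complex subspaces: given such a $\beta$, there is a $p$-dimensional $U\subset V$ and an $\alpha$-orthonormal basis $e_1,\dots,e_n$ with $U=\mathrm{span}(e_{n-p+1},\dots,e_n)$ such that $\beta=c\,\bigwedge_{j=1}^{n-p}(\sqrt{-1}\,\theta^j\wedge\bar\theta^j)$ with $c>0$ and $\{\theta^j\}$ the dual basis, and conversely every $p$-dimensional $U$ (in particular every coordinate subspace) arises this way. Expanding $(\alpha+\sqrt{-1}\chi)^p$ in this basis, only the directions $n-p+1,\dots,n$ survive after wedging with $\beta$, giving
\[
\frac{(\alpha+\sqrt{-1}\chi)^p\wedge\beta}{\alpha^n}=c'\,\det\!\big(\mathrm{Id}_p+\sqrt{-1}\,\chi|_U\big),\qquad c'>0,
\]
where $\chi|_U=P_U\chi P_U$ is the compression of $\chi$ to $U$, with eigenvalues $\nu_1\ge\cdots\ge\nu_p$. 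Since $\det(\mathrm{Id}_p+\sqrt{-1}\chi|_U)=\prod_{j}(1+\sqrt{-1}\nu_j)$ never vanishes, deforming $\chi$ to $0$ shows that the branch of ${\rm Arg}$ normalized by ${\rm Arg}(1)=0$ is the continuous one, so ${\rm Arg}\,\frac{(\alpha+\sqrt{-1}\chi)^p\wedge\beta}{\alpha^n}=\sum_{j=1}^p\arctan(\nu_j)$.

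To compute the infimum I would invoke the Cauchy interlacing (Poincar\'e separation) theorem: the eigenvalues of the compression $\chi|_U$ to a $p$-dimensional subspace satisfy $\mu_{n-p+j}\le\nu_j\le\mu_j$ for $1\le j\le p$. Since $\arctan$ is increasing this yields
\[
\sum_{j=1}^p\arctan(\nu_j)\ \ge\ \sum_{j=1}^p\arctan(\mu_{n-p+j})=\sum_{k=n-p+1}^n\arctan(\mu_k)=\min_{|S|=p}\sum_{i\in S}\arctan(\mu_i),
\]
with equality when $U$ is spanned by the eigenvectors for the $p$ smallest eigenvalues. This establishes the displayed identity and that the infimum is attained; hence the condition ``${\rm Arg}\,(\alpha+\sqrt{-1}\chi)^p\wedge\beta/\alpha^n>\hat\Theta-(n-p)\tfrac\pi2$ for all such $\beta$'' is equivalent to ``$\sum_{i\in S}\arctan(\mu_i)>\hat\Theta-(n-p)\tfrac\pi2$ for all $S$ with $|S|=p$''. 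Running over $p=1,\dots,n-1$ and reindexing, this is equivalent to the list recalled at the start: the $p=n-1$ case is the subsolution definition (necessity), while from the definition the implication $\sum_{i\ne j}\arctan(\mu_i)>\hat\Theta-\tfrac\pi2$ for all $j$ $\Longrightarrow$ $\sum_{i\in S}\arctan(\mu_i)>\hat\Theta-(n-p)\tfrac\pi2$ for $|S|=p$ follows by subtracting $\sum_{i\in S^c\setminus\{j\}}\arctan(\mu_i)<(n-p-1)\tfrac\pi2$ for a chosen $j\in S^c$ (sufficiency).

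The step I expect to be the main obstacle is the linear-algebra normalization behind the two displayed identities for $\beta$: identifying a simple positive $(n-p,n-p)$ form with a $p$-dimensional subspace, choosing a basis adapted simultaneously to $\alpha$ and to $\beta$ while keeping $c,c'$ positive, and above all pinning down the branch of ${\rm Arg}$ so that ${\rm Arg}=\sum_j\arctan(\nu_j)$ holds exactly rather than up to a multiple of $2\pi$. Once that is in place, the interlacing estimate and the combinatorial bookkeeping are routine.
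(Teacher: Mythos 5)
Your proof is correct, but it takes a genuinely different route from the paper. The paper works entirely in the eigenbasis of $\chi$: it expands $(\alpha+\sqrt{-1}\chi)^p$ into coordinate monomials $\prod_{j\in J}(1+\sqrt{-1}\mu_j)\,dz^J\wedge d\overline{z^J}$, decomposes an arbitrary simple positive $\beta$ as $(\sqrt{-1})^{n-p}\sum_J c_J\, dz^{J^c}\wedge d\overline{z^{J^c}} + \tilde\beta$ with $\tilde\beta$ killed upon wedging, notes that the $c_J$ are non-negative with at least one positive, and then concludes by observing that the wedge is a positive linear combination of complex numbers whose arguments all exceed $\hat\Theta-(n-p)\tfrac\pi2$ (the reverse implication being obtained by testing against the coordinate forms $dz^{J^c}\wedge d\overline{z^{J^c}}$). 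You instead identify $\beta$ with a $p$-dimensional subspace $U$, compute the wedge as a positive multiple of $\det(\mathrm{Id}_p+\sqrt{-1}\,\chi|_U)$, and invoke Cauchy interlacing to show that the infimum of the argument over all $\beta$ is attained at a coordinate subspace. Your argument is more structural: it exhibits the extremal $\beta$ and shows the infimum equals $\min_{|S|=p}\sum_{i\in S}\arctan(\mu_i)$ exactly, and it also handles the branch of $\mathrm{Arg}$ cleanly (each factor $1+\sqrt{-1}\nu_j$ is in the right half-plane), whereas the paper's positive-combination step implicitly needs the upper bound $\sum_{j\in J}\arctan(\mu_j)\le p\tfrac\pi2$ together with the supercritical assumption $\hat\Theta>(n-2)\tfrac\pi2$ to keep all the summands in a half-plane of width less than $\pi$. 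The paper's route is more elementary and avoids interlacing. Both are valid; the combinatorial reduction at the end (necessity via $p=n-1$, sufficiency by subtracting $\sum_{i\in S^c\setminus\{j\}}\arctan(\mu_i)<(n-p-1)\tfrac\pi2$) is the same.
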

\begin{proof}
The proof is a matter of linear algebra.  Recall that a smooth $(k,k)$ form $\beta$ defined on an open set is said to be a simple positive form if it can be written as
\[
\beta = (\sqrt{-1})^{k} \beta_1\wedge\overline{\beta_{1}} \wedge \beta_2\wedge\overline{\beta_{2}}\wedge \dots \wedge\beta_k\wedge\overline{\beta_{k}}
\]
for smooth $(1,0)$ forms $\beta_{j}$ \cite{Ko}.  Since the statement is pointwise, we again fix a point $x_{0} \in X$, and coordinates so that $\alpha$ is the identity at $x_{0}$ and $\chi(x_{0})$ is diagonal with entries $\mu_{1}, \dots, \mu_{n}$.  For any $p$, we can have
\[
(\alpha+\sqrt{-1}\chi)^{p} = (\sqrt{-1})^{p} p! \sum_{J} \prod_{j\in J} (1+\sqrt{-1} \mu_{j}) dz^{J} \wedge d\overline{z^{J}}
\]
where the sum is over ordered sets $J \subset \{1,\dots,n\}$ of cardinality $p$.  Suppose that $\chi$ is a subsolution. Then by the above remarks we know that, for any $J$ we have
\[
{\rm Arg}\prod_{j\in J} (1+\sqrt{-1} \mu_{j}) > \hat{\Theta}-(n-p)\frac{\pi}{2}.
\]
Let $\beta$ be any non-zero simple positive $(n-p,n-p)$ form.  Then we can write
\[
\beta = (\sqrt{-1})^{n-p} \sum_{J} c_{J} dz^{J^{c}}\wedge d\overline{z^{J^{c}}} + \tilde{\beta}
\]
for a smooth $(n-p,n-p)$ form $\tilde{\beta}$ satisfying $\tilde{\beta} \wedge dz^{J} \wedge d\overline{z^{J}}=0$ for all $J$.  Here again the sum is over ordered sets $J \subset \{1,\dots,n\}$ of cardinality $p$, and $J^{c}$ denotes the ordered complement of $J$.  The coefficients $c_{J}$ are necessarily real, non-negative, and at least one $c_{J}$ must be strictly positive since
\[
p!(\sqrt{-1})^n\sum_{J} c_{J} dz_{1}\wedge d\overline{z_1}\wedge\cdots\wedge dz_{n}\wedge d\overline{z_n} = \beta \wedge \alpha^p.
\]
The right hand side is positive and not identically zero, since $\beta$ is non-zero.  Thus we have
\[
 \frac{(\alpha+\sqrt{-1}\chi)^{p}\wedge \beta}{\alpha^{n}} = \sum_{J} c_{J}\left(\prod_{j\in J} (1+\sqrt{-1} \mu_{j})\right).
 \]
 The right hand side is a positive linear combination of complex numbers with arguments strictly larger than $\hat{\Theta}-(n-p)\frac{\pi}{2}$, and so
 \[
 {\rm Arg}\frac{ (\alpha+\sqrt{-1}\chi)^{p}\wedge \beta}{\alpha^{n}} > \hat{\Theta} - (n-p)\frac{\pi}{2}.
 \]
The reverse implication is trivial, by taking $\beta =(\sqrt{-1})^{n-p}dz^{J^{c}}\wedge d\overline{z^{J^{c}}}$ for every ordered set $J$ of cardinality $p$.

\end{proof}

The upshot of this linear algebra is the following proposition, which is essentially a corollary of Lemma~\ref{lem: arg sub var}
\begin{prop}\label{prop: stable nec}
For every subvariety $V \subset X$, define 
\begin{equation}
\Theta_{V} := {\rm Arg} \int_{V} (\alpha+\sqrt{-1}\omega)^{\dim V}.
\end{equation}
If there exists a solution to the deformed Hermitian-Yang-Mills equation, then for every proper subvariety $V\subset X$ we have
\[
\Theta_{V} > \Theta_{X} - (n-\dim V)\frac{\pi}{2}.
\]
\end{prop}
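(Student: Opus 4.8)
The plan is to derive this as a corollary of Lemma~\ref{lem: arg sub var} by slicing $V$ along its regular locus. First I would note that a \emph{solution} of the deformed Hermitian--Yang--Mills equation is in particular a $\mathcal{C}$-subsolution: if $\omega\in\Omega$ solves $\Theta_\alpha(\omega)=\hat\Theta$ and $\lambda_1\geq\cdots\geq\lambda_n$ are its eigenvalues relative to $\alpha$ at a point, then for each $j$ one has $\sum_{\ell\neq j}\arctan(\lambda_\ell)=\hat\Theta-\arctan(\lambda_j)>\hat\Theta-\tfrac{\pi}{2}$, which is exactly the criterion in Lemma~\ref{lem: sub sol def}. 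Hence Lemma~\ref{lem: arg sub var} applies with $\chi=\omega$, $p=\dim V$: for every non-zero simple positive $(n-p,n-p)$ form $\beta$,
\[
{\rm Arg}\,\frac{(\alpha+\sqrt{-1}\omega)^p\wedge\beta}{\alpha^n}>\hat\Theta-(n-p)\tfrac{\pi}{2}=:\theta_0 .
\]

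Next I would reduce the integral to a pointwise statement. We may assume $V$ is irreducible (otherwise sum over components, discarding those of dimension $<p$, on which $(\alpha+\sqrt{-1}\omega)^p$ restricts to zero) and replace $V$ by its smooth connected locus $V_{\rm reg}$, whose complement is $d{\rm vol}_\alpha$-null. At $x\in V_{\rm reg}$ the tangent space $T_xV\subset T_xX$ is a $p$-dimensional complex subspace; choosing $\alpha$-orthonormal coordinates at $x$ which diagonalize $\omega$, the restriction $(\alpha+\sqrt{-1}\omega)^p|_{T_xV}$ equals $f(x)$ times the $\alpha$-volume form of $T_xV$, where $f(x)=\prod_{j=1}^p(1+\sqrt{-1}\,\nu_j(x))$ and $\nu_1(x)\geq\cdots\geq\nu_p(x)$ are the eigenvalues of $\omega|_{T_xV}$ with respect to $\alpha|_{T_xV}$. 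Equivalently $f(x)=\big((\alpha+\sqrt{-1}\omega)^p\wedge\beta_x\big)/\alpha^n$ for a suitably normalized simple positive $(n-p,n-p)$ form $\beta_x$ built from an $\alpha$-orthonormal basis of the annihilator of $T_xV$, so the displayed inequality gives ${\rm Arg}\,f(x)>\theta_0$ for every $x\in V_{\rm reg}$, with ${\rm Arg}\,f=\sum_j\arctan(\nu_j)$ the continuous branch normalized by ${\rm Arg}(1)=0$; trivially ${\rm Arg}\,f(x)<p\tfrac{\pi}{2}$ as well.

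Finally, $\int_V(\alpha+\sqrt{-1}\omega)^p=\int_{V_{\rm reg}}f\,d{\rm vol}_\alpha$ is the integral of a bounded $\mathbb{C}$-valued function against a finite positive measure, with $f$ valued in the open angular sector $U=\{z\neq0:\theta_0<{\rm Arg}(z)<p\tfrac{\pi}{2}\}$. In the supercritical range $\hat\Theta>(n-2)\tfrac{\pi}{2}$ this sector has angular width $p\tfrac{\pi}{2}-\theta_0=n\tfrac{\pi}{2}-\hat\Theta<\pi$, so $U$ is an open \emph{convex} cone; the barycenter of the pushforward of $d{\rm vol}_\alpha|_{V_{\rm reg}}$ under $f$ then lies in $U$ (were it on $\partial U$, a supporting line would force the measure onto that line, contradicting that $f$ avoids $\partial U$). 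Hence $\int_V(\alpha+\sqrt{-1}\omega)^p\in U$, i.e. $\Theta_V>\theta_0=\Theta_X-(n-\dim V)\tfrac{\pi}{2}$, using $\Theta_X=\hat\Theta$.

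The step I expect to require the most care is this last one: one must confine the argument of the integrand to a sector of angular width $<\pi$, on which ${\rm Arg}$ is continuous and the region $\{{\rm Arg}>\theta_0\}$ is convex, before passing to the barycenter. Under supercriticality this is immediate from the crude bound ${\rm Arg}\,f<p\tfrac{\pi}{2}$; in general one would first improve the upper bound on ${\rm Arg}\,f(x)=\sum_j\arctan(\nu_j(x))$ using the eigenvalue interlacing $\nu_j(x)\leq\lambda_j(x)$ together with $\sum_i\arctan(\lambda_i(x))=\hat\Theta$, and then run the same convexity argument.
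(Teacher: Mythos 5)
Your proof is correct, and it follows the route the paper implies: the paper states Proposition~\ref{prop: stable nec} as being ``essentially a corollary of Lemma~\ref{lem: arg sub var}'' and gives no further argument, so what you have written is precisely the elaboration the authors leave to the reader. The three ingredients you supply are all sound and all necessary: (i) a solution $\omega$ of $\Theta_\alpha(\omega)=\hat\Theta$ is a $\mathcal{C}$-subsolution, since $\sum_{\ell\ne j}\arctan\lambda_\ell=\hat\Theta-\arctan\lambda_j>\hat\Theta-\tfrac\pi2$; (ii) at a smooth point $x\in V_{\rm reg}$ the quantity $\big((\alpha+\sqrt{-1}\omega)^{p}\wedge\beta_x\big)/\alpha^{n}$, with $\beta_x$ the simple positive $(n-p,n-p)$ form built from an $\alpha$-orthonormal frame for the conormal space, is a positive multiple of $\prod_{j=1}^p(1+\sqrt{-1}\,\nu_j(x))$, so Lemma~\ref{lem: arg sub var} gives the pointwise bound ${\rm Arg}\,f(x)>\hat\Theta-(n-p)\tfrac\pi2$ on $V_{\rm reg}$; and (iii) the barycenter of the pushforward measure lands strictly inside the open convex sector $U$. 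Step (iii) is exactly where the supercritical hypothesis is used — the sector $\{\theta_0<{\rm Arg}(z)<p\tfrac\pi2\}$ has opening $n\tfrac\pi2-\hat\Theta<\pi$, hence is convex, and your supporting-line argument rules out the barycenter sitting on $\partial U$ (and in particular rules out $\int_V(\alpha+\sqrt{-1}\omega)^p=0$, so $\Theta_V$ is well defined). You also correctly dispose of the singular locus and of lower-dimensional components, and note that $f$ is bounded by Cauchy interlacing against the (compactly bounded) ambient eigenvalues of $\alpha^{-1}\omega$. One minor remark: the statement of the proposition does not display the supercritical hypothesis, but the entire framework of Section 8 — in particular the definition of subsolution and Lemma~\ref{lem: arg sub var} — presupposes $\hat\Theta\in((n-2)\tfrac\pi2,n\tfrac\pi2)$, so your explicit reliance on it is consistent with the paper; your closing observation that a finer upper bound on ${\rm Arg}\,f$ would be needed outside this range is a correct diagnosis, though not something the paper pursues.
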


We can recast the condition in Proposition~\ref{prop: stable nec} in the following way.  For every subvariety $V \subset X$ define a complex number
\[
Z(V) := -\int_{X} e^{-\sqrt{-1}\alpha +\omega}
\]
where by convention we only integrate the term in the expansion of order $\dim V$.  Note that $Z(V)$ differs from the complex number $\int_V (\alpha+\sqrt{-1}\omega)^{\dim V}$ only by factors of $(\sqrt{-1})$.  When $[\omega] = c_{1}(L)$, this formula is equivalent to
\begin{equation}\label{eq: central charge}
Z(V) = -\int_{V}e^{-\sqrt{-1}\alpha}{\rm ch}(L).
\end{equation}
It is easy to check that if $\Theta_{X} \in ((n-2)\frac{\pi}{2}, n\frac{\pi}{2})$, then $Z(X)$ lies in the upper half plane.  Let us denote by ${\rm Arg}_{p.v.}$ the principal value of ${\rm Arg}$, valued in $(-\pi, \pi]$.  Then, in the notation of Proposition~\ref{prop: stable nec} we have $\Theta_V > \Theta_X -(n-\dim V) \frac{\pi}{2}$ implies
\[
{\rm Arg}_{p.v.}\,Z(V) > {\rm Arg}_{p.v.}\,Z(X).
\]
The reader can easily check that the converse also holds, provided we assume that $\Theta_V > (\dim V-2)\frac{\pi}{2}$.  The numbers $Z(V)$ appearing in~\eqref{eq: central charge} bear a resemblance to the various notions central charge appearing in stability conditions in several physical and mathematical theories.  For example, we refer the reader to the works of Douglas \cite{Doug, Doug1, Doug2}, Bridgeland \cite{Br}, and Thomas \cite{T} to name just a few.  We hope to further elucidate this observation in future work.

Additionally, the condition appearing in Proposition~\ref{prop: stable nec} is, at least heuristically, similar to the algebro-geometric stability notions appearing in other problems in complex geometry.  Perhaps most notably, the notion of Mumford-Takemoto stability pertaining to the existence of Hermitian-Einstein metrics on holomorphic vector bundles \cite{Don1, UY}, and the recent stability condition posed by Lejmi-Sz\'ekelyhidi for the convergence of the $J$-equation, and more generally existence of solutions to the inverse $\sigma_k$-equations \cite{LS}.  Let us briefly recount this conjecture in the setting of the $J$-equation.

\begin{conj}[\cite{LS}]\label{conj: J flow}
Let $(X,\alpha)$ be a K\"ahler manifold, and $[\omega]$ another K\"ahler class.  For every subvariety $V\subset X$ with $\dim V=p$ define
\[
c_{V} := \frac{p\int_{V}\omega^{\dim p-1}\wedge\alpha}{\int_{V}\omega^{p}}.
\]
Then there exists a solution to the $J$-equation if and only if $c_{X}>c_{V}$ for all proper subvarieties $V\subset X$.
\end{conj}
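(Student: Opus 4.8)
\emph{Overview of the approach.} The natural strategy mirrors the philosophy of this paper: solvability of the $J$-equation should be equivalent to the existence of a subsolution (here in the sense of Song-Weinkove \cite{SW}), and the existence of a subsolution should in turn be detected by the numerical condition $c_{X}>c_{V}$. The necessity direction --- that a solution forces $c_{X}>c_{V}$ for every proper subvariety $V$ --- is due to Lejmi-Sz\'ekelyhidi and is the easy half; it runs exactly parallel to Proposition~\ref{prop: stable nec}, with the Poincar\'e separation (Cauchy interlacing) inequality playing the role of Lemma~\ref{lem: arg sub var}. Indeed, a solution $\chi\in[\omega]$ of $c_{X}\chi^{n}=n\chi^{n-1}\wedge\alpha$ satisfies ${\rm tr}_{\chi}\alpha\equiv c_{X}$, and restricting the positive endomorphism $\chi^{-1}\alpha$ to the tangent space of a proper subvariety $V$ of dimension $p$ gives ${\rm tr}_{\chi|_{V}}(\alpha|_{V})<c_{X}$ pointwise (strictly, since $\chi^{-1}\alpha$ is nondegenerate), so integrating over $V$ yields $c_{V}<c_{X}$.

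\emph{The hard direction.} Assume now that $c_{V}<c_{X}$ for every proper subvariety $V$. By Song-Weinkove \cite{SW} the existence of a solution is equivalent to a purely class-theoretic statement: the existence of a K\"ahler metric $\chi\in[\omega]$ with $c_{X}\chi^{n-1}-(n-1)\chi^{n-2}\wedge\alpha>0$ in the sense of $(n-1,n-1)$-forms. I would attack this following the Demailly-P\u{a}un blueprint for Nakai-Moishezon-type criteria, as adapted to the $J$-equation by the first author and Sz\'ekelyhidi \cite{CS}. The plan is to fix $[\omega]$ and vary the second class: let $\mathcal{C}$ be the set of K\"ahler classes $[\alpha']$ for which the $J$-equation with data $([\omega],[\alpha'])$ is solvable. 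First I would check that $\mathcal{C}$ is nonempty (take $[\alpha']$ small relative to $[\omega]$) and open (implicit function theorem, as in Lemma~\ref{lem: openness}). The heart of the argument is the analysis of $\partial\mathcal{C}$: along a path $[\alpha_{t}]$ approaching the boundary one extracts a weak limit of the suitably normalized solutions $\chi_{t}$ as a positive closed current $T$, and the key point to establish is that $T$ must concentrate along a proper analytic subvariety --- so that, via Siu's theorem on the analyticity of Lelong super-level sets together with a mass-concentration estimate, one obtains a $V$ saturating the bound, $c_{V}=c_{X}$. Contrapositively, if no subvariety destabilizes, the boundary of $\mathcal{C}$ is never reached and a solution exists.

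\emph{The main obstacle.} Everything hinges on this boundary analysis. One needs a priori estimates for a one-parameter deformation of the $J$-equation (or of an auxiliary Monge-Amp\`ere-type equation) whose degeneration is controlled precisely enough that the limiting current $T$ is (i) nonzero, (ii) genuinely singular along an analytic subvariety rather than merely non-smooth, and (iii) cohomologically pinned down --- one needs an identity relating the mass of $T$ along $V$ to the intersection numbers $\int_{V}\chi^{p}$ and $\int_{V}\chi^{p-1}\wedge\alpha$. Steps (ii) and (iii) are the genuinely difficult points: in contrast to the $(1,1)$ Nakai-Moishezon theorem, the positivity demanded here is of an $(n-1,n-1)$-form of the special algebraic shape $c_{X}\chi^{n-1}-(n-1)\chi^{n-2}\wedge\alpha$, so the relevant cone of cohomology classes must first be understood through the eigenvalue description of \cite{LS,CS}. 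Carrying this out uniformly in the dimension $n$ is precisely what makes the conjecture hard; the surface case is already known, and the threefold case would be the natural first test.
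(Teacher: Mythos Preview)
The statement you are addressing is a \emph{conjecture}, not a theorem: the paper does not prove it. It is quoted from Lejmi--Sz\'ekelyhidi \cite{LS} and followed only by a survey of partial results (the surface case via Yau and Demailly--P\u{a}un, the torus case due to \cite{LS}, and the toric case due to \cite{CS}). There is therefore no ``paper's own proof'' to compare against.

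Your write-up is not a proof either, and to your credit you say so: the sections labeled \emph{The hard direction} and \emph{The main obstacle} are an outline of a strategy, and you explicitly identify where it breaks down --- the boundary analysis of the cone $\mathcal{C}$, in particular pinning the mass of the limiting current to a subvariety and tying that mass to the intersection numbers defining $c_V$. That is exactly the difficulty, and your outline does not resolve it; what you have written is a well-informed research plan rather than an argument. The necessity direction you sketch is correct and is indeed the content of \cite{LS}. For the sufficiency direction, the Demailly--P\u{a}un/Nakai--Moishezon heuristic you propose is the right intuition and is in fact the line along which the conjecture was later attacked (and, with a uniform rather than strict inequality, eventually proved) by G.~Chen, Datar--Pingali, and J.~Song; but none of that is in the present paper, and the steps you flag as ``genuinely difficult'' remain genuinely unproved in your proposal.
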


This conjecture is known to hold when $\dim X=2$, thanks to the third authors solution of the Calabi conjecture \cite{Yau} and work of Demailly-P\u aun \cite{DP}.  The conjecture also holds when $X$ is a complex torus, due to Lejmi-Sz\'ekelyhidi.  Recently, the first author and Sz\'ekelyhidi \cite{CS} have proven the conjecture in the case that $X$ is toric.  It is interesting to note that the stability condition in Conjecture~\ref{conj: J flow} arises from a modification of K-stability by considering certain special test configurations arising from deformation to the normal cone.  We expect that the stability type condition in Proposition~\ref{prop: stable nec} can be realized in a similar manner, a point which we will address in future work.  Finally, we note;

\begin{prop}\label{prop: conj dim 2}
If $\dim X=2$, then a solution to the deformed Hermitian-Yang-Mills equation exists if and only if, for every curve $C\subset X$ we have
\begin{equation}\label{eq: stab dim 2}
\Theta_{C} > \Theta_{X}-\frac{\pi}{2}.
\end{equation}
\end{prop}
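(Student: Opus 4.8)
The plan is to convert the analytic existence question into a cohomological one and then apply the Demailly--P\u{a}un characterization of the K\"ahler cone on a surface. Throughout I take the phase to be supercritical, $\hat{\Theta}=\Theta_X \in (0,\pi)$, which for $n=2$ is the same as $\Omega\cdot[\alpha]>0$. The necessity of \eqref{eq: stab dim 2} is immediate from Proposition~\ref{prop: stable nec}: the proper subvarieties of $X$ are points and curves; for a point $p$ one has $\Theta_p=0$, so the inequality $\Theta_p>\Theta_X-2\cdot\frac{\pi}{2}$ only asserts $\hat{\Theta}<\pi$, which holds, while for a curve $C$ it is precisely \eqref{eq: stab dim 2}.

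For sufficiency the key observation is that, in complex dimension two, Proposition~\ref{prop: form type pos} identifies the subsolution condition with a class condition: $\chi\in\Omega$ is a subsolution if and only if $\chi+\cot(\hat{\Theta})\,\alpha>0$, i.e.\ if and only if the class $\beta:=\Omega+\cot(\hat{\Theta})\,[\alpha]$ is a K\"ahler class. I would therefore prove the equivalence: $\beta$ is K\"ahler $\iff$ \eqref{eq: stab dim 2} holds. Granting this, if \eqref{eq: stab dim 2} holds then $\beta$ is K\"ahler, so a subsolution exists, and a genuine solution follows from Theorem~\ref{thm: existence thm} when $\hat{\Theta}\ge\frac{\pi}{2}$ and from \cite[Theorem 1.2]{JY} (which imposes no critical-phase hypothesis on the subsolution) when $0<\hat{\Theta}<\frac{\pi}{2}$. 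Conversely a solution $\omega$ is itself a subsolution, since $\sum_{\ell\ne j}\arctan(\lambda_\ell)=\hat{\Theta}-\arctan(\lambda_j)>\hat{\Theta}-\frac{\pi}{2}$, so its existence already forces $\beta$ to be K\"ahler; together with the equivalence this gives a second proof of necessity.

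It remains to establish the equivalence. Writing $Z_\Omega=p+\sqrt{-1}\,q$ with $p=\frac12([\alpha]^2-\Omega^2)$ and $q=\Omega\cdot[\alpha]>0$, so that $\cot(\hat{\Theta})=p/q$, a short computation using $\Omega^2+2p=[\alpha]^2$ gives $\beta^2=[\alpha]^2\,|Z_\Omega|^2/q^2>0$, and $\beta\cdot[\alpha]=q+\cot(\hat{\Theta})[\alpha]^2$, which the Hodge index theorem (in the form $(\Omega\cdot[\alpha])^2\ge[\alpha]^2\,\Omega^2$) shows to be strictly positive whether $\Omega^2\ge0$ or $\Omega^2<0$. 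Thus $\beta$ automatically has positive self-intersection and lies in the component of $\{\gamma\in H^{1,1}(X,\mathbb{R}):\gamma^2>0\}$ containing the K\"ahler cone, so by \cite{DP} the class $\beta$ is K\"ahler if and only if $\beta\cdot C>0$ for every irreducible curve $C$. Finally, writing $a=[\alpha]\cdot C>0$ and $b=\Omega\cdot C$, we have $\beta\cdot C=b+\frac{\cos\hat{\Theta}}{\sin\hat{\Theta}}\,a$; multiplying by $\sin\hat{\Theta}>0$ shows $\beta\cdot C>0\iff{\rm Re}\big(e^{-\sqrt{-1}\hat{\Theta}}(a+\sqrt{-1}\,b)\big)>0$, and since $\Theta_C={\rm Arg}(a+\sqrt{-1}\,b)\in(-\frac{\pi}{2},\frac{\pi}{2})$ and $\hat{\Theta}\in(0,\pi)$, this is equivalent to $\Theta_C>\hat{\Theta}-\frac{\pi}{2}$, i.e.\ to \eqref{eq: stab dim 2}.

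The main obstacle is the last step, and within it the point that the curve inequalities \eqref{eq: stab dim 2} are \emph{alone} sufficient --- that is, that $\beta^2>0$ and $\beta\cdot[\alpha]>0$ come for free. This rests on the Hodge index theorem together with the identity $\beta^2=[\alpha]^2\,|Z_\Omega|^2/q^2$; the passage from the angle $\Theta_C$ to the numerical positivity $\beta\cdot C>0$ is then elementary trigonometry. Invoking \cite[Theorem 1.2]{JY} disposes of the analytic input when $0<\hat{\Theta}<\frac{\pi}{2}$, where the subsolution constructed here need not itself have supercritical phase and so is not directly covered by Theorem~\ref{thm: existence thm}.
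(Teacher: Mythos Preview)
Your proof is correct and follows the same overall route as the paper: both reduce existence to the K\"ahlerness of $\beta=\Omega+\cot(\hat{\Theta})[\alpha]$ and then verify this via Demailly--P\u{a}un together with the curve inequalities. The paper's reduction is slightly different on the analytic side: rather than passing through the subsolution framework (Proposition~\ref{prop: form type pos} plus Theorem~\ref{thm: existence thm} and \cite[Theorem~1.2]{JY}), it quotes directly from \cite{JY} the observation that in dimension two the dHYM equation is a complex Monge--Amp\`ere equation for a form in $\beta$, so that by Yau's theorem a solution exists if and only if $\beta$ is K\"ahler; this avoids your case split on $\hat{\Theta}\gtrless\pi/2$. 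On the cohomological side, where you establish $\beta^2>0$ and $\beta\cdot[\alpha]>0$ via the identity $\beta^2=[\alpha]^2|Z_\Omega|^2/q^2$ and the Hodge index theorem, the paper instead runs the continuity family $\Omega_T=\beta+T[\alpha]$ down from $T\gg0$ and checks $\Omega_T^2\ge1$ and $\Omega_T\cdot C>0$ persist to $T=0$; this is the same connectedness argument in different clothing, and your direct computation is marginally cleaner. One small omission: you restrict at the outset to $\hat{\Theta}\in(0,\pi)$ without dispatching $\hat{\Theta}=0$ (where the equation is linear and always solvable, and \eqref{eq: stab dim 2} is vacuous) and $\hat{\Theta}<0$ (reduce via $\omega\mapsto-\omega$), which the paper handles in a sentence.
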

\begin{proof}
Let us assume that $\Theta_{X} >0$.  If $\Theta_{X}<0$, then we can replace $[\omega]$ with $[-\omega]$, and if $\Theta_{X}=0$, then the condition in~\eqref{eq: stab dim 2} is vacuous, and a solution always exists, as observed in \cite{JY}.  We can there for assume that $\Theta_{X} \in(0,\pi)$, and so
\begin{equation}\label{eq: dim 2 norm}
1-\int_{X}\omega^{2} = 2\cot(\Theta_{X})\int_{X}\alpha\wedge \omega.
\end{equation}
It was observed in \cite{JY} that a solution to the deformed Hermitian-Yang-Mills equation exists if and only if the class $[\cot(\Theta_{X})\alpha+\omega]$ is K\"ahler, thanks to the third authors solution of the Calabi conjecture \cite{Yau}.  Since $[\alpha]$ is Kahler, the class $[\Omega_{T}]:=[(T+\cot(\Theta_{X}))\alpha+\omega]$ is a K\"ahler class for $T \gg0$.  Suppose there exists a time $T \geq0$ where  $[\Omega_{T}]$ lies on the boundary of the K\"ahler cone-- that is, $[\Omega_{T}]$ is nef, but not K\"ahler.  First, we claim that $[\Omega_{T}]$ is big.  By \cite[Theorem 2.12]{DP} it suffices to check that $\int_{X}\Omega_{T}^{2} >0$.  We compute
\begin{equation*}
\begin{aligned}
\int_{X} \Omega_{T}^{2} &= (T+\cot(\Theta_{X}))^{2} + 2(T+\cot(\Theta_{X}))\int_{X}\alpha\wedge\omega + \int_{X}\omega^2\\
&=(T+\cot(\Theta_{X}))^{2} +1+ 2T\int_{X}\alpha\wedge \omega,
\end{aligned}
\end{equation*}
where we have used~\eqref{eq: dim 2 norm}.  Note that since $\Theta_{X} \in(0, \pi)$ we have
\[
2\int_{X}\alpha \wedge \omega = {\rm Im}(\alpha+\sqrt{-1}\omega)^{2} >0,
\]
and so the above computation implies
\[
\int_{X}\Omega_{T}^{2} \geq 1.
\]
Finally, by the main theorem of \cite{DP} (see also \cite{CT}), we can conclude that $[\Omega_{T}]$ is K\"ahler provided $\int_{C}\Omega_{T} > 0$ for any curve $C\subset X$.  Fix $C \subset X$.  Since $\Theta_{C} > \Theta_{X}-\pi/2$, we know $\Theta_{C}\in(-\frac{\pi}{2}, \frac{\pi}{2})$, and so the following equality makes sense;
\[
\tan(\Theta_{C}) \int_{C} \alpha = \int_{C} \omega.
\]
Because $\tan(\cdot)$ is defined an increasing on $(-\frac{\pi}{2}, \frac{\pi}{2})$, we have 
\[
\tan(\Theta_{C}) > \tan(\Theta_{X}-\frac{\pi}{2}) = -\cot(\Theta_{X}).
\] 
Furthermore, $\Theta_{C} \in (-\frac{\pi}{2}, \frac{\pi}{2})$ implies that 
\[
\int_{C}\alpha = {\rm Re}\int_{C}\alpha+\sqrt{-1}\omega >0
\]
and so we obtain
\[
\int_{C}\omega = \tan(\Theta_{C}) \int_{C} \alpha  \geq -\cot(\Theta_{X})\int_{C}\alpha.
\]
Since $T\geq 0$,
\[
\int_{C}\Omega_{T} = T\int_{C} \alpha + \int_{C}\cot(\Theta_{X})\alpha+\omega > T\int_{C}\alpha >0,
\]
and so $[\Omega_{T}]$ is K\"ahler as long as $T\geq 0$, and the proposition follows.
\end{proof}

We end by remarking that one could hope for a similar framework for the lower branches of the deformed Hermitian-Yang-Mills equation-- that is, when $\Theta_{X} \leq (n-2)\frac{\pi}{2}$.  However, due to the lack of convexity in the lower branches we expect that the deformed Hermitian-Yang-Mills equation with subcritical phase may be extremely poorly behaved from an analytic and algebraic stand point.  For example, in the real case Nadirashvili-Vl\u adu\c{t} \cite{NV} and Wang-Yuan \cite{WY1} have demonstrated the existence of $C^{1,\beta}$ viscosity solutions to the special Lagrangian equation with subcritical phase on a ball in $\mathbb{R}^{3}$ for $n\geq 3$ which are not $C^2$ in the interior.  Furthermore, Wang-Yuan \cite{WY1} have shown the existence of smooth solutions $\{u^{\epsilon}\}$ to the special Lagrangian equation with fixed, subcritical phase on a ball in $\mathbb{R}^{3}$ such that $\|Du^{\epsilon}\|_{L^{\infty}} <C$, but so that $|D^2u^{\epsilon}|(0)$ blows up as $\epsilon \rightarrow 0$.

\bigskip

\end{document}